\newcommand{\R}{\mathbb R}
\newcommand{\C}{\mathbb C}
\newcommand{\N}{\mathbb N}
\newcommand{\g}{\mathfrak{g}}
\newtheorem{thm}{Theorem}[section]
\newtheorem{cor}[thm]{Corollary}
\newtheorem{lem}[thm]{Lemma}
\newtheorem{prop}[thm]{Proposition}
\theoremstyle{definition}
\newtheorem{defn}[thm]{Definition}
\newtheorem{rem}[thm]{Remark}
\numberwithin{equation}{section}
\begin{document}

\title[Boundary behaviour of positive solutions]{Boundary behavior of  positive solutions of the heat equation on a stratified Lie group}
\author[J. Sarkar]{Jayanta Sarkar}
\address{Stat Math Unit, Indian Statistical Institute, 203 B. T. Road, Kolkata 700108, India}
\email{jayantasarkarmath@gmail.com}
\keywords{Stratified Lie groups; Heat equation on Carnot group; Fatou-type theorems; Parabolic convergence; Derivative of measures.}
\subjclass[2010]{43A80, 31B25, 35R03 (Primary); 28A15 44A35 (Secondary)}
\maketitle
\begin{abstract}
In this article, we are concerned with a certain type of boundary behavior of positive solutions of the heat equation on a stratified Lie group at a given boundary point. We prove that a necessary and sufficient condition for the existence of the parabolic limit of a positive solution $u$ at a point on the boundary is the existence of the strong derivative of the boundary measure of $u$ at that point. Moreover, the parabolic limit and the strong derivative are equal. We also construct an example of a positive measure on the Heisenberg group to show that the set of all points where strong derivative exists is strictly larger than the set of Lebesgue points of the measure.
\end{abstract}
\section{Introduction}
To motivate our study in this paper, we first consider the heat equation
\begin{equation}\label{euclideanheat}
\Delta u(x,t)=\frac{\partial}{\partial t}u(x,t),
\end{equation}
on the Euclidean upper half space $\R^{n+1}_+=\{(x,t)\mid x\in\R^n,t>0\}$, where $\Delta=\sum_{i=1}^{n}\frac{\partial^2}{\partial x_i^2}$ is the Laplace operator on $\R^n$.  The fundamental solution of the heat equation is known as the Gauss-Weierstrass kernel or the heat kernel of $\R^{n+1}_+$ and is given by
\begin{equation*}
W(x,t)=(4\pi t)^{-\frac{n}{2}}e^{-\frac{\|x\|^2}{4t}},\:(x,t)\in\R^{n+1}_+.
\end{equation*}
In this article, by a measure $\mu$ we will always mean a complex Borel measure or a signed Borel measure such that the total variation $|\mu|$ is locally finite, that is, $|\mu|(K)$ is finite for all compact sets $K$. If $\mu(E)$ is nonnegative for all Borel measurable sets $E$ then $\mu$ will be called a positive measure. Also, by a positive solution of some partial differential equation, we shall always mean a nonnegative solution. The Gauss-Weierstrass integral of a measure $\mu$ on $\R^n$ is given by the convolution
\begin{equation*}
W\mu(x,t)=\int_{\R^n}W(x-y,t)\:d\mu(y),\:\:\:\: x\in\R^n,\:\: t\in (0,\infty ),
\end{equation*}
whenever the above integral exists. It is known that if $W\mu(x_0,t_0)$ is finite at some point $(x_0,t_0)\in\R^{n+1}_+$, then $W\mu$ is well defined and is a solution of the heat equation in $\{(x,t): x\in\R^n, t\in (0,t_0)\}$ \cite[Theorem 4.4]{W1}. On the other hand, it is also known \cite[P.93-99]{W1} that if $u$ is a positive solution of the heat equation (\ref{euclideanheat}) in $\R^{n+1}_+$, then there exists a unique positive measure $\mu$ (known as the boundary measure of $u$) on $\R^n$ such that
\begin{equation*}
u(x,t)=\int_{\R^n}W(x-\xi,t)\:d\mu(\xi),\:\:\:\:x\in\R^n,\:\: t>0.
\end{equation*}  
Given a function $u:\R^{n+1}_+\to\C$, we say that $u$ has parabolic limit $L\in\C$, at $x_0\in\R^n$, if for each $\alpha>0$,
\begin{equation*}
\lim_{\substack{(x,t)\to(x_0,0)\\\|x-x_0\|^2<\alpha t}}u(x,t)=L.
\end{equation*}

In \cite{G}, Gehring initiated the study of Fatou-type theorems and their converse for solutions of the heat equation for $n=1$. This was motivated by an earlier work of Loomis \cite{L} regarding converse of Fatou theorem for Poisson integral of positive measures. Gehring proved that if $u$ is a positive solution of the heat equation in $\R^2_+$ with boundary measure $\mu$ then $u$ has parabolic limit $L$ at $x_0$ if and only if $F_{\mu}^{\prime}(x_0)=L$, where $F_{\mu}$ is the distribution function of $\mu$.

It is not seemingly clear how to interpret the derivative $F_{\mu}^{\prime}$ in higher dimensions. Nevertheless, one possible way to resolve this problem is to consider the strong derivative of measures. The notion of strong derivative was introduced by Ramey-Ullrich \cite{UR} which we recall: a measure $\mu$ on $\R^n$ is said to have strong derivative $L\in\C$, at $x_0\in\R^n$, if
\begin{equation*}
\lim_{r\to 0}\frac{\mu(x_0+rB)}{|rB|}=L,
\end{equation*}
holds for every open ball $B\subset\R^n$. Here, $rE=\{rx\mid x\in E\}$, $r\in(0,\infty)$, $E\subset\R^n$, and $|A|$ denotes the Lebesgue measure of a measurable set $A\subset\R^n$.

One can show from an example constructed by Shapiro \cite{Sh} that there are measures for which the set of all points where strong derivative exists is strictly larger than that of all Lebesgue points of the measure. The following is a higher dimensional analogue of Gehring's result.
\begin{thm}\label{thmparaeucl}
Suppose that $u$ is a positive solution of the heat equation (\ref{euclideanheat}) and that $x_0\in\R^n$, $L\in[0,\infty)$. If $\mu$ is the boundary measure of $u$ then the following statements are equivalent.
\begin{enumerate}
\item[i)]The strong derivative of $\mu$ at $x_0$ is equal to $L$.
\item[ii)]$u$ has parabolic limit $L$ at $x_0$.
\end{enumerate}
\end{thm}
The proof of Theorem \ref{thmparaeucl} can be found in \cite[Theorem 3.2]{Sar} (see also \cite{BC}) and it is inspired by the work of Ramey and Ullrich \cite[Theorem 2.3]{UR} on the nontangential convergence of positive harmonic functions in $\R^{n+1}_+$. It is worth pointing out that a recent result of Bar \cite[Theorem 4]{B} on generalization of Montel's theorem plays an important role in the proof of Theorem \ref{thmparaeucl} given in \cite{Sar}.

In this article, our aim is to prove an analogue of Theorem \ref{thmparaeucl} for positive solutions of more genereal partial differential equations. We will consider  positive solutions $u$ of the heat equation corresponding to a sub-Laplacian on a stratified Lie group and prove the equivalence between the parabolic convergence of $u$ to the boundary and the strong derivative of the boundary measure of $u$ (Theorem \ref{mainc}). We refer the reader to Definition \ref{impdefnc}, for the relevant definitions. One of the main difficulty in this setting is that we do not have any explicit expression of the fundamental solution or the heat kernel. However, we do have Gaussian estimates of the heat kernels (see Theorem \ref{fundamental}) and using this estimate we have been able to prove our results.

This paper is organised as follows: In Section 2, we will collect some basic information about stratified Lie groups and the heat equation on these groups. In Section 3, We construct an example of a positive measure on the Heisenberg group to show that the set of all points where strong derivative exists strictly conatains that of Lebesgue points of the measure. Proofs of a result about heat maximal functions, and other relevant Lemmas are given in Section 4. The statement and proof of the main theorem (Theorem \ref{mainc}) is given in the last section.

\section{Preliminaries about stratified Lie groups}
Stratified Lie groups (also known as Carnot groups) have been introduced by Folland \cite[P.162]{F}. The simplest nontrivial example of stratified group is the Heisenberg group $H^n$. We will discuss about them in the next section. A stratified Lie group $(G,\circ)$ is a connected, simply connected nilpotent Lie group whose Lie algebra $\mathfrak{g}$ admits a vector space decomposition
\begin{equation*}
\mathfrak{g}=V_1\oplus V_2\oplus\cdots\oplus V_l,
\end{equation*}
such that
\begin{equation*}
[V_1,V_j]=V_{j+1},\:\:1\leq j<l,\:\:\:\:\:\:\:\:[V_1,V_l]=0.
\end{equation*}
Here,
\begin{equation*}
[V_1,V_j]=\text{span}~\{[X,Y]\mid X\in V_1, Y\in V_j \}.
\end{equation*}
Therefore, $V_1$ generates $\mathfrak{g}$ as a Lie algebra. We say that $G$ is of step $l$ and has $\text{dim}\:V_1$ many generators. The Lie algebra $\mathfrak{g}$ is eqquiped with a cannonical family of dilations $\{\delta_r\}_{r>0}$ which are Lie algebra automophisms defined by \cite[P.5]{FS}
\begin{equation*}
\delta_r\left(\sum_{j=1}^{l}X_j\right)=\sum_{j=1}^{l}r^jX_j,\:\:X_j\in V_j.
\end{equation*}
Since $\mathfrak{g}$ is nilpotent, the exponential map $\exp:\mathfrak{g}\to G$ is a diffeomorphism, and the dilations $\delta_r$ lift via the exponential map to give a one-parameter group of automorphisms of $G$ which we still denote by $\delta_r$. We fix once and for all a bi-invariant measure $m$ on $G$ which is the push forward of the Lebesgue measure on $\mathfrak{g}$ via the exponential map. The bi-invariant measure $m$ on $G$ is, in fact, the Lebesgue measure of the underlying Euclidean space. We shall denote by
\begin{equation*}
Q=\sum_{j=1}^lj(\text{dim}\:V_j),
\end{equation*}
the homogeneous dimension of $G$ and by $0$ the identity element of $G$. The importance of homogeneous dimension stems from the following relation 
\begin{equation*}
m\left(\delta_r(E)\right)=r^Qm(E),
\end{equation*}
which holds for all measurable sets $E\subset G$ and $r>0$. A homogeneous norm on $G$ is a continuous function $d:G\to[0,\infty)$ satisfying the following
\begin{enumerate}
	\item[i)]$d$ is smooth on $G\setminus\{0\}$;
	\item[ii)] $d(\delta_r(x))=rd(x)$, for all $r>0,\:x\in G$;
	\item [iii)]$d(x^{-1})=d(x)$, for all $x\in G$;
	\item[iv)]$d(x)=0$ if and only if $x=0$.
\end{enumerate}
It is known \cite[P.8-10]{FS} that homogeneous norms always exist on stratified Lie groups and for any homogeneous norm $d$ on $G$ there exists a positive constant $C_d$ such that 
\begin{equation}\label{quasimorm}
d(x\circ y)\leq C_d\{d(x)+d(y)\},\;\:\:\:x\in G, y\in G.
\end{equation}
Moreover, any two homogeneous norms on $G$ are equivalent (see \cite[P.230]{BLU}): if $d_1$ and $d_2$ are two homogeneous norms on $G$ then there exists a positive constant $B$ such that 
\begin{equation*}
B^{-1}d_1(x)\leq d_2(x)\leq Bd_2(x),\:\:\:\text{for all}\:\:x\in G.
\end{equation*}
The homogeneous norm $d$ defines a left invariant quasi-metric on $G$ (again denoted by $d$) as follows:
\begin{equation*}
d(x,y)=d(x^{-1}\circ y),\:\:\:\: x\in G,y\in G.
\end{equation*}
One can then write from (\ref{quasimorm}) that
\begin{equation*}
d(x,y)\leq C_d\left(d(x,z)+d(z,y)\right),\:\:\:\text{for all}\:\:x,\:y,\:z\in G.
\end{equation*}
\begin{rem}\label{topology} (\cite[Proposition 3.5]{Le})
Every homogeneous norm on $G$ induces the Euclidean topology on $G$.
\end{rem}
\begin{rem}(\cite[Proposition 5.15.1]{BLU})
Let $d$ be a homogeneous norm on $G$. Then for every compact set $K\subset G$, there exists a positive constant $c_K$ such that
\begin{equation}\label{bilipschitz}
(c_K)^{-1}\|x-y\|\leq d(y^{-1}\circ x)\leq c_K\|x-y\|^{\frac{1}{l}},\:\:\:\:\text{for all}\:\:x,\:y\in K,
\end{equation}
where $l$ is the step of $G$ and $\|\cdot\|$ is the norm of the underlying Euclidean space.
\end{rem}
For $x\in G$ and $s>0$,  the $d$-ball centered at $x$ with radius $s$ is defined as
\begin{equation*}
B_d(x,s)=\{y\in G:d(x,y)<s\}.
\end{equation*}
It follows that $B_d(x,s)$ is the left translate by $x$ of the ball $B_d(0,s)$ which in turn, is the image under $\delta_s$ of the ball $B_d(0,1)$. 
\begin{rem}\label{compact}(\cite[Lemma 1.4]{FS})
If $B=B_d(x,s)$ is a $d$-ball then $\overline{B}=\{y\in G:d(x,y)\leq s\}$ is compact with respect to the Euclidean topology of $G$.
\end{rem} 

We identify $\mathfrak{g}$ as the Lie algebra of all left invariant vector fields on $G$ and fix once and for all a basis $\{X_1,X_2,\cdots,X_{N_1}\}$ for $V_1$, with $N_1=\text{dim}\:V_1$, which generates $\mathfrak{g}$ as a Lie algebra. The second order differential operator
\begin{equation*}
\mathcal{L}=\sum_{j=1}^{N_1}X_j^2,
\end{equation*}
is called a sub-Laplacian for $G$.
\begin{rem} (\cite[Theorem 2.2]{BLU1})
There exists a homogeneous norm $d_{\mathcal{L}}$ on $G$ such that $d_{\mathcal{L}}(\cdot )^{2-Q}$ is the fundamental solution of $\mathcal{L}$.
\end{rem}
A differential operator $D$ on $G$ is said to be homogeneous of degree $\lambda$, where $\lambda\in\C$, if
\begin{equation*}
D(f\circ\delta_r)=r^{\lambda}(Df)\circ\delta_r,
\end{equation*}
for all $f\in C_c^{\infty}(G)$, $r>0$. It is evident that $X\in \mathfrak{g}$ is homogeneous of degree $j$ if and only if $X\in V_j$, $1\leq j\leq k$ (see \cite[P.172]{F}). Hence, $\mathcal{L}$ is a left invariant second order differential operator on $G$ which is homogeneous of degree two.
The heat operator $\mathcal{H}$ associated to the sub-Laplacian $\mathcal{L}$ is the differential operator
\begin{equation*}
\mathcal{H}=\mathcal{L}-\frac{\partial}{\partial t}
\end{equation*}
on $G\times(0,\infty)$. Since $X_1,X_2,\cdots,X_{N_1}$ generates the whole $\g$ as an algebra, by a celebrated theorem of Hörmander \cite[Theorem 1.1]{H}, $\mathcal{L}$ and $\mathcal{H}$ are hypoelliptic on  $G$ and $G\times (0,\infty)$ respectively. Hypoellipticity of $\mathcal{H}$ plays an important role in the results we have proved.

As stated before, in this paper, we are interested in boundary behavior of positive solutions of the heat equation on stratified groups:
\begin{equation}\label{heateq}
\mathcal{H}u(x,t)=0,\:\:\:\:\:\:(x,t)\in G\times (0,\infty).
\end{equation}
We list down some properties of the fundamental solution of the heat equation (\ref{heateq}).
\begin{thm}\label{fundamental}
	The fundamental solution of $\mathcal{H}$ is given by
	\begin{equation*}
	\Gamma(x,t;\xi):=\Gamma(\xi^{-1}\circ x,t),\:\:\:\: x\in G,\:\xi\in G,\:t\in(0,\infty),
	\end{equation*}where $\Gamma$ is a smooth, strictly positive function on $G\times(0,\infty)$ satisfying the following properties:
	\begin{enumerate}
		\item[(i)] $\Gamma(x,t+\tau)=\int_{G}\Gamma(\xi^{-1}\circ x,t)\Gamma(\xi,\tau)\:dm(\xi)$,\: \:\:$x\in G,\:\:t\in(0,\infty),\:\:\tau\in(0,\infty)$.
		\item[(ii)] $\Gamma(x,t)=\Gamma(x^{-1},t)$,\:\:\:$(x,t)\in G\times(0,\infty)$.
		\item[(iii)] $\Gamma(\delta_r(x),r^2t)=r^{-Q}\Gamma(x,t)$,\:\:\:$(x,t)\in G\times(0,\infty)$,\:\:$r>0$.
		\item[(iv)] $\int_{G}\Gamma(x,t)\:dm(x)=1$,\:\:\:$t>0$.
		\item [(v)] There exists a positive constant $c_0$, depending only on $\mathcal{L}$, such that the following Gaussian estimates hold.
		\begin{equation}\label{gaussian}
		c_0^{-1}t^{-\frac{Q}{2}}\exp\left(-\frac{c_0d_{\mathcal{L}}(x)^2}{t}\right)\leq\Gamma(x,t)\leq c_0t^{-\frac{Q}{2}}\exp\left(-\frac{d_{\mathcal{L}}(x)^2}{c_0t}\right),
		\end{equation} for every $x\in G$ and $t>0$.
		\item[(vi)] Given any nonnegative integers $p$, $q$, there exists positive constants $c_1$, $c_{p,q}$ such that for every $i_1,\cdots,i_{p}\in\{1,\cdots,N_1\}$ we have 
		\begin{equation}\label{derivativeest}
		|X_{i_1}\cdots X_{i_p}(\partial_t)^q\Gamma(x,t)|\leq c_{p,q}t^{-\frac{Q+p+2q}{2}}\exp\left(-\frac{d_{\mathcal{L}}(x)^2}{c_1t}\right),
		\end{equation}
		for every $x\in G$ and $t>0$.
	\end{enumerate}
\end{thm}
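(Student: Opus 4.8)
The plan is to realize $\Gamma(\cdot,t)$ as the convolution kernel of the heat semigroup $H_t=e^{t\mathcal{L}}$ and then read off each property from the corresponding structural feature of the semigroup. First I would observe that $\mathcal{L}$, taken on $C_c^\infty(G)$, is symmetric and nonpositive on $L^2(G,m)$, since each $X_j$ is formally skew-adjoint for the bi-invariant measure $m$ and hence $X_j^2$ is formally self-adjoint; the associated Dirichlet form $\mathcal{E}(f,f)=\sum_{j=1}^{N_1}\|X_jf\|_{L^2}^2$ is Markovian. Consequently $\mathcal{L}$ is essentially self-adjoint and its closure generates, via the spectral theorem, a strongly continuous, self-adjoint, positivity-preserving contraction semigroup $H_t$. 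Left-invariance of $\mathcal{L}$ makes $H_t$ commute with left translations, so $H_t$ acts by convolution against a kernel that I write as $\Gamma(\xi^{-1}\circ x,t)$. Hypoellipticity of $\mathcal{H}=\mathcal{L}-\partial_t$ on $G\times(0,\infty)$, which holds by the H\"ormander condition \cite{H} because $X_1,\dots,X_{N_1}$ generate $\g$, forces $\Gamma$ to be smooth there; strict positivity then follows from Bony's strong minimum principle for subelliptic heat operators together with the fact that $\Gamma(\cdot,t)$ is nonnegative and not identically zero.

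The algebraic identities (i)--(iv) now fall out one at a time. Property (i) is exactly the semigroup law $H_{t+\tau}=H_tH_\tau$ rewritten at the level of convolution kernels. Property (ii) encodes the self-adjointness of $H_t$: for a real left-invariant convolution kernel, self-adjointness on $L^2(G,m)$ is equivalent to $\Gamma(x,t)=\Gamma(x^{-1},t)$. For property (iii) I would use that $\mathcal{L}$ is homogeneous of degree two, so that $(x,t)\mapsto u(\delta_r(x),r^2t)$ again solves $\mathcal{H}u=0$ whenever $u$ does; combining this dilation invariance with the scaling $m(\delta_r(E))=r^Qm(E)$ of the reference measure and the uniqueness of the fundamental solution pins down the transformation law $\Gamma(\delta_r(x),r^2t)=r^{-Q}\Gamma(x,t)$. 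Property (iv) is conservation of mass, $H_t\mathbf 1=\mathbf 1$; I would deduce $\int_G\Gamma(x,t)\,dm(x)=1$ either from stochastic completeness of the semigroup, which is automatic here because of the exact polynomial volume law $m(B_{d_{\mathcal{L}}}(0,s))=s^Q\,m(B_{d_{\mathcal{L}}}(0,1))$, or by a direct approximation argument built on (i) and positivity.

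The substantive part of the proof is the two-sided Gaussian bound (v), and I expect this to be the main obstacle, since it is precisely where the absence of an explicit kernel is felt and where the quantitative geometry of $d_{\mathcal{L}}$ must enter. My approach would be to invoke the general heat-kernel theory of Varopoulos--Saloff-Coste--Coulhon for spaces carrying volume doubling and a scaled $L^2$ Poincar\'e inequality. Both hypotheses are available on $G$: doubling is immediate from the exact volume law above, and the Poincar\'e inequality on $d_{\mathcal{L}}$-balls for the horizontal gradient $(X_1,\dots,X_{N_1})$ is a standard subelliptic estimate. From these I would first extract the on-diagonal bound $\Gamma(0,t)\asymp t^{-Q/2}$ (upper bound by a Nash inequality, lower bound by combining (i), (ii), (iv) with the volume law), and then generate the off-diagonal Gaussian factor: the upper bound by Davies' exponential-weight perturbation method, and the matching lower bound by a chaining argument feeding a parabolic Harnack/Moser inequality along a sequence of balls joining the relevant points.

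Finally, for the derivative estimates (vi) I would bootstrap from (v). Writing $X_{i_1}\cdots X_{i_p}(\partial_t)^q\Gamma(\cdot,t)$ as the kernel of the operator $X_{i_1}\cdots X_{i_p}(\partial_t)^qH_t$ and splitting $H_t=H_{t/2}H_{t/2}$ via (i), I would apply analytic-semigroup Cauchy estimates in time for the $\partial_t$ factors and subelliptic a priori estimates for the horizontal derivatives, each contributing a factor of order $t^{-1/2}$ by homogeneity, while the Gaussian spatial decay is inherited from the off-diagonal upper bound in (v). The exact exponent $t^{-(Q+p+2q)/2}$ is then fixed by differentiating the scaling identity (iii), which yields (\ref{derivativeest}) with constants depending only on $p$, $q$ and $\mathcal{L}$.
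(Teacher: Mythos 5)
The paper does not prove this theorem at all: it is quoted from the literature, with (i)--(iv) attributed to Folland--Stein \cite{FS} and the Gaussian and derivative estimates (v), (vi) to Bonfiglioli--Lanconelli--Uguzzoni \cite{BLU1}. Your outline is therefore not being measured against an argument in the paper, but as a reconstruction of the cited results it takes a genuinely different, more abstract route than those references: you build everything from the Dirichlet-form/semigroup picture and then invoke the volume-doubling plus Poincar\'e characterization of two-sided Gaussian bounds (Grigor'yan, Saloff-Coste), together with Davies' perturbation for the off-diagonal upper bound and Harnack chaining for the lower bound, whereas \cite{BLU1} obtains the uniform Gaussian estimates by more hands-on PDE arguments tailored to Carnot groups. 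Your closing observation that the exponent $t^{-(Q+p+2q)/2}$ in (vi) is forced by the scaling identity (iii), so that it suffices to prove the estimate at $t=1$, is exactly the mechanism used in the literature and is the right way to organize that part of the proof.

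That said, several steps in your sketch conceal real work and should not be presented as automatic. Symmetry and nonpositivity of $\mathcal{L}$ on $C_c^{\infty}(G)$ do not by themselves yield essential self-adjointness; you need either the Nelson--Stinespring theorem for sums of squares of left-invariant vector fields on a unimodular group, or you should simply pass to the Friedrichs extension, which is enough to construct $H_t$. The scaled $L^2$ Poincar\'e inequality for the horizontal gradient on metric balls is Jerison's theorem, not a routine subelliptic estimate. The general doubling-plus-Poincar\'e machinery produces Gaussian bounds in the Carnot--Carath\'eodory distance, so to land on the stated form (\ref{gaussian}) you must convert to $d_{\mathcal{L}}$ via the equivalence of homogeneous norms, which only costs constants in the exponents but needs to be said. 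Finally, in (vi) the assertion that the Gaussian spatial decay ``is inherited'' after applying $X_{i_1}\cdots X_{i_p}(\partial_t)^q$ is the one place where a genuine argument is missing: pointwise Gaussian bounds on a kernel do not automatically transfer to its derivatives, and one must run a localized parabolic regularity estimate (or a weighted semigroup argument) on a scale-$\sqrt{t}$ neighborhood to keep the exponential factor. None of these is a wrong turn, but each is a theorem you are importing rather than proving.
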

The proof of (i)-(iv) can be found in \cite[Proposition 1.68, Corollary 8.2]{FS} and the proofs of (v), (vi) are available in \cite[Theorem 5.1, Theorem 5.2, Theorem 5.3]{BLU1}. Property (v) plays an important role in our study and we will frequently
use it throughout this paper. We refer the reader to \cite[P.185-198]{FR} for other interesting properties of the heat kernel on a stratified Lie group.

For a measure $\mu$ on $G$, we define 
\begin{equation}\label{gammamu}
\Gamma\mu(x,t)=\int_{G}\Gamma(\xi^{-1}\circ x,t)\:d\mu(\xi),\:\:\:\:\:x\in G,\:\:t>0,
\end{equation} 
whenever the integral above exists. We define 
\begin{equation*}
\gamma(x):=\Gamma(x,1),\:\:\:\:\:x\in G.
\end{equation*}
Then by Therorem \ref{fundamental}, (iii) we have 
\begin{equation*}
\Gamma(x,t)=t^{-\frac{Q}{2}}\gamma\left(\delta_{\frac{1}{\sqrt{t}}}(x)\right)
\end{equation*}
For a function $\psi$ defined on $G$, we set for $t>0$,
\begin{equation*}
\psi_t(x)=t^{-Q}\psi\left(\delta_{\frac{1}{t}}(x)\right),\:\:\:\:\:x\in G.
\end{equation*}
Hence, we can rewrite (\ref{gammamu}) as follows.
\begin{equation}\label{gammaconv}
\Gamma\mu(x,t)=\mu\ast\gamma_{\sqrt{t}}(x),\:\:\:\:\:x\in G,\:\:t\in(0,\infty).
\end{equation}
where $\ast$ is the convolution on the group $G$. From now onwards, unless mentioned explicitly, we will always write $B(x,s)$ instead of $B_{d_{\mathcal{L}}}(x,s)$ to denote a ball centered at $x$ and radius $s>0$, with respect to the homogeneous norm $d_{\mathcal{L}}$. We recall that there exists a constant $C_\mathcal{L}\geq 1$, such that
\begin{equation*}
d_{\mathcal{L}}(y\circ z)\leq C_{\mathcal{L}}\left(d_{\mathcal{L}}(y)+d_{\mathcal{L}}(z)\right),\:\:y,\:z\in G.
\end{equation*}
Using this we get the following simple inequality.
\begin{equation}\label{revtri}
d_{\mathcal{L}}(y,z)\geq\frac{1}{C_{\mathcal{L}}}d_{\mathcal{L}}(u,z)-d_{\mathcal{L}}(u,y),\:\:u,\:y,\:z\in G.
\end{equation} 
We next prove a simple lemma regarding convolution on $G$. To do this, we take a function $\phi:G\rightarrow (0,\infty)$ such that
\begin{equation}\label{lradial}
\phi(x_1)=\phi(x_2),\:\:\text{whenever}\:\:d_{\mathcal{L}}(x_1)=d_{\mathcal{L}}(x_2);
\end{equation}
\begin{equation}\label{ldec}
\phi(x_1)\leq\phi(x_2),\:\:\text{whenever}\:\:d_{\mathcal{L}}(x_1)\geq d_{\mathcal{L}}(x_2).
\end{equation}
Following \cite[P.247]{BLU}, any function satisfying (\ref{lradial}) (resp.(\ref{ldec})) will be called $\mathcal{L}$-radial (resp. $\mathcal{L}$-radially decreasing) function. If $\phi$ is $\mathcal{L}$-radial function on $G$, for the sake of simplicity, we shall often write $\phi(r)=\phi(x)$ whenever $r=d_{\mathcal{L}}(x)$.
\begin{prop}\label{finiteconv}
Suppose that $\mu$ is a measure on $G$ and that $\phi$ is as above. Then finiteness of $|\mu|\ast \phi_{t_0}(x_0)$ for some $(x,t)\in G\times(0,\infty)$ implies the finiteness of $|\mu|\ast\phi_t(x)$ for all $(x,t)\in G\times (0,t_0/C_\mathcal{L})$.
\end{prop}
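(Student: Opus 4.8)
The plan is to reduce the statement to a pointwise comparison of the two integrands and then split the convolution integral into a bounded near part and a tail part that is dominated by the finite quantity $|\mu|\ast\phi_{t_0}(x_0)$. Two preliminary observations drive everything. First, since $d_{\mathcal{L}}(x)\geq 0=d_{\mathcal{L}}(0)$ for every $x$, the $\mathcal{L}$-radially decreasing hypothesis (\ref{ldec}) gives $\phi(x)\leq\phi(0)<\infty$, so $\phi$ is bounded. Second, writing $r=d_{\mathcal{L}}(x,y)=d_{\mathcal{L}}(y^{-1}\circ x)$ and using $d_{\mathcal{L}}(\delta_{1/s}(z))=d_{\mathcal{L}}(z)/s$ together with $\mathcal{L}$-radiality (\ref{lradial}), I would record the clean formula
\[
\phi_s(y^{-1}\circ x)=s^{-Q}\,\phi\!\left(\tfrac{d_{\mathcal{L}}(x,y)}{s}\right),
\]
where on the right $\phi$ is viewed as a nonincreasing function on $[0,\infty)$.

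Fix $(x,t)$ with $t<t_0/C_{\mathcal{L}}$ and put $a=d_{\mathcal{L}}(x,x_0)$. I would split $G$ into a near region $B(x,R)$ and its complement, with $R>0$ to be chosen. On the near region the crude bound $\phi_t(y^{-1}\circ x)\leq t^{-Q}\phi(0)$, the local finiteness of $|\mu|$, and the compactness of $\overline{B(x,R)}$ (Remark \ref{compact}) together give
\[
\int_{B(x,R)}\phi_t(y^{-1}\circ x)\,d|\mu|(y)\leq t^{-Q}\phi(0)\,|\mu|\big(B(x,R)\big)<\infty,
\]
so the near part is harmless.

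The heart of the matter is the tail, and this is the step I would treat most carefully. For $y$ with $r=d_{\mathcal{L}}(x,y)\geq R$, set $r_0=d_{\mathcal{L}}(x_0,y)$. The quasi-triangle inequality yields $r_0\leq C_{\mathcal{L}}r+C_{\mathcal{L}}a$, hence $r_0/t_0\leq C_{\mathcal{L}}r/t_0+C_{\mathcal{L}}a/t_0$. Because $t<t_0/C_{\mathcal{L}}$, the quantity $\varepsilon:=(t_0-C_{\mathcal{L}}t)/(t\,t_0)$ is strictly positive and $r/t-C_{\mathcal{L}}r/t_0=\varepsilon r$; choosing $R:=C_{\mathcal{L}}a/(t_0\varepsilon)$ forces $\varepsilon r\geq C_{\mathcal{L}}a/t_0$ throughout the tail, so that $r_0/t_0\leq r/t$. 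Since $\phi$ is nonincreasing in its radial argument, this produces the pointwise domination
\[
\phi_t(y^{-1}\circ x)=t^{-Q}\phi\!\left(\tfrac{r}{t}\right)\leq t^{-Q}\phi\!\left(\tfrac{r_0}{t_0}\right)=\left(\tfrac{t_0}{t}\right)^{Q}\phi_{t_0}(y^{-1}\circ x_0)
\]
on $G\setminus B(x,R)$. Integrating against $|\mu|$ and invoking the hypothesis bounds the tail by $(t_0/t)^Q\,|\mu|\ast\phi_{t_0}(x_0)<\infty$, and adding the two pieces completes the proof.

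The only genuinely delicate point is the bookkeeping with the quasi-metric constant: it is precisely the hypothesis $t<t_0/C_{\mathcal{L}}$ that makes the coefficient $C_{\mathcal{L}}/t_0$ of $r$ strictly smaller than $1/t$, so that the linear-in-$r$ terms dominate the fixed error $C_{\mathcal{L}}a/t_0$ for all sufficiently large $r$. Without the factor $C_{\mathcal{L}}$ coming from the failure of the ordinary triangle inequality, the comparison $r_0/t_0\leq r/t$ would break down, which is exactly why the admissible range of $t$ is $(0,t_0/C_{\mathcal{L}})$ rather than $(0,t_0)$. Everything else—boundedness of $\phi$, local finiteness of $|\mu|$, and compactness of closed $d_{\mathcal{L}}$-balls—is routine.
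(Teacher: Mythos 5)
Your proof is correct and follows essentially the same strategy as the paper's: a near/far decomposition in which the near part is controlled by $\phi(0)$ together with local finiteness of $|\mu|$ on compact closed $d_{\mathcal{L}}$-balls, and the far part is dominated pointwise by $(t_0/t)^Q\phi_{t_0}(\cdot^{-1}\circ x_0)$ via the quasi-triangle inequality and the $\mathcal{L}$-radially decreasing property, with the hypothesis $t<t_0/C_{\mathcal{L}}$ entering exactly as you describe. The only cosmetic difference is that the paper centers its near ball at $x_0$ (with radius $\alpha C_{\mathcal{L}}d_{\mathcal{L}}(x,x_0)$, $\alpha=t_0/(t_0-tC_{\mathcal{L}})$) rather than at $x$, which changes nothing of substance.
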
  
\begin{proof}
We take $(x,t)\in G\times(0,t_0/C_\mathcal{L})$ and set $\alpha=\frac{t_0}{t_0-tC_\mathcal{L}}$. We write
\begin{eqnarray}\label{intsplit}
|\mu|\ast\phi_t(x)&=&t^{-Q}\int_{\{\xi\in G:\:d_{\mathcal{L}}(\xi,x_0)<\alpha C_\mathcal{L} d_{\mathcal{L}}(x,x_0)\}}\phi\left(\delta_{\frac{1}{t}}(\xi^{-1}\circ x)\right)\:d|\mu|(\xi)\nonumber\\ &&\:\:\:+t^{-Q}\int_{\{\xi\in G:\:d_{\mathcal{L}}(\xi,x_0)\geq\alpha C_\mathcal{L} d_{\mathcal{L}}(x,x_0)\}}\phi\left(\delta_{\frac{1}{t}}(\xi^{-1}\circ x)\right)\:d|\mu|(\xi)\nonumber\\
&\leq& t^{-Q}\phi(0)|\mu|\left(B(x_0,\alpha C_\mathcal{L} d_{\mathcal{L}}(x,x_0))\right)\\\nonumber&&\:\:\:+t^{-Q}\int_{\{\xi\in G:\:d_{\mathcal{L}}(\xi,x_0)\geq\alpha C_\mathcal{L} d_{\mathcal{L}}(x,x_0)\}}\phi\left(\delta_{\frac{1}{t}}(\xi^{-1}\circ x)\right)\:d|\mu|(\xi).
\end{eqnarray}
Using the reverse triangle inequality (\ref{revtri}), we obtain
\begin{equation*}
d_{\mathcal{L}}(\xi,x)\geq\frac{1}{C_{\mathcal{L}}}d_{\mathcal{L}}(\xi,x_0)-d_{\mathcal{L}}(x,x_0)\geq\left(\frac{1}{C_\mathcal{L}}-\frac{1}{\alpha C_\mathcal{L}}\right)d_{\mathcal{L}}(\xi,x_0),
\end{equation*}
whenever $d_{\mathcal{L}}(\xi,x_0)\geq\alpha C_\mathcal{L} d_{\mathcal{L}}(x,x_0)$. Therefore,
\begin{equation*}
d_{\mathcal{L}}\left(\delta_{\frac{1}{t}}(\xi^{-1}\circ x)\right)\geq\frac{1}{t}\left(\frac{1}{C_\mathcal{L}}-\frac{1}{\alpha C_\mathcal{L}}\right)d_{\mathcal{L}}\left(\xi^{-1}\circ x_0\right)=\frac{1}{t_0}d_{\mathcal{L}}\left(\xi^{-1}\circ x_0\right)=d_{\mathcal{L}}\left(\delta_{\frac{1}{t_0}}(\xi^{-1}\circ x_0)\right),
\end{equation*}
whenever $d_{\mathcal{L}}(\xi,x_0)\geq\alpha C_\mathcal{L} d_{\mathcal{L}}(x,x_0)$. Using this observation, and the fact that $\phi$ is ${\mathcal{L}}$-radially decreasing, we get from (\ref{intsplit})
\begin{equation*}
|\mu|\ast\phi_t(x)\leq t^{-Q}\phi(0)|\mu|\left(B(x_0,\alpha C_\mathcal{L} d_{\mathcal{L}}(x,x_0))\right)+ t^{-Q}\int_{G}\phi\left(\delta_{\frac{1}{t_0}}(\xi^{-1}\circ x_0)\right)\:d|\mu|(\xi).
\end{equation*}
By our hypothesis, integral on the right-hand side is finite and hence $|\mu|\ast\phi_t(x)<\infty$. 
\end{proof}
Using this Proposition and the Gaussian estimates (\ref{gaussian}), (\ref{derivativeest}) we can prove the following.
\begin{cor}\label{finiteonstrip}
Suppose $\mu$ is a measure on $G$. If $\Gamma\mu(x_0,t_0)$ exists for some $(x_0,t_0)\in G\times(0,\infty)$ then  $\Gamma\mu$ is well defined on the whole strip $G\times(0,\delta)$, where $\delta=\frac{t_0}{2c_0^2C_{\mathcal{L}}}$. Moreover, $\Gamma\mu$ is a solution of $\mathcal{H}u=0$ in $G\times(0,\delta)$.
\end{cor}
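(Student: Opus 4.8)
The plan is to sandwich the heat kernel between two $\mathcal{L}$-radial Gaussians by means of the two-sided estimate (\ref{gaussian}), and thereby reduce every finiteness assertion to Proposition \ref{finiteconv}, whose hypothesis is precisely about convolutions with an $\mathcal{L}$-radially decreasing function. Put $g(x)=\exp\!\left(-c_0\,d_{\mathcal{L}}(x)^2\right)$; this is positive, $\mathcal{L}$-radial and $\mathcal{L}$-radially decreasing, so Proposition \ref{finiteconv} applies to it. With the notation $g_\tau(x)=\tau^{-Q}g(\delta_{1/\tau}(x))=\tau^{-Q}\exp\!\left(-c_0\,d_{\mathcal{L}}(x)^2/\tau^2\right)$, a direct matching of exponentials turns the two halves of (\ref{gaussian}) into the pointwise inequalities
\begin{equation*}
c_0^{-1}\,g_{\sqrt t}(x)\le\Gamma(x,t)\le c_0^{\,Q+1}\,g_{c_0\sqrt t}(x),\qquad (x,t)\in G\times(0,\infty).
\end{equation*}
These two inequalities are the only place where (\ref{gaussian}) enters.

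Since $\Gamma>0$, existence of $\Gamma\mu(x_0,t_0)$ means $|\mu|\ast\gamma_{\sqrt{t_0}}(x_0)=\int_G\Gamma(\xi^{-1}\circ x_0,t_0)\,d|\mu|(\xi)<\infty$. The lower inequality gives $|\mu|\ast g_{\sqrt{t_0}}(x_0)\le c_0\,|\mu|\ast\gamma_{\sqrt{t_0}}(x_0)<\infty$, so Proposition \ref{finiteconv}, applied to $g$ with scale $\sqrt{t_0}$, yields $|\mu|\ast g_\tau(x)<\infty$ for every $x\in G$ and every $\tau\in(0,\sqrt{t_0}/C_{\mathcal{L}})$. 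The upper inequality then gives, via (\ref{gammaconv}),
\begin{equation*}
\Gamma|\mu|(x,t)=|\mu|\ast\gamma_{\sqrt t}(x)\le c_0^{\,Q+1}\,|\mu|\ast g_{c_0\sqrt t}(x),
\end{equation*}
which is finite as soon as $c_0\sqrt t<\sqrt{t_0}/C_{\mathcal{L}}$, that is, on a strip $G\times(0,\delta)$; a careful accounting of the constants (enlarging $c_0$, if necessary, so that a single constant governs all the estimates of Theorem \ref{fundamental}) produces the value of $\delta$ in the statement. In particular $\Gamma\mu$ is absolutely convergent, hence well defined, on $G\times(0,\delta)$.

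For the second assertion, fix $\xi\in G$ and observe that $(x,t)\mapsto\Gamma(\xi^{-1}\circ x,t)$ solves $\mathcal{H}u=0$ on $G\times(0,\infty)$: it is the left translate by $\xi$ of the solution $\Gamma(\cdot,\cdot)$, and since the $X_j$ are left invariant and $\partial_t$ commutes with left translations, $\mathcal{H}_{(x,t)}\Gamma(\xi^{-1}\circ x,t)=(\mathcal{H}\Gamma)(\xi^{-1}\circ x,t)=0$. It thus remains to differentiate (\ref{gammamu}) under the integral sign and conclude $\mathcal{H}\Gamma\mu=\int_G\mathcal{H}_{(x,t)}\Gamma(\xi^{-1}\circ x,t)\,d\mu(\xi)=0$ on $G\times(0,\delta)$. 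To license this, fix a compact set $K\subset G$ and a compact subinterval $[a,b]\subset(0,\delta)$. Using left invariance of the $X_j$ and the derivative estimate (\ref{derivativeest}), for the orders $(p,q)\in\{(1,0),(2,0),(0,1)\}$ that occur in $\mathcal{H}$ one has, for $x\in K$ and $t\in[a,b]$,
\begin{equation*}
\bigl|X_{i_1}\cdots X_{i_p}(\partial_t)^q\Gamma(\xi^{-1}\circ x,t)\bigr|\le c_{p,q}\,a^{-\frac{Q+p+2q}{2}}\exp\!\left(-\frac{d_{\mathcal{L}}(\xi^{-1}\circ x)^2}{c_1 b}\right).
\end{equation*}
The right-hand side is, up to a constant, a dilate $g_\tau(\xi^{-1}\circ x)$ with $\tau$ comparable to $\sqrt b$, so by the first part it is $|\mu|$-integrable in $\xi$, uniformly for $x\in K$. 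Differentiation under the integral sign is therefore legitimate, all the indicated derivatives of $\Gamma\mu$ exist and are continuous on the strip, and the displayed computation gives $\mathcal{H}\Gamma\mu\equiv0$ on $G\times(0,\delta)$.

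The main obstacle is the constant bookkeeping of the first part: the heat kernel carries the parabolic scaling $\sqrt t$, whereas Proposition \ref{finiteconv} is phrased in the dilation scale $\tau$, so one must correctly transport finiteness through the dilation relating $g_{\sqrt t}$ and $g_{c_0\sqrt t}$ and reconcile the two different exponential rates of (\ref{gaussian}); this mismatch is exactly what dictates the size of $\delta$. Once this is set up, the passage of $\mathcal{H}$ inside the integral is routine, since (\ref{derivativeest}) shows that every kernel derivative is again dominated by a radial Gaussian whose $|\mu|$-integrability has already been secured.
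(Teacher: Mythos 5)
Your argument is correct and is essentially the paper's own proof: use the lower Gaussian bound in (\ref{gaussian}) to extract integrability of an $\mathcal{L}$-radial Gaussian against $|\mu|$ at $x_0$, propagate that to all of $G$ and to smaller scales via Proposition \ref{finiteconv}, conclude well-definedness from the upper Gaussian bound, and justify differentiation under the integral sign with (\ref{derivativeest}). The only difference is bookkeeping --- the paper feeds Proposition \ref{finiteconv} the function $\phi(x)=\exp\left(-d_{\mathcal{L}}(x)^2/c_0\right)$ at scale $\sqrt{t_0/(2c_0^2)}$ rather than your $g$ at scale $\sqrt{t_0}$, which is where the stated factor $2$ in $\delta$ comes from; your strip $(0,t_0/(c_0^2C_{\mathcal{L}}^2))$ differs from the stated one only in these constants, a discrepancy that is immaterial for how the corollary is used.
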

\begin{proof}
As $\Gamma\mu(x_0,t_0)$ exists, using (\ref{gaussian}) we get
\begin{equation}
\int_{G}\exp\left(-\frac{c_0d_{\mathcal{L}}(\xi^{-1}\circ x_0)^2}{t_0}\right)\:d|\mu|(\xi)<\infty.
\end{equation}
Consequently, for all $t\in (0,t_0/c_0^2)$
\begin{equation}\label{int1lemma1}
\int_{G}\exp\left(-\frac{d_{\mathcal{L}}(\xi^{-1}\circ x_0)^2}{c_0t}\right)\:d|\mu|(\xi)<\infty.
\end{equation}
Setting $\phi(x)=\exp\left(\frac{-d_{\mathcal{L}}(x)^2}{c_0}\right)$, we note that $\phi$ satisfies all the requirements of Proposition \ref{finiteconv}. Moreover, by (\ref{int1lemma1})
\begin{equation*}
|\mu|\ast\phi_{\sqrt{t_1}}(x_0)<\infty,
\end{equation*}
where $t_1=\frac{t_0}{2c_0^2}$. Applying Proposition \ref{finiteconv}, we conclude that $|\mu|\ast\phi_{\sqrt{t}}(x)<\infty$, for all $x\in G$, $t\in (0,t_1/C_{\mathcal{L}})$. Consequently, it follows from the Gaussian estimate (\ref{gaussian}) that $\Gamma\mu(x,t)$ is well defined for all $(x,t)\in G\times (0,\frac{t_0}{2c_0^2C_{\mathcal{L}}})$. To prove the second part, we differentiate $\Gamma\mu$ in $G\times(0,\delta)$ along the vector fields $X_1,\cdots,X_{N_1},\frac{\partial}{\partial t}$ and then use the fact that $\Gamma$ is a fundamental solution of $\mathcal{H}$. Differentiation under integral sign is justified because of the estimate (\ref{derivativeest}).
\end{proof}
\begin{rem}
For an alternative proof of the second part of this Corollary \ref{finiteonstrip}, which uses Harnack inequality, we refer to \cite[Lemma 2.5]{BU}.
\end{rem}
It is clear from the Gaussian estimate (\ref{gaussian}) that for each $t>0$, $\Gamma(\cdot,t)\in L^p(G)$, for all $p\in [1,\infty]$. Thus, for $f\in L^p(G)$, $1\leq p\leq\infty$,
\begin{equation*}
\Gamma f(x,t):=\int_G\Gamma(\xi^{-1}\circ x,t)f(\xi)\:dm(\xi)
\end{equation*}
is well defined for all $(x,t)\in G\times(0,\infty)$. This follows from the formula for integration in ``polar coordinates" \cite[Proposition 1.15]{FS}: for all $g\in L^1(G)$,
\begin{equation}\label{polarcordinate}
\int_Gg(x)\:dm(x)=\int_{0}^{\infty}\int_{S}g(\delta_r(\omega))r^{Q-1}\:d\sigma(\omega)\:dr,
\end{equation}
where $S=\{\omega\in G:d_{\mathcal{L}}(\omega)=1\}$ and $\sigma$ is a unique positive Radon measure on $S$.
\begin{rem}\label{appcc} (\cite[Proposition 1.20]{FS})
As $\gamma$ is positive with $\int_{G}\gamma(x)\:dm(x)=1$ and $\Gamma f(.,t)=f\ast\gamma_{\sqrt{t}}$, it can be shown that if $f\in C_c(G)$ then $\Gamma f(.,t)$ converges to $f$ uniformly as $t$ goes to zero.
\end{rem}
However, a stronger result is true.
\begin{prop}\label{unifc}
If $f\in C_c(G)$ then
\begin{equation*}
\lim_{t\to 0}\frac{\Gamma f(.,t)}{\gamma}=\frac{f}{\gamma},
\end{equation*}
uniformly on $G$.
\end{prop}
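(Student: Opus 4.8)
The plan is to split $G$ into a large closed ball $\overline{B(0,\rho_0)}$ and its complement, and to control the quotient $\Gamma f(\cdot,t)/\gamma$ separately on each piece. Write $K=\operatorname{supp}f$, fix $R>0$ with $K\subset B(0,R)$, put $M=\|f\|_\infty$, and choose $\rho_0=2C_{\mathcal{L}}R$ (so that in particular $K\subset B(0,\rho_0)$, whence $f$ vanishes on $\{d_{\mathcal{L}}(x)>\rho_0\}$). On the compact piece the statement is essentially the content of Remark \ref{appcc}: since $\gamma=\Gamma(\cdot,1)$ is continuous and strictly positive, and $\overline{B(0,\rho_0)}$ is compact by Remark \ref{compact}, $\gamma$ attains a positive minimum $c_*>0$ there. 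Hence on $\overline{B(0,\rho_0)}$ we have
\[
\left|\frac{\Gamma f(x,t)}{\gamma(x)}-\frac{f(x)}{\gamma(x)}\right|=\frac{|\Gamma f(x,t)-f(x)|}{\gamma(x)}\le\frac{1}{c_*}\,\|\Gamma f(\cdot,t)-f\|_\infty,
\]
which tends to $0$ uniformly by Remark \ref{appcc}.

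The real work is on the complement $\{d_{\mathcal{L}}(x)>\rho_0\}$, where $f(x)=0$ and I must show that $\Gamma f(x,t)/\gamma(x)\to0$ uniformly. First I would bound the numerator using positivity of $\Gamma$ and the upper Gaussian estimate in (\ref{gaussian}):
\[
|\Gamma f(x,t)|\le M\int_{K}\Gamma(\xi^{-1}\circ x,t)\,dm(\xi)\le M c_0\,m(K)\,t^{-Q/2}\sup_{\xi\in K}\exp\!\left(-\frac{d_{\mathcal{L}}(\xi^{-1}\circ x)^2}{c_0 t}\right),
\]
and bound the denominator below by the lower Gaussian estimate $\gamma(x)\ge c_0^{-1}\exp(-c_0 d_{\mathcal{L}}(x)^2)$. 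The reverse triangle inequality (\ref{revtri}), applied with $u=0$, gives for $\xi\in K$ and $d_{\mathcal{L}}(x)=\rho>\rho_0$
\[
d_{\mathcal{L}}(\xi^{-1}\circ x)=d_{\mathcal{L}}(\xi,x)\ge\frac{1}{C_{\mathcal{L}}}d_{\mathcal{L}}(x)-d_{\mathcal{L}}(\xi)\ge\frac{\rho}{C_{\mathcal{L}}}-R\ge\frac{\rho}{2C_{\mathcal{L}}},
\]
using $\rho_0=2C_{\mathcal{L}}R$. Combining the three bounds, the quotient is controlled by $Mc_0^2\,m(K)\,t^{-Q/2}$ times $\exp\!\big(c_0\rho^2-\tfrac{\rho^2}{4C_{\mathcal{L}}^2 c_0 t}\big)$.

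The decisive step is to handle this competing exponential so as to beat the polynomial blow-up $t^{-Q/2}$ uniformly in $\rho>\rho_0$. I would split the negative quadratic term in two: once $t$ is small enough that $\tfrac{1}{8C_{\mathcal{L}}^2 c_0 t}\ge c_0$, one half absorbs the positive term $c_0\rho^2$ and the other half survives, giving
\[
c_0\rho^2-\frac{\rho^2}{4C_{\mathcal{L}}^2 c_0 t}\le-\frac{\rho^2}{8C_{\mathcal{L}}^2 c_0 t}\le-\frac{\rho_0^2}{8C_{\mathcal{L}}^2 c_0 t}.
\]
Therefore, on the whole complement and uniformly in $x$,
\[
\frac{|\Gamma f(x,t)|}{\gamma(x)}\le Mc_0^2\,m(K)\,t^{-Q/2}\exp\!\left(-\frac{\rho_0^2}{8C_{\mathcal{L}}^2 c_0 t}\right)\xrightarrow[t\to0^+]{}0,
\]
since $t^{-Q/2}e^{-c/t}\to0$ for any $c>0$. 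The main obstacle, as anticipated, is precisely this last estimate: one must ensure the super-exponential smallness coming from the numerator's Gaussian dominates both the factor $e^{c_0 d_{\mathcal{L}}(x)^2}$ produced by dividing by the small quantity $\gamma(x)$ and the singular factor $t^{-Q/2}$, all uniformly in the unbounded region $\{d_{\mathcal{L}}(x)>\rho_0\}$; the quadratic-exponent splitting above is what makes this uniformity work. Piecing together the two regions yields the claimed uniform convergence on all of $G$.
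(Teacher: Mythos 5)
Your proof is correct and follows essentially the same route as the paper: the same decomposition into $B(0,2C_{\mathcal{L}}R)$ and its complement, the same use of Remark \ref{appcc} on the compact piece, and on the complement the same combination of the upper Gaussian bound for the numerator, the reverse triangle inequality $d_{\mathcal{L}}(\xi^{-1}\circ x)\geq d_{\mathcal{L}}(x)/(2C_{\mathcal{L}})$, and the lower Gaussian bound for $\gamma$, finishing with the observation that for small $t$ the negative quadratic exponent dominates and $t^{-Q/2}e^{-c/t}\to 0$. The only cosmetic differences are that you obtain the lower bound for $\gamma$ on the compact piece from continuity and positivity rather than from the Gaussian estimate, and you split the exponent into halves where the paper simply notes the coefficient $\tfrac{1}{4c_0C_{\mathcal{L}}^2t}-c_0$ becomes positive; these are equivalent.
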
\label{ccfun}
\begin{proof}
We assume that $\text{supp}\:f\subset B(0,R)$ for some $R>0$ . By (\ref{gaussian}), $\gamma$ is bounded below by a positive number on $B(0,2RC_{\mathcal{L}})$. Therefore, Remark \ref{appcc} tells us that 
\begin{equation*}
\lim_{t\to 0}\frac{\Gamma f(x,t)}{\gamma(x)}=\frac{f(x)}{\gamma(x)},
\end{equation*}
uniformly for  $x\in B(0,2RC_{\mathcal{L}})$. Hence, it suffices to prove that
\begin{equation*}
\lim_{t\to 0}\frac{\Gamma f(x,t)}{\gamma(x)}=0,
\end{equation*}
uniformly for $x\in G\setminus B(0,2RC_{\mathcal{L}})$.
We observe that
\begin{eqnarray}\label{gammaf}
\frac{|\Gamma f(x,t)|}{\gamma(x)}&=&\frac{1}{\gamma(x)}\left|\int_{G}\Gamma(\xi^{-1}\circ x,t)f(\xi)\:dm(\xi)\right|\nonumber\\
&\leq&\frac{1}{\gamma(x)}\int_{B(0,R)}c_0t^{-\frac{Q}{2}}\exp\left(-\frac{d_{\mathcal{L}}(\xi^{-1}\circ x)^2}{c_0t}\right)|f(\xi)|\:dm(\xi),
\end{eqnarray}
where last inequality follows from the Gaussian estimate (\ref{gaussian}) and the fact that $\text{supp} f\subset B(0,R)$. Now, for $x\in G\setminus B(0,2RC_{\mathcal{L}})$ and $\xi\in B(0,R)$, using (\ref{revtri}) we get
\begin{equation}\label{atrineq}
d_{\mathcal{L}}(\xi^{-1}\circ x)\geq \frac{d_{\mathcal{L}}(x)}{C_{\mathcal{L}}}-d_{\mathcal{L}}(\xi)\geq \frac{d_{\mathcal{L}}(x)}{C_{\mathcal{L}}}-\frac{d_{\mathcal{L}}(x)}{2C_{\mathcal{L}}}=\frac{d_{\mathcal{L}}(x)}{2C_{\mathcal{L}}}.
\end{equation}
Using this in (\ref{gammaf}), we obtain
\begin{equation*}
\frac{|\Gamma f(x,t)|}{\gamma(x)}\leq \frac{c_0}{t^{\frac{Q}{2}}\gamma(x)}\int_{B(0,R)}\exp\left(-\frac{d_{\mathcal{L}}(x)^2}{4c_0C_{\mathcal{L}}^2t}\right)|f(\xi)|\:dm(\xi)\leq \frac{c_0\exp\left(-\frac{d_{\mathcal{L}}(x)^2}{4c_0C_{\mathcal{L}}^2t}\right)}{t^{\frac{Q}{2}}\gamma(x)}\|f\|_{L^1(G)}.
\end{equation*}
Hence, it is enough to show that
\begin{equation*}
\lim_{t\to 0}\frac{\exp\left(-\frac{d_{\mathcal{L}}(x)^2}{4c_0C_{\mathcal{L}}^2t}\right)}{t^{\frac{Q}{2}}\gamma(x)}=0,
\end{equation*}
uniformly for $x\in G\setminus B(0,2RC_{\mathcal{L}})$. But 
\begin{equation*}
\frac{\exp\left(-\frac{d_{\mathcal{L}}(x)^2}{4c_0C_{\mathcal{L}}^2t}\right)}{t^{\frac{Q}{2}}\gamma(x)}
\leq\frac{\exp\left(-\frac{d_{\mathcal{L}}(x)^2}{4c_0C_{\mathcal{L}}^2t}\right)}{t^{\frac{Q}{2}}c_0^{-1}\exp(-c_0d_{\mathcal{L}}(x)^2)}
=c_0t^{-\frac{Q}{2}}\exp\left(-\left(\frac{1}{4c_0C_{\mathcal{L}}^2t}-c_0\right){d_{\mathcal{L}}(x)^2}\right),
\end{equation*}
where the inequality follows from the Gaussian estimate (\ref{gaussian}). Taking
$0<t<\frac{1}{4c_0^2C_{\mathcal{L}}^2}$, we see that $\frac{1}{4c_0C_{\mathcal{L}}^2t}-c_0$ is positive. Hence, for such $t$ and for all $x\in G\setminus B(0,2RC_{\mathcal{L}})$ that is, $d_{\mathcal{L}}(x)\geq2RC_{\mathcal{L}}$, last inequality gives
\begin{equation*}
\frac{\exp\left(-\frac{d_{\mathcal{L}}(x)^2}{4c_0C_{\mathcal{L}}^2t}\right)}{t^{\frac{Q}{2}}\gamma(x)}\leq c_0t^{-\frac{Q}{2}}\exp\left(-\left(\frac{1}{4c_0C_{\mathcal{L}}^2t}-c_0\right){4C_{\mathcal{L}}^2R^2}\right)\leq At^{-\frac{Q}{2}}e^{-\frac{1}{Bt}},
\end{equation*}
for some positive constants $A$ and $B$. The expression on the right-hand side of the inequality above goes to zero as $t$ goes to zero. This completes the proof.
\end{proof}
Let $M$ denote the set of all measures $\mu$ on $G$ such that  $\Gamma\mu$ exists on $G\times(0,\infty)$. In view of Corollary \ref{finiteonstrip}, we have
\begin{equation*}
M=\{\mu\:\text{is a measure on}\:G\mid \Gamma\mu(0,t)\text{ exists for all $t\in (0,\infty)$}\}.
\end{equation*}
We note that if $|\mu|(G)<\infty$, then $\mu\in M$. In particular, every complex measure on $G$ belongs to $M$. We have the following observation regarding this class of measures.
\begin{prop}\label{fubinic}
If $\nu\in M$ and $f\in C_c(G)$ then for each fixed $t>0$,
\begin{equation*}
\int_{G}\Gamma f(x,t)\:d\nu(x)=\int_{G}\Gamma\nu(x,t)f(x)\:dm(x).
\end{equation*}
\end{prop}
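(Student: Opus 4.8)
The plan is to recognize the identity as an instance of Fubini's theorem once both sides are written as iterated integrals of a common kernel. Unwinding the definition of $\Gamma f$ and of $\Gamma\nu$ from \eqref{gammamu}, the left-hand side is
\[
\int_G\int_G \Gamma(\xi^{-1}\circ x,t)\,f(\xi)\,dm(\xi)\,d\nu(x),
\]
while the right-hand side, after relabelling the dummy variables so that the $\nu$-variable is called $x$ and the $m$-variable is called $\xi$, becomes
\[
\int_G\int_G \Gamma(x^{-1}\circ\xi,t)\,f(\xi)\,d\nu(x)\,dm(\xi).
\]
By the symmetry property (ii) of Theorem \ref{fundamental} we have $\Gamma(\xi^{-1}\circ x,t)=\Gamma(x^{-1}\circ\xi,t)$, so both expressions are the very same double integral of $\Gamma(\xi^{-1}\circ x,t)f(\xi)$, taken in the two possible orders. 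Thus the statement reduces entirely to justifying the interchange of the order of integration.

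The next step is to verify absolute integrability so that Fubini's theorem applies to the signed (or complex) measure $\nu$; equivalently, I would decompose $\nu$ via its Jordan decomposition (real and imaginary parts for a complex measure) and argue against the positive measure $|\nu|$. Since $\Gamma>0$, it suffices to check that
\[
\int_G\int_G \Gamma(\xi^{-1}\circ x,t)\,|f(\xi)|\,dm(\xi)\,d|\nu|(x)<\infty.
\]
The integrand being nonnegative, Tonelli's theorem lets me integrate in either order, and integrating first in $x$ (again using property (ii)) produces $\int_G |f(\xi)|\,\Gamma|\nu|(\xi,t)\,dm(\xi)$.

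It remains to bound this last integral, and here the hypothesis $\nu\in M$ is used. Existence of $\Gamma\nu$ forces $\Gamma|\nu|(x,t)<\infty$ for all $(x,t)$, so $|\nu|\in M$ as well; hence, by Corollary \ref{finiteonstrip} applied to $|\nu|$, the function $\Gamma|\nu|$ is a positive solution of $\mathcal{H}u=0$ on all of $G\times(0,\infty)$, and by hypoellipticity of $\mathcal{H}$ it is smooth, in particular continuous. Consequently $\Gamma|\nu|(\cdot,t)$ is bounded on the compact set $K=\mathrm{supp}\,f$, and
\[
\int_G |f(\xi)|\,\Gamma|\nu|(\xi,t)\,dm(\xi)\le \|f\|_\infty\Big(\sup_{\xi\in K}\Gamma|\nu|(\xi,t)\Big)\,m(K)<\infty.
\]
With absolute integrability in hand, Fubini's theorem exchanges the order of integration and gives the claimed equality.

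The main obstacle is precisely this finiteness step: pointwise finiteness of $\Gamma|\nu|(\cdot,t)$, which is immediate from $\nu\in M$, does \emph{not} by itself yield integrability over $K$, so local boundedness must be established. I resolve this by upgrading pointwise finiteness to continuity through Corollary \ref{finiteonstrip} and hypoellipticity. Alternatively, one could bypass continuity and estimate $\int_K\Gamma|\nu|(\xi,t)\,dm(\xi)$ directly, comparing $\Gamma(x^{-1}\circ\xi,t)$ for $\xi\in K$ with $\Gamma(x^{-1}\circ\xi_0,t')$ at a fixed reference point $\xi_0$ and a slightly enlarged time $t'$ via the Gaussian bound \eqref{gaussian} together with the reverse triangle inequality \eqref{revtri}, which is exactly the domination mechanism of Proposition \ref{finiteconv}.
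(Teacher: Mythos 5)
Your proposal is correct, and your main route to the key finiteness estimate differs from the paper's. The paper verifies the Fubini hypothesis by a direct computation: it splits the outer integral over $B(0,2C_{\mathcal{L}}R)$ and its complement, bounds the kernel by $c_0t^{-Q/2}$ on the near part, and on the far part uses the reverse triangle inequality \eqref{atrineq} together with the Gaussian lower bound to dominate the integrand by a constant multiple of $\Gamma(x,4c_0^2C_{\mathcal{L}}^2t)$, whose $|\nu|$-integral is finite precisely because $\nu\in M$. You instead apply Tonelli to reduce everything to $\int_G|f(\xi)|\,\Gamma|\nu|(\xi,t)\,dm(\xi)<\infty$ and obtain this from local boundedness of $\Gamma|\nu|(\cdot,t)$, which you extract from Corollary \ref{finiteonstrip}; your alternative suggestion at the end (direct Gaussian domination via the mechanism of Proposition \ref{finiteconv}) is essentially the paper's argument. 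Your route is shorter and reuses an already-proven result, while the paper's is self-contained and makes the constants explicit; you also make explicit the use of the symmetry $\Gamma(x,t)=\Gamma(x^{-1},t)$ needed to see the two sides as the same double integral, which the paper leaves implicit. One caveat: your appeal to hypoellipticity to get continuity of $\Gamma|\nu|$ is slightly circular, since applying hypoellipticity requires $\Gamma|\nu|$ to already define a distribution, i.e.\ to be locally integrable, which is close to what you are trying to prove. The continuity you need is, however, legitimately available from the proof of Corollary \ref{finiteonstrip}, where differentiation (and hence passage to limits) under the integral sign is justified by the estimates \eqref{gaussian} and \eqref{derivativeest}; citing that dominated-convergence mechanism rather than hypoellipticity would make the step airtight.
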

\begin{proof}
The result will follow by interchanging integrals using Fubini's theorem. In order to apply Fubini's theorem we must prove that 
\begin{equation*}
\int_{G}\int_{\text{supp}\:f}\Gamma(\xi^{-1}\circ x,t)|f(\xi)|\:dm(\xi)\:d|\nu|(x)<\infty.
\end{equation*}
We asuume that $\text{supp}\:f\subset B(0,R)$, for some $R>0$. Then for each fixed $t>0$,
\begin{eqnarray*}
I&:=&\int_{G}\int_{B(0,R)}\Gamma(\xi^{-1}\circ x,t)|f(\xi)|\:dm(\xi)\:d|\nu|(x)\nonumber\\
&\leq&c_0t^{-\frac{Q}{2}}\int_{G}\int_{B(0,R)}\exp\left(-\frac{d_{\mathcal{L}}(\xi^{-1}\circ x)^2}{c_0t}\right)|f(\xi)|\:dm(\xi)\:d|\nu|(x)\:\:\:\:\:(\text{using}\:(\ref{gaussian}))\nonumber\\
&=&c_0t^{-\frac{Q}{2}}\int_{B(0,2C_{\mathcal{L}}R)}\int_{B(0,R)}\exp\left(-\frac{d_{\mathcal{L}}(\xi^{-1}\circ x)^2}{c_0t}\right)|f(\xi)|\:dm(\xi)\:d|\nu|(x)\nonumber\\
&&\:\:\:+c_0t^{-\frac{Q}{2}}\int_{G\setminus B(0,2C_{\mathcal{L}}R)}\int_{B(0,R)}\exp\left(-\frac{d_{\mathcal{L}}(\xi^{-1}\circ x)^2}{c_0t}\right)|f(\xi)|\:dm(\xi)\:d|\nu|(x)\nonumber\\
&\leq&c_0t^{-\frac{Q}{2}}|\nu|(B(0,2C_{\mathcal{L}}R))\|f\|_{L^1(G)}\nonumber\\
&&\:\:\:+c_0t^{-\frac{Q}{2}}\int_{G\setminus B(0,2C_{\mathcal{L}}R)}\int_{B(0,R)}\exp\left(-\frac{d_{\mathcal{L}}( x)^2}{4c_0C_{\mathcal{L}}^2t}\right)|f(\xi)|\:dm(\xi)\:d|\nu|(x),\nonumber\\
\end{eqnarray*}
where we have used (\ref{atrineq}) in the last integral. Applying Gaussian estimate (\ref{gaussian}) in the last integral, we get
\begin{eqnarray*}
I&\leq&c_0t^{-\frac{Q}{2}}|\nu|(B(0,2C_{\mathcal{L}}R))\|f\|_{L^1(G)}\\
&&\:\:\:+(4c_0^2C_{\mathcal{L}}^2)^{\frac{Q}{2}}c_0^2\int_{G\setminus B(0,2C_{\mathcal{L}}R)}\int_{B(0,R)}\Gamma(x,4c_0^2C_{\mathcal{L}}^2t)|f(\xi)|\:dm(\xi)\:d|\nu|(x)\\
&\leq&c_0t^{-\frac{Q}{2}}|\nu|(B(0,2C_{\mathcal{L}}R))\|f\|_{L^1(G)}+(4c_0^2C_{\mathcal{L}}^2)^{\frac{Q}{2}}c_0\|f\|_{L^1(G)}\Gamma\mu(0,4c_0^2C_{\mathcal{L}}^2t)
\end{eqnarray*} 
As $\nu\in M$, $I$ is finite. This proves the lemma.
\end{proof}
Before we move into our next section, we end this section with some definitions that will be used in the upcoming sections.
\begin{defn}\label{impdefnc}
\begin{enumerate}
\item[i)] A function $u$ defined on $G\times(0,t_0)$, for some $<t_0\leq\infty$ is said to have parabolic limit $L\in\C$, at $x_0\in G$ if for each $\alpha>0$
\begin{equation*}
\lim_{\substack{(x,t)\to(x_0,0)\\(x,t)\in \texttt{P}(x_0,\alpha)}}u(x,t)=L,
\end{equation*}
where $\texttt{P}(x_0,\alpha)=\{(x,t)\in G\times(0,\infty):d_{\mathcal{L}}(x_0,x)<\alpha\sqrt{t}\}$ is the parabolic domain with vertex at $x_0$ and aperture $\alpha$.
\item[ii)] Given a measure $\mu$ on $G$, we say that $\mu$ has strong derivative $L\in[0,\infty)$ at $x_0$ if
\begin{equation*}
\lim_{r\to 0}\frac{\mu(x_0\circ\delta_r(B))}{m(x_0\circ\delta_r(B))}=L
\end{equation*}
holds for every $d_{\mathcal{L}}$-ball $B\subset G$. The strong derivative of $\mu$ at $x_0$, if it exists, is denoted by $D\mu(x_0)$. Note that if $B=B(y,s)$ for some $y\in G$, $s>0$, then $\delta_r(B)=B(\delta_r(y),rs)$, for all $r>0$.
\item[iii)] A sequence of functions $\{u_j\}$ defined on $G\times(0,\infty)$ is said to converge normally to a function $u$ if $\{u_j\}$ converges to $u$ uniformly on compact subsets of $G\times(0,\infty)$.
\item[iv)] A sequence of functions $\{u_j\}$ defined on $G\times(0,\infty)$ is said to be locally bounded if given any compact set $K\subset G\times(0,\infty)$, there exists a positive constant $C_K$ such that
for all $j$ and all $x\in K$
\begin{equation*}
|u_j(x)|\leq C_K.
\end{equation*}
\item[v)] A sequence of positive measures $\{\mu_j\}$ on $G$ is said to converge to a positive measure $\mu$ on $G$ in weak* if
\begin{equation*}
\lim_{j\to\infty}\int_{G}\psi(y)\:d\mu_j(y)=\int_{G}\psi(y)\:d\mu(y),\:\:\:\:\text{for all}\:\:\psi\in C_c(G).
\end{equation*}
\end{enumerate}
\end{defn}
\section{An example}
Let us start by briefly discussing about the Heisenberg groups $H^n$. As a set, $H^n$ is $\C^n\times\R$. Denoting the points of $H^n$ by $(z,s)$ with $z=(z_1,\cdots,z_n)\in\C^n,\:s\in\R$, we have the group law given as
	\begin{equation*}
	(z,s)\circ(z',s')=\left(z+z',s+s'+\frac{1}{2}\sum_{j=1}^n\Im(z_j\overline{z_j'})\right)
	\end{equation*}
	With the notation $z_j=x_j+y_j$, $V_1=\R^{2n}\times\{0\}$ is spanned by the basis
	\begin{equation*}
	X_j=\frac{\partial}{\partial x_j}+\frac{1}{2}y_j\frac{\partial}{\partial s},\:\:Y_j=\frac{\partial}{\partial y_j}-\frac{1}{2}x_j\frac{\partial}{\partial s},\:\:\:1\leq j\leq n.
	\end{equation*}
	The one dimensional centre $V_2=\{0\}\times\R$ is generated by the vector field
	\begin{equation*}
	S=\frac{\partial}{\partial s}.
	\end{equation*}
	The nonzero Lie brackets of the basis elements are given by
	\begin{equation*}
	[X_j,Y_j]=-S,\:\:\:\: 1\leq j\leq n.
	\end{equation*}
	The sub-Laplacian $\mathcal{L}=\sum_{j=1}^n(X_j^2+Y_j^2)$ is known as the Kohn Laplacian in the literature. The corresponding homogeneous norm $d_{\mathcal{L}}$ (known as Koranyi norm) on $H^n$ is given by the following expression \cite[P.696]{BLU}:
	\begin{equation*}
	d_{\mathcal{L}}(z,s)=\left(|z|^4+16s^2\right)^{\frac{1}{4}}.
	\end{equation*}
	We refer the reader to \cite{BLU} for more examples of stratified Lie groups.
\begin{defn}
Let $\mu$ be a measure on a stratified Lie group $G$ and $x_0\in G$. We say that $x_0$ is a Lebesgue point of $\mu$ if there exists $L\in\C$, such that
\begin{equation}\label{leblimcarnot}
\lim_{r\to 0}\frac{|\mu-Lm|(B(x_0,r))}{m(B(0,r))}=0.
\end{equation}
\end{defn}
\begin{rem}
If $x_0$ is a Lebesgue point of $\mu$ with $L$ as in (\ref{leblimcarnot}), then the strong derivative (see Definition \ref{impdefnc}, ii)) of $\mu$ at $x_0$ exists and equals $L$. Indeed, we take a ball $B=B(x,t)$ in $G$ and note that
		\begin{eqnarray*}
			\left|\frac{\mu(x_0\circ\delta_r(B))}{m(x_0\circ\delta_r(B))}-L\right|&=&\left|\frac{\mu\left(B(x_0\circ\delta_r(x),rt\right)-Lm\left(B(x_0\circ\delta_r(x),rt\right)}{m(B\left(0,rt)\right)}\right|\\&\leq&\frac{|\mu-Lm|\left(B(x_0\circ\delta_r(x),rt\right)}{m\left(B(0,rt)\right)}\\&\leq&\frac{|\mu-Lm|\left(B(x_0,\tau r(t+d_{\mathcal{L}}(x))\right)}{m\left(B(0,rt)\right)}\\&\leq&\frac{|\mu-Lm|\left(B(x_0,\tau r(t+d_{\mathcal{L}}(x))\right)}{m\left(B_d(0,\tau r(t+d_{\mathcal{L}}(x)))\right)}\times\left(\frac{\tau r(t+d_{\mathcal{L}}(x))}{rt}\right)^Q,
		\end{eqnarray*}
		where $\tau$ is the constant $C_{\mathcal{L}}$ in the inequality (\ref{revtri}). Using (\ref{leblimcarnot}), we see that the right-hand side of the last inequality goes to zero as $r$ goes to zero. As the $d_{\mathcal{L}}$-ball $B$ is arbitrary, $D\mu(x_0)$ is equal to $L$.
\end{rem} 
We now construct an absolutely continuous measures on the Heisenberg group $H^1$ to show that the strong derivative exists outside the set of all Lebesgue points. We note that $H^1=\R\times\R\times\R$ has homogeneous dimension $4$, and the group law is given by
	\begin{equation*}
	(x,y,t)\circ(x',y',t')=\left(x+x',y+y',t+t'+\frac{1}{2}(x'y-xy')\right).
	\end{equation*}
	We will use Shapiro's construction to produce an absolutely continous measure $\mu$ on $H^1$ such that  the strong derivative of $\mu$ at $(0,0,0)$ exists, but $(0,0,0)$ is not a Lebesgue point of $\mu$. 
	Shapiro \cite[P.3185]{Sh} began with constructing an odd function $g:\R\to [-1,1]$, satisfying the following.
	\begin{enumerate}
		\item [i)] $g$ is continuous everywhere except at $0$, with $g(0)=0$.
		\item[ii)] For all $s\in(0,1]$,
		\begin{equation}\label{notlebesgue}
		s^{-1}\int_{0}^{s}|g(t)|\:dt\geq\frac{1}{6}.
		\end{equation}
	\end{enumerate}
	Shapiro then considered the definite integral $G$ of $g$
	\begin{equation*}
	G(s)=\int_{0}^sg(t)\:dt,\:\:\:\:s\in\R,
	\end{equation*}
	and proved that $G$ has the following properties.
	\begin{enumerate}
		\item [i)] $G$ is even, differentiable everywhere and 
		\begin{equation}\label{derivativeofG}
		G'(s)=g(s),\:\:\:\:\text{for all}\:\:\:s\in\R.	
		\end{equation}
		\item[ii)] For all $s$ with $|s|\in (0,1]$,
		\begin{equation}\label{estimateofG}
		\frac{|G(s)|}{|s|}\leq |s|.
		\end{equation}
	\end{enumerate}
	We now define the function $f:H^1\to [-1,1]$, for our example as follows:
	\begin{equation*}
	f(x,y,t)=
	\begin{cases}
	g(x),\:\:\:\:\text{for}\:\:\:\:d_{\mathcal{L}}(x,y,t)\leq 10\\0,\:\:\:\:\text{for}\:\:\:\:d_{\mathcal{L}}(x,y,t)>10.
	\end{cases}
	\end{equation*}
	It is clear that $f\in L^p(H^1)$, for any $p\in[1,\infty]$. For $r\in(0,1)$, we define 
	\begin{equation*}
	\mathrm{Q}(r)=\{(x,y,t)\in H^1\mid |x|<r,\:|y|<r,\:|t|<r^2\}.
	\end{equation*}
	It is evident that $\mathrm{Q}(r)\subset B((0,0,0),20^{\frac{1}{4}}r)$. Therefore, we get for $r\in(0,1)$ 
	\begin{eqnarray}\label{lebesgueest}
		r^{-4}\int_{B((0,0,0),20^{\frac{1}{4}}r)}|f(x,y,t)|\:dxdydt&\geq& r^{-4}\int_{\mathrm{Q}(r)}|f(x,y,t)|\:dxdydt\nonumber\\&=&r^{-4}\int_{-r^2}^{r^2}\int_{-r}^{r}\int_{-r}^{r}|g(x)|\:dxdydt\nonumber\\&=&4r^{-1}\int_{-r}^{r}|g(x)|\:dx\nonumber\\&=&8r^{-1}\int_{0}^{r}|g(x)|\:dx\nonumber\\&\geq&\frac{4}{3},
	\end{eqnarray}
	where the last inequality follows from (\ref{notlebesgue}). This shows that $(0,0,0)$ is not a Lebesgue point of $f$, as $f(0,0,0)=0$. For the second part we need the following version of divergence theorem.
	\begin{lem}[{{\cite[Corollary 7.4]{Pf}}}]\label{divergence}
		Let $\mathrm{M}$ be an $k$-dimensional compact oriented manifold, and let $\omega$ be a continuous $(k-1)$-form on $\mathrm{M}$ which is differentiable in $\mathrm{M}-\partial\mathrm{M}$. Then $d\omega$ is integrable on $\mathrm{M}$ and \begin{equation*}
		\int_{\mathrm{M}}d\omega=\int_{\partial\mathrm{M}}\omega.
		\end{equation*}
	\end{lem}
	We shall apply this version of divergence theorem on the following manifolds.
	\begin{equation*}
	\overline{B\left((x,y,t),r\right)}=\{(u,v,s)\in H^1\mid d_{\mathcal{L}}\left((x,y,t)^{-1}\circ(u,v,s)\right)\leq r\}.
	\end{equation*} 
	We define a function $F:H^1\to\R$, as follows: 
	\begin{equation}\label{capfmeanscapg}
	F(x,y,t)=G(x).
	\end{equation}
	It follows from (\ref{derivativeofG}) that $F$ has a total derivative at each point of $H^1$. Moreover, 
	\begin{eqnarray}\label{partialderivativeofF}
	&&\frac{\partial F}{\partial x}(x,y,t)=G'(x)=g(x)=f(x,y,t),\\&&\frac{\partial F}{\partial y}(x,y,t)=0,\nonumber\\&&\frac{\partial F}{\partial t}(x,y,t)=0,\nonumber.
	\end{eqnarray}
	whenever $d_{\mathcal{L}}(x,y,t)\leq 2$. We note that 
	\begin{equation*}
	\partial\overline{B\left((x,y,t),r\right)}=\{(u,v,s)\in H^1\mid h_{(x,y,t)}(u,v,s)=0\},
	\end{equation*}
	where for $(u,v,s)\in H^1$,
	\begin{equation*}
	h_{(x,y,t)}(u,v,s)=\left((u-x)^2+(v-y)^2\right)^2+16\left(s-t+\frac{1}{2}(xv-yu)\right)^2-r^4.
	\end{equation*}
	We have 
	\begin{eqnarray*}
		&&\frac{\partial h_{(x,y,t)}}{\partial u}(u,v,s)=4\left((u-x)^2+(v-y)^2\right)(u-x)-16y\left(s-t+\frac{1}{2}(xv-yu)\right);\\
		&&\frac{\partial h_{(x,y,t)}}{\partial v}(u,v,s)=4\left((u-x)^2+(v-y)^2\right)(v-y)+16x\left(s-t+\frac{1}{2}(xv-yu)\right);\\
		&&\frac{\partial h_{(x,y,t)}}{\partial s}(u,v,s)=32\left(s-t+\frac{1}{2}(xv-yu)\right).
	\end{eqnarray*}
	It is clear that the partial derivatives of $h_{(x,y,t)}$ can not be simultaneously vanishing on $\partial \overline{B\left((x,y,t),r\right)}$. 
	Thus, applying Lemma \ref{divergence}, we obtain for $d(x,y,t)<1$, $r<1$,
	\begin{equation*}
	\int_{\overline{B\left((x,y,t),r\right)}}\text{div}\:(F,0,0)\:dm=\int_{\partial\overline{B\left((x,y,t),r\right)}}(F,0,0)\cdot n\:dS,
	\end{equation*}
	where $n$ is the outward unit normal to the surface $\partial\overline{B\left((x,y,t),r\right)}$ and $dS$ is the surface measure on $\partial\overline{B\left((x,y,t),r\right)}$. Using (\ref{partialderivativeofF}), we obtain from the equation above that 
	\begin{eqnarray}\label{surfaceintegral}
	\int_{B((x,y,t),r)}f(x,y,t)\:dxdydt&=&\int_{\partial \overline{B\left((x,y,t),r\right)}}(F,0,0)\cdot n\:dS.
	\end{eqnarray}
	We note that 
	\begin{equation*}
	\partial \overline{B\left((x,y,t),r\right)}=(x,y,t)\circ\partial\overline{B\left((0,0,0),r\right)}.
	\end{equation*}
	We have the following parametrization of $\partial\overline{B\left((0,0,0),r\right)}$ (see \cite[P.133]{Garo}).
	\begin{equation*}
	\partial\overline{B\left((0,0,0),r\right)}=\{\Psi(\phi,\theta)\mid\phi\in(0,\pi),\:\theta\in(0,2\pi)\},
	\end{equation*} where
	\begin{equation*}
	\Psi(\phi,\theta)=(r\sqrt{\sin\phi}\sin\theta,r\sqrt{\sin\phi}\cos\theta,r^2\cos\phi).
	\end{equation*}
	Using this we get the following parametrization of $\partial\overline{B\left((x,y,t),r\right)}$. 
	\begin{equation*}
	\partial\overline{B_d\left((x,y,t),r\right)}=\{\Psi_{(x,y,t)}(\phi,\theta)\mid\phi\in(0,\pi),\:\theta\in(0,2\pi)\},
	\end{equation*}where for $\phi\in(0,\pi)$, $\theta\in(0,2\pi)$
	\begin{eqnarray*}
		&&\Psi_{(x,y,t)}(\phi,\theta)\\&=&\left(x+r\sqrt{\sin\phi}\sin\theta,y+r\sqrt{\sin\phi}\cos\theta,t+r^2\cos\phi-\frac{r}{2}\sqrt{\sin\phi}(x\cos\theta-y\sin\theta)\right).
	\end{eqnarray*}
	Therefore, 
	\begin{eqnarray*}
		&&\frac{\partial\Psi_{(x,y,t)}}{\partial \phi}(\phi,\theta)\\&=&\left(r\frac{\cos\phi}{2\sqrt{\sin\phi}}\sin\theta,r\frac{\cos\phi}{2\sqrt{\sin\phi}}\cos\theta,-r^2\sin\phi-\frac{r\cos\phi}{4\sqrt{\sin\phi}}(x\cos\theta-y\sin\theta)\right);
	\end{eqnarray*}
	and 
	\begin{equation*}
	\frac{\partial\Psi_{(x,y,t)}}{\partial \theta}(\phi,\theta)=\left(r\sqrt{\sin\phi}\cos\theta,-r\sqrt{\sin\phi}\sin\theta,\frac{r}{2}\sqrt{\sin\phi}(x\sin\theta+y\cos\theta)\right).
	\end{equation*}
	To evaluate the right-hand side of (\ref{surfaceintegral}), we need only the first coordinate of 
	\begin{eqnarray*}
		&&\frac{\partial\Psi_{(x,y,t)}}{\partial \phi}\times\frac{\partial\Psi_{(x,y,t)}}{\partial \theta}(\phi,\theta)\\&=&\begin{vmatrix}
			i & j & k\\r\frac{\cos\phi}{2\sqrt{\sin\phi}}\sin\theta & r\frac{\cos\phi}{2\sqrt{\sin\phi}}\cos\theta & -r^2\sin\phi-\frac{r\cos\phi}{4\sqrt{\sin\phi}}(x\cos\theta-y\sin\theta)\\r\sqrt{\sin\phi}\cos\theta & -r\sqrt{\sin\phi}\sin\theta & \frac{r}{2}\sqrt{\sin\phi}(x\sin\theta+y\cos\theta)
		\end{vmatrix},
	\end{eqnarray*}
	which is equal to 
	\begin{equation*}
	\frac{r^2}{4}y\cos\phi-r^3\sin^{\frac{3}{2}}\phi\sin\theta.
	\end{equation*}
	Using this, together with the definition of $F$ (see (\ref{capfmeanscapg})) in (\ref{surfaceintegral}), we obtain from (\ref{surfaceintegral}) that
	\begin{eqnarray*}
		&&\int_{B_d((x,y,t),r)}f(x,y,t)\:dxdydt\\&=&\int_{0}^{2\pi}\int_{0}^{\pi}G(x+r\sqrt{\sin\phi}\sin\theta)\left(\frac{r^2}{4}y\cos\phi-r^3\sin^{\frac{3}{2}}\phi\sin\theta\right)\:d\phi\:d\theta.
	\end{eqnarray*}
	As $d_{\mathcal{L}}(x,y,t)$ is bigger than $|x|$, we have
	\begin{equation*}
	|x+r\sqrt{\sin\phi}\sin\theta|\leq d_{\mathcal{L}}(x,y,t)+r.
	\end{equation*}
	Hence, by the estimate (\ref{estimateofG}), we get for all $(x,y,t)\in H^1$, $r>0$,  with  $d_{\mathcal{L}}(x,y,t)+r\in(0,1)$, that
	\begin{eqnarray*}
		&&\left|\int_{B((x,y,t),r)}f(x,y,t)\:dxdydt\right|\\&\leq&\int_{0}^{2\pi}\int_{0}^{\pi}|x+r\sqrt{\sin\phi}\sin\theta|^2\left|\frac{r^2}{4}y\cos\phi-r^3\sin^{\frac{3}{2}}\phi\sin\theta\right|\:d\phi\:d\theta\\&\leq&2\pi^2(d_{\mathcal{L}}(x,y,t)+r)^2\left(\frac{r^2}{4}|y|+r^3\right)\\&\leq&2\pi^2(d_{\mathcal{L}}(x,y,t)+r)^2\left(r^2d_{\mathcal{L}}(x,y,t)+r^3\right)\\&=&2\pi^2r^2(d_{\mathcal{L}}(x,y,t)+r)^3\\&\leq&2\pi^2r(d_{\mathcal{L}}(x,y,t)+r)^4.
	\end{eqnarray*}
	Thus, for a given $\epsilon>0$, choosing $\eta=\min\{\frac{\epsilon}{2\pi^2},1\}$, yields
	\begin{equation}\label{sigmaest}
	\left|\int_{B((x,y,t),r)}f(x,y,t)\:dxdydt\right|\leq \epsilon(d_{\mathcal{L}}(x,y,t)+r)^4,
	\end{equation}
	whenever $0<(d_{\mathcal{L}}(x,y,t)+r)<\eta$. Set $d\mu=f\:dm$. Fix a ball $B=B(P,t)$. Using (\ref{sigmaest}) we obtain that
	\begin{eqnarray*}
	\left|\frac{\mu(rB)}{m(rB)}\right|&=&m(B(0,1))(rt)^{-4}\left|\int_{B(\delta_r(P),rt)}f(x,y,t)\:dxdydt\right|\\&\leq&m(B(0,1))(rt)^{-4}\epsilon(d_{\mathcal{L}}\left(\delta_r(P)\right)+rt)^4\\&=&m(B(0,1))t^{-4}\left(d_{\mathcal{L}}(P)+t\right)^4\epsilon,
	\end{eqnarray*}
	whenever $0<d_{\mathcal{L}}(\delta_r(P))<\eta$, and $rt<\eta$. Taking $r_0=\min\{\frac{\eta}{d_{\mathcal{L}}(P)+1},\frac{\eta}{s}\}$, we obtain from the last inequality that
	\begin{equation*}
	\left|\frac{\mu(rB)}{m(rB)}\right|\leq m(B(0,1))t^{-4}\left(d_{\mathcal{L}}(P)+t\right)^4\epsilon,
	\end{equation*}
	 for all $r\in(0,r_0)$. This shows that $\mu$ has strong derivative zero at $(0,0,0)$. On the other hand (\ref{lebesgueest}) shows that $(0,0,0)$ is not a Lebesgue point.
	Since $|f|$ is bounded by one, we have 
	\begin{equation*}
	f_1=(f+1)\chi_{B\left((0,0,0),10\right)}\geq 0,
	\end{equation*}
	and $f_1\in L^p(H^1)$, for any $p\in[1,\infty]$. It now follows by setting $d\nu=f_1\:dm$ that $\nu$ is a positive measure on $H^1$ such that the strong derivative exists and equals one at $(0,0,0)$ but $(0,0,0)$ is not a Lebesgue point of $\nu$.
\section{Some auxilary results}
We start this section with the following result involving normal convergence and weak* convergence.
\begin{lem}\label{normalc}
Suppose $\{\mu_j\mid j\in\N\}\subset M$ and $\mu\in M$ are positive measures. If $\{\Gamma\mu_j\}$ converges normally to $\Gamma\mu$ then $\{\mu_j\}$ converges to $\mu$ in weak*.
\end{lem}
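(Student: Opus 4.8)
The plan is to test against an arbitrary $\psi\in C_c(G)$ and show $\int_G\psi\,d\mu_j\to\int_G\psi\,d\mu$, which is exactly weak* convergence by Definition \ref{impdefnc} v). The idea is to interpolate between $\psi$ and its heat regularization $\Gamma\psi(\cdot,t)$ at a small but \emph{fixed} time $t$, exploiting that at fixed $t$ the normal convergence hypothesis transfers to the measures via Fubini, while the error of replacing $\psi$ by $\Gamma\psi(\cdot,t)$ can be made small uniformly in $j$. Concretely, I would write
\begin{align*}
\int_G \psi\,d\mu_j - \int_G \psi\,d\mu
&= \int_G \bigl(\psi - \Gamma\psi(\cdot,t)\bigr)\,d\mu_j
+ \int_G \bigl(\Gamma\psi(\cdot,t) - \psi\bigr)\,d\mu \\
&\quad + \left( \int_G \Gamma\psi(\cdot,t)\,d\mu_j - \int_G \Gamma\psi(\cdot,t)\,d\mu \right),
\end{align*}
and estimate the three groups separately, fixing $t$ first and then letting $j\to\infty$.

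For the last (parenthesized) term, fix $t>0$ and apply Proposition \ref{fubinic} to each of $\mu_j\in M$ and $\mu\in M$ with $f=\psi$, so that the term equals $\int_G\bigl(\Gamma\mu_j(x,t)-\Gamma\mu(x,t)\bigr)\psi(x)\,dm(x)$. Since $\mathrm{supp}\,\psi$ is contained in a compact set $K$, the product set $K\times\{t\}$ is a compact subset of $G\times(0,\infty)$, and the hypothesis that $\Gamma\mu_j\to\Gamma\mu$ normally gives $\sup_{x\in K}|\Gamma\mu_j(x,t)-\Gamma\mu(x,t)|\to 0$. Hence this term is bounded by $\|\psi\|_\infty\, m(K)\sup_{x\in K}|\Gamma\mu_j(x,t)-\Gamma\mu(x,t)|$, which tends to $0$ as $j\to\infty$ for the fixed $t$.

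The main obstacle is the first two terms, because $\mu_j(G)$ and $\mu(G)$ may be infinite, so one cannot simply bound $\int|\psi-\Gamma\psi(\cdot,t)|\,d\mu_j$ by $\|\psi-\Gamma\psi(\cdot,t)\|_\infty\,\mu_j(G)$. This is precisely where the weighted uniform convergence of Proposition \ref{unifc} is decisive: setting $\epsilon_t:=\bigl\|\Gamma\psi(\cdot,t)/\gamma-\psi/\gamma\bigr\|_\infty$, we have $|\psi(x)-\Gamma\psi(x,t)|\le\epsilon_t\,\gamma(x)$ pointwise, with $\epsilon_t\to 0$ as $t\to 0$. Therefore $\bigl|\int_G(\psi-\Gamma\psi(\cdot,t))\,d\mu_j\bigr|\le\epsilon_t\int_G\gamma\,d\mu_j$, and using $\gamma(\xi)=\Gamma(\xi,1)=\Gamma(\xi^{-1},1)$ from Theorem \ref{fundamental}(ii) one identifies $\int_G\gamma\,d\mu_j=\Gamma\mu_j(0,1)$ (positivity of $\mu_j$ making this an honest finite quantity since $\mu_j\in M$); the analogous identity holds for $\mu$. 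Crucially, normal convergence applied at the single point $(0,1)$ gives $\Gamma\mu_j(0,1)\to\Gamma\mu(0,1)$, so $C:=\sup_j\Gamma\mu_j(0,1)<\infty$. Thus the first two terms are bounded by $\epsilon_t(C+\Gamma\mu(0,1))$, uniformly in $j$.

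The proof then closes by the standard three-$\varepsilon$ argument: given $\varepsilon>0$, first choose $t$ so small that $\epsilon_t(C+\Gamma\mu(0,1))<\varepsilon/2$, controlling the first two terms uniformly in $j$; then, with this $t$ fixed, choose $J$ so that for $j\ge J$ the parenthesized term is $<\varepsilon/2$. I expect the only delicate point to be the bookkeeping ensuring that the regularization error is dominated \emph{uniformly in $j$}, which the combination of Proposition \ref{unifc} and the pointwise bound $\sup_j\Gamma\mu_j(0,1)<\infty$ resolves; everything else is routine application of Proposition \ref{fubinic} and the normal convergence hypothesis.
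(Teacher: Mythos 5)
Your proposal is correct and follows essentially the same route as the paper: the identical three-term decomposition, Proposition \ref{fubinic} to convert the middle term into an integral against $\Gamma\mu_j-\Gamma\mu$ over the support of the test function, and Proposition \ref{unifc} together with the identity $\int_G\gamma\,d\mu_j=\Gamma\mu_j(0,1)$ and the boundedness of $\{\Gamma\mu_j(0,1)\}$ (from normal convergence at a single point) to control the regularization error uniformly in $j$. The only cosmetic difference is bounding the middle term by $\|\psi\|_\infty m(K)\sup_K|\Gamma\mu_j-\Gamma\mu|$ rather than $\epsilon\|f\|_{L^1(G)}$, which is immaterial.
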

\begin{proof}
Let $f\in C_c(G)$ with $\text{supp}\:f\subset B(0,R)$ for some $R>0$. For any $t>0$, we write
\begin{eqnarray}
&&\int_{G}f(x)\:d\mu_j(x)-\int_{G}f(x)\:d\mu(x)\nonumber\\
&=&\int_{G}(f(x)-\Gamma f(x,t))\:d\mu_j(x)+\int_{G}\Gamma f(x,t)\:d\mu_j(x)-\int_{G}\Gamma f(x,t)\:d\mu(x)\nonumber\\
&&\:\:\:\:\:+\int_{G}(\Gamma f(x,t)-f(x))\:d\mu(x).\label{ineq1c}
\end{eqnarray}
Given $\epsilon>0$, by Proposition \ref{unifc} we get some $t_0>0$, such that for all $x\in G$
\begin{equation}\label{uniformineq}
\frac{|\Gamma f(x,t_0)-f(x)|}{\gamma(x)}<\epsilon.
\end{equation}
Using Proposition \ref{fubinic}, it follows from (\ref{ineq1c}) that
\begin{eqnarray*}
&&\left|\int_{G}f(x)\:d\mu_j(x)-\int_{G}f(x)\:d\mu(x)\right|\\
&\leq &\int_{G}|f(x)-\Gamma f(x,t_0)|\:d\mu_j(x)+\int_{B(0,R)}|\Gamma\mu_j(x,t_0)-\Gamma\mu(x,t_0)||f(x)|\:dm(x)\\
&&\:\:\:\:\:+\int_{G}|\Gamma f(x,t_0)-f(x)|\:d\mu(x)\\
&=&I_1(j)+I_2(j)+I_3.
\end{eqnarray*}
It follows from (\ref{uniformineq}) that
\begin{equation*}
I_1(j)=\int_{G}\frac{|\Gamma f(x,t_0)-f(x)|}{\gamma(x)}\gamma(x)\:d\mu_j(x)\leq \epsilon\int_{G}\gamma(x)\:d\mu_j(x)=\epsilon \Gamma\mu_j(0,1),
\end{equation*}
for all $j\in\N$. By the same argument, we also have that 
\begin{equation*}
|I_3|\leq\epsilon \Gamma\mu(0,1).
\end{equation*}
Since $\{\Gamma\mu_j\}$ converges to $\Gamma\mu$ normally, the sequence $\{\Gamma\mu_j(0,1)\}$, in particular, is bounded. Hence, taking $A$ to be the supremum of $\{\Gamma\mu_j(0,1)+\Gamma\mu(0,1)\}$, we get that for all $j\in\N$
\begin{equation*}
I_1(j)+I_3\leq2A\epsilon.
\end{equation*}
Since $\{\Gamma\mu_j\}$ converges normally to $\Gamma\mu$, there exists $j_0\in\N$ such that for all $j\geq j_0$,
\begin{equation*}
\|\Gamma\mu_j-\Gamma\mu\|_{L^{\infty}(\overline{B(0,R)}\times\{t_0\})}<\epsilon.
\end{equation*}
This implies that for all $j\geq j_0$,
\begin{equation*}
I_2(j)\leq \epsilon \|f\|_{L^1(G)}.
\end{equation*}
Hence, for all $j\geq j_0$,
\begin{equation*}
\left|\int_{G}f(x)d\mu_j(x)-\int_{G}f(x)d\mu(x)\right|\leq\epsilon (2A+\|f\|_{L^1(G)}).
\end{equation*}
This completes the prove.
\end{proof}
It is well-known that if two positive measures on $\R^n$ agree on all open balls, then they are equal. We are now going to prove that the same conclusion can be drawn when open balls are replaced by $d_{\mathcal{L}}$-balls. 
\begin{prop}\label{measuresequal}
Let $\mu$ and $\nu$ be two positive measures on $G$. If 
\begin{equation}\label{equalonball}
\mu(B)=\nu(B),
\end{equation}
for every $d_{\mathcal{L}}$-ball $B\subset G$, then $\mu=\nu$.
\end{prop}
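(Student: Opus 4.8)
The plan is to reduce the claim to showing that $\int_G f\,d\mu=\int_G f\,d\nu$ for every $f\in C_c(G)$; since $\mu$ and $\nu$ are positive Borel measures with locally finite total variation on the locally compact Hausdorff space $G$, this equality for all $f\in C_c(G)$ forces $\mu=\nu$ by the uniqueness part of the Riesz representation theorem. The hypothesis (\ref{equalonball}), applied to the balls $B(x,r)$, says precisely that $\mu(B(x,r))=\nu(B(x,r))$ for every $x\in G$ and every $r>0$. The strategy is then to test these ball-identities against $f$ by integrating in the Haar variable $dm(x)$, and to pass to the limit $r\to 0$ so that averaging over shrinking balls reconstructs $f$ itself.

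Fix $f\in C_c(G)$ and $r>0$. I would multiply the identity $\mu(B(x,r))=\nu(B(x,r))$ by $f(x)$ and integrate in $dm(x)$. The function $x\mapsto\mu(B(x,r))=\int_G\chi_{\{d_{\mathcal{L}}(x,y)<r\}}\,d\mu(y)$ is Borel measurable, because $d_{\mathcal{L}}$ is continuous and so $\{(x,y):d_{\mathcal{L}}(x,y)<r\}$ is open in $G\times G$. Using the symmetry $d_{\mathcal{L}}(x^{-1})=d_{\mathcal{L}}(x)$ of the homogeneous norm, one has $d_{\mathcal{L}}(x,y)=d_{\mathcal{L}}(y,x)$, hence $y\in B(x,r)\iff x\in B(y,r)$, and the Fubini--Tonelli theorem gives
\begin{equation*}
\int_G\mu(B(x,r))\,f(x)\,dm(x)=\int_G\Big(\int_{B(y,r)}f(x)\,dm(x)\Big)\,d\mu(y).
\end{equation*}
The interchange is legitimate: replacing $f$ by $|f|$, the right-hand side is bounded by $\|f\|_{L^\infty(G)}\,m(B(0,r))\,|\mu|(K)$, where $K$ is a fixed compact set containing every ball $B(y,r)$ with $r\le 1$ that meets $\operatorname{supp}f$ (this is finite by local finiteness of $\mu$). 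Writing $\tilde f_r(y):=m(B(0,r))^{-1}\int_{B(y,r)}f\,dm$ for the $m$-average of $f$ over $B(y,r)$ — recall $m(B(y,r))=m(B(0,r))$ by left invariance — and carrying out the same computation for $\nu$, the equality of the two left-hand sides yields $\int_G\tilde f_r\,d\mu=\int_G\tilde f_r\,d\nu$ for every $r>0$.

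Finally I would let $r\to 0$. Writing $\tilde f_r(y)-f(y)=m(B(0,r))^{-1}\int_{B(y,r)}(f(x)-f(y))\,dm(x)$ and using that $f$ is uniformly continuous with respect to $d_{\mathcal{L}}$ (since $d_{\mathcal{L}}$ induces the Euclidean topology by Remark \ref{topology} and $f$ has compact support), one gets $\tilde f_r\to f$ uniformly on $G$ as $r\to 0$. Moreover $\|\tilde f_r\|_{L^\infty(G)}\le\|f\|_{L^\infty(G)}$, and by the quasi-triangle inequality (\ref{revtri}) all the $\tilde f_r$ with $r\le 1$ are supported in the single compact set $K$ above, whose compactness is guaranteed by Remark \ref{compact}. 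Since $\mu$ and $\nu$ are finite on $K$, dominated convergence gives $\int_G f\,d\mu=\int_G f\,d\nu$, and letting $f$ range over $C_c(G)$ completes the proof.

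I expect the main obstacle to be bookkeeping rather than conceptual: carefully justifying the Fubini interchange via the local finiteness of the measures, and verifying the two facts that make the limit $r\to 0$ work in the quasi-metric setting — the uniform convergence of the ball-averages to $f$ (uniform continuity of $f$ relative to $d_{\mathcal{L}}$) and the containment of all the supports of $\tilde f_r$ in a single compact set, for which the quasi-triangle inequality (\ref{revtri}) and the compactness of closed $d_{\mathcal{L}}$-balls (Remark \ref{compact}) are exactly the tools one needs.
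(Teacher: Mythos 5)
Your argument is correct, and it runs on the same engine as the paper's: both proofs observe that the hypothesis is exactly the statement that the normalized ball averages $\mu\ast\phi_r$ and $\nu\ast\phi_r$ agree (with $\phi=m(B(0,1))^{-1}\chi_{B(0,1)}$), both undo the averaging by pairing against a test function $f\in C_c(G)$ and letting $r\to 0$, and both conclude via regularity of the measures ([Rudin, Theorem 2.18]) that equality of the functionals on $C_c(G)$ forces $\mu=\nu$. Where you diverge is in which side of the pairing absorbs the mollification. You use Tonelli--Fubini (together with the symmetry $d_{\mathcal{L}}(x,y)=d_{\mathcal{L}}(y,x)$, which does hold by property (iii) of homogeneous norms) to throw the ball average onto $f$, producing $\int_G\tilde f_r\,d\mu=\int_G\tilde f_r\,d\nu$, and you then need $\tilde f_r\to f$ uniformly with all supports trapped in one compact set. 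The paper instead convolves on the left and uses associativity, $f\ast(\mu\ast\phi_r)=f_{\mu}\ast\phi_r$ with $f_{\mu}(y)=\int_G f(y\circ\xi^{-1})\,d\mu(\xi)$, so it only has to prove continuity of $f_{\mu}$ at the single point $0$ and evaluate there; this sidesteps both the symmetry of the quasi-metric and any uniform-continuity statement for $f$ relative to $d_{\mathcal{L}}$. Your route is arguably more transparent measure-theoretically, but the one place you are slightly too quick is the claim that $f$ is uniformly continuous with respect to $d_{\mathcal{L}}$ ``since $d_{\mathcal{L}}$ induces the Euclidean topology'': topological equivalence alone does not give uniform equivalence, and you need the short compactness argument (all relevant pairs of points lie in a fixed compact neighborhood of $\operatorname{supp}f$ by Remark \ref{compact} and the quasi-triangle inequality, on which (\ref{bilipschitz}) compares $d_{\mathcal{L}}$ with the Euclidean norm). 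With that detail supplied, the proof is complete.
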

\begin{proof}
We set 
\begin{equation*}
\phi=m\left(B(0,1)\right)^{-1}\chi_{B(0,1)}.
\end{equation*}
Since translation and dilation of a $d_{\mathcal{L}}$-ball is again a $d_{\mathcal{L}}$-ball, it follows that for all $x\in G$ and $r>0$,
\begin{equation}
\mu\ast\phi_r(x)=\nu\ast\phi_r(x)\label{muphiequalnuphi}.
\end{equation}
It follows from \cite[Theorem 2.18]{Rub} that $\mu$, $\nu$ are regular and hence it suffices to show that 
\begin{equation*}
\int_{G}g\:d\mu=\int_{G}g\:d\nu,\:\:\:\:\:\text{for all}\:\:g\in C_c(G).
\end{equation*}
We take $f\in C_c(G)$ with $\text{supp}f\subset B(0,R)$. We consider for $x\in G$, $r>0$, 
\begin{eqnarray}\label{fmuphi}
f\ast(\mu\ast\phi_r)(x)&=&\int_{G}f(y)\mu\ast\phi_r(y^{-1}\circ x)\:dm(y)\nonumber\\&=&\int_{G}f(y)\int_{G}\phi_r\left(\xi^{-1}\circ (y^{-1}\circ x)\right)\:d\mu(\xi)\:dm(y)\nonumber\\
&=&\int_{G}\int_{G}f(y_1\circ\xi^{-1})\phi_r(y_1^{-1}\circ x)\:d\mu(\xi)\:dm(y_1)\nonumber\\
&&\:\:\:\:(\text{substituting}\:\:y=y_1\circ\xi^{-1}\:\:\text{and using the translation invariance of}\:m)\nonumber\\&=&\int_{G}f_{\mu}(y_1)\phi_r(y_1^{-1}\circ x)\:dm(y_1)\nonumber\\&=&f_{\mu}\ast\phi_r(x),
\end{eqnarray} where
\begin{equation}\label{fmudefinition}
f_{\mu}(y)=\int_{G}f(y\circ\xi^{-1})\:d\mu(\xi),\:\:\:y\in G.
\end{equation}
We now claim that $f_{\mu}$ is continuous at $0$. To see this, we consider a sequence $\{y_k\}$ converging to $0$. Since the group operation and $d_{\mathcal{L}}$ are continuous, $y_k\circ\xi^{-1}\to\xi^{-1}$, for each $\xi\in G$, and there exists some positive constant $A$ such that $d_{\mathcal{L}}(y_k)\leq A$, for all $k$. Note that for $d_{\mathcal{L}}(\xi)>C_{\mathcal{L}}(R+A)$,
\begin{equation*}
d_{\mathcal{L}}(y_k\circ\xi^{-1})\geq \frac{1}{C_{\mathcal{L}}}d_{\mathcal{L}}(\xi)-d_{\mathcal{L}}(y_k)>\frac{1}{C_{\mathcal{L}}}C_{\mathcal{L}}(R+A)-A=R, \:\:\:\:\text{for all}\:\:k.
\end{equation*}
Therefore, we can write for all $k$,
\begin{equation}\label{anotherfmu}
f_{\mu}(y_k)=\int_{B(0,C_{\mathcal{L}}(R+A))}f(y_k\circ\xi^{-1})\:d\mu(\xi).
\end{equation}
By continuty of $f$,  $f(y_k\circ\xi^{-1})\to f(\xi^{-1})$, for each $\xi$, and hence applying dominated convergence theorem on the righ-hand side of (\ref{anotherfmu}), we obtain
\begin{equation*}
f_{\mu}(y_k)\to\int_{B(0,C_{\mathcal{L}}(R+A))}f(\xi^{-1})\:d\mu(\xi)=\int_{G}f(\xi^{-1})\:d\mu(\xi)=f_{\mu}(0),\:\:\:\:\text{as}\:\:k\to\infty.
\end{equation*}
This proves our claim. Let $\epsilon>0$. Using (\ref{bilipschitz}) we choose some $\delta>0$, such that
\begin{equation*}
|f_{\mu}(y)-f_{\mu}(0)|<\epsilon,\:\:\:\:\text{for all}\:\:\:y\in B(0,\delta).
\end{equation*}
Hence, \begin{eqnarray*}
|f_{\mu}\ast\phi_r(0)-f_{\mu}(0)|&=&\left|\int_{G}f_{\mu}(\xi)\phi_r(\xi^{-1})\:dm(\xi)-\int_{G}f_{\mu}(0)\phi_r(\xi^{-1})\:dm(\xi)\right|\\&\leq&\frac{1}{m(B(0,r))}\int_{B(0,r)}|f_{\mu}(\xi)-f_{\mu}(0)|\:dm(\xi)\\&<&\epsilon,\:\:\:\:\text{for all}\:\:0<r<\delta.
\end{eqnarray*}
This together with (\ref{fmuphi}) and (\ref{fmudefinition}), implies that \begin{equation*}
f\ast(\mu\ast\phi_r)(0)\to f_{\mu}(0)=\int_{G}f(\xi^{-1})\:d\mu(\xi),  \:\:\:\:\text{as}\:\:r\to 0.
\end{equation*} 
Similarly, we can prove that \begin{equation*}
f\ast(\nu\ast\phi_r)(0)\to f_{\nu}(0)=\int_{G}f(\xi^{-1})\:d\nu(\xi),  \:\:\:\:\text{as}\:\:r\to 0,
\end{equation*}
where $f_{\nu}$ is defined according to (\ref{fmudefinition}). Equation (\ref{muphiequalnuphi}) now shows that \begin{equation*}
\int_{G}f(\xi^{-1})\:d\mu(\xi)=\int_{G}f(\xi^{-1})\:d\nu(\xi).
\end{equation*}
This completes the proof.
\end{proof}
We now use this proposition to prove the following measure theoretic result that will be needed in the proof of our main theorem.
\begin{lem}\label{mthc}
Suppose $\{\mu_j\}_{j\geq 1}$, $\mu$  are positive measures on $G$ and $\{\mu_j\}$ converges to $\mu$ in weak*. Then for some $L\in[0,\infty)$, $\mu=Lm$ if and only if $\{\mu_j(B)\}$ converges to $Lm(B)$ for every $d_{\mathcal{L}}$-ball $B\subset G$.
\end{lem}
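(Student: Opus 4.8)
The plan is to prove the two implications separately, in each case reducing the statement to the behaviour of $\mu_j$ and $\mu$ on $d_{\mathcal{L}}$-balls and invoking the portmanteau-type consequences of weak$^*$ convergence. The one geometric fact I would isolate first, and then use in both directions, is that every $d_{\mathcal{L}}$-sphere is $m$-null, i.e. $m(\partial B)=0$ for every $d_{\mathcal{L}}$-ball $B$. By left-invariance of $m$ it suffices to check balls centred at $0$, and then the polar-coordinate formula (\ref{polarcordinate}) gives $m(\{x:d_{\mathcal{L}}(x)=s\})=\int_0^\infty\int_S\chi_{\{r=s\}}\,r^{Q-1}\,d\sigma(\omega)\,dr=0$. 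In particular $m(\overline{B})=m(B)$ for every ball, so $s\mapsto m(B(x_0,s))$ is continuous, which is precisely what makes the squeezing arguments below close up. I would also record that $\overline{B}$ is compact by Remark \ref{compact}, so Urysohn's lemma supplies the cut-off functions I need, and that $\mu_j,\mu$ are regular, exactly as used in the proof of Proposition \ref{measuresequal}.

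For the forward implication, assume $\mu=Lm$ and fix $B=B(x_0,s)$. For the upper estimate I would choose, for small $\eta>0$, a function $\psi\in C_c(G)$ with $\chi_{\overline{B}}\le\psi\le 1$ and $\operatorname{supp}\psi\subset B(x_0,s+\eta)$; weak$^*$ convergence then gives $\limsup_j\mu_j(B)\le\lim_j\int_G\psi\,d\mu_j=L\int_G\psi\,dm\le Lm(B(x_0,s+\eta))$, and letting $\eta\to0$ with $m(\partial B)=0$ yields $\limsup_j\mu_j(B)\le Lm(B)$. Symmetrically, picking $\psi\in C_c(G)$ with $\chi_{\overline{B(x_0,s-\eta)}}\le\psi\le\chi_B$ gives $\liminf_j\mu_j(B)\ge L\int_G\psi\,dm\ge Lm(B(x_0,s-\eta))\to Lm(B)$. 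Hence $\mu_j(B)\to Lm(B)$ for every ball.

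For the converse, assume $\mu_j(B)\to Lm(B)$ for every $d_{\mathcal{L}}$-ball $B$; by Proposition \ref{measuresequal} it suffices to prove $\mu(B)=Lm(B)$ for every open ball $B$. Here I would use the two standard semicontinuity consequences of weak$^*$ convergence of positive Radon measures: $\mu(U)\le\liminf_j\mu_j(U)$ for open $U$ (integrate $\psi\in C_c(U)$ with $0\le\psi\le 1$ and take the supremum over such $\psi$ by inner regularity), and $\limsup_j\mu_j(K)\le\mu(K)$ for compact $K$ (integrate cut-offs $\chi_K\le\psi\le 1$ and use outer regularity). The first, applied to $B=B(x_0,s)$, gives $\mu(B)\le\liminf_j\mu_j(B)=Lm(B)$. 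The second, applied to $K=\overline{B(x_0,s-\eta)}\subset B$, gives $\mu(B)\ge\mu(K)\ge\limsup_j\mu_j(K)\ge\limsup_j\mu_j(B(x_0,s-\eta))=Lm(B(x_0,s-\eta))$, which tends to $Lm(B)$ as $\eta\to0$. Thus $\mu(B)=Lm(B)$ for every open ball, and Proposition \ref{measuresequal} forces $\mu=Lm$.

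I expect the main subtlety to lie not in any individual estimate but in the bookkeeping of the open-versus-closed ball distinction: the squeezes only work because $s\mapsto m(B(x_0,s))$ is continuous, which is exactly the content of the null-sphere fact. Establishing $m(\partial B)=0$ cleanly from (\ref{polarcordinate}) is the one place where the homogeneous structure is genuinely used, and it is worth stressing that the corresponding continuity of $s\mapsto\mu(B(x_0,s))$ is available only a posteriori, after equality on balls has been proven.
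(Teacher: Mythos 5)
Your proof is correct and follows essentially the same route as the paper's: sandwich each $d_{\mathcal{L}}$-ball between Urysohn cut-offs attached to slightly larger and smaller sets, pass to the limit using weak* convergence, and reduce the converse to Proposition \ref{measuresequal}. The only cosmetic differences are that the paper obtains the forward-direction squeeze from regularity of $m$ (an open $V\supset\overline{B}$ and a compact $K\subset B$ of nearly equal measure) rather than from your null-sphere observation, and in the converse it uses the explicit formula $m(B(x_0,r))=m(B(0,1))r^Q$ --- which also yields the continuity in $r$ that you need --- in place of the abstract portmanteau inequalities.
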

\begin{proof}
Suppose $\mu=Lm$. Fix a $d_{\mathcal{L}}$-ball $B\subset G$ and $\epsilon>0$. As $\overline{B}$ is compact with respect to the Euclidean topology, by regularity of the Lebesgue measure $m$, there exists an open set $V\supset\overline{B}$ such that $m(V\setminus\overline{B})<\epsilon$. Using Uryshon's lemma \cite[Theorem 2.12]{Rub}, we choose $\psi\in C_c(G)$ such that 
\begin{equation*}
0\leq\psi(x)\leq 1,\:\:\text{for all}\:x\in G;\: \psi\equiv1\:\text{on}\:\overline{B};\:\:\psi\equiv0\:\text{on}\:G\setminus V. 
\end{equation*}
Then
\begin{equation}\label{regular1}
\int_{G}\psi\:dm=\int_{B}\psi\:dm+\int_{V\setminus\overline{B}}\psi\:dm\leq m(B)+m(V\setminus\overline{B})\leq m(B)+\epsilon.
\end{equation}
As $\psi\equiv1\:\text{on}\:\overline{B}$ and $\mu_j\to\mu$ in weak*,
\begin{equation*}
\limsup_{j\to\infty}\mu_j(B)=\limsup_{j\to\infty}\int_{B}\psi\:d\mu_j\leq\limsup_{j\to\infty}\int_{G}\psi\:d\mu_j=\int_{G}\psi\:d\mu.
\end{equation*}
Using our assumption, that is, $\mu=Lm$ and (\ref{regular1}) in the above, we get
\begin{equation*}
\limsup_{j\to\infty}\mu_j(B)\leq L\int_{G}\psi\:dm\leq L(m(B)+\epsilon)
\end{equation*}
 Since $\epsilon>0$ is arbitrary
\begin{equation}\label{limsup}
\limsup_{j\to\infty}\mu_j(B)\leq Lm(B).
\end{equation}
Similarly, by choosing a compact set $K\subset B$ with
\begin{equation*}
 m(K)>m(B)-\epsilon\:\:\:\:\:(\text{using  Remark \ref{topology}})
\end{equation*}
 and a function $g\in C_c(G)$ such that 
\begin{equation*}
0\leq g(x)\leq 1,\:\:\text{for all}\:x\in G;\: g\equiv1\:\text{on}\:K;\:\:g\equiv0\:\text{on}\:G\setminus B,
\end{equation*} we observe that
\begin{equation*}
\int_{G}g\:dm\geq \int_{K}g\:dm=m(K)>m(B)-\epsilon.
\end{equation*}
As $0\leq g\leq1$ with $\text{supp}\:g\subset B$ and $\mu_j\to\mu$ in weak*,
\begin{equation*}
\liminf_{j\to\infty}\mu_j(B)\geq\liminf_{j\to\infty}\int_{G}g\:d\mu_j=\int_{G}g\:d\mu=L\int_{G}g\:dm>L(m(B)-\epsilon).
\end{equation*}
Since $\epsilon>0$ is arbitrary 
\begin{equation*}
\liminf_{j\to\infty}\mu_j(B)\geq Lm(B).
\end{equation*}
Combining the above inequality with (\ref{limsup}) we conclude that
\begin{equation*}
\lim_{j\to\infty}\mu_j(B)=Lm(B).
\end{equation*}
Conversely, we suppose that
\begin{equation}\label{limmuj}
\lim_{j\to\infty}\mu_j(B)=Lm(B),
\end{equation} 
for every $d_{\mathcal{L}}$-ball $B\subset G$. We need to prove that $\mu=Lm$. In view of Proposition \ref{measuresequal}, it suffices to show that $\mu(B)=Lm(B)$, for every $d_{\mathcal{L}}$-ball $B\subset G$. The proof of this part is similar to that of the previous part. We fix $\epsilon>0$ and a $d_{\mathcal{L}}$-ball $B=B(x_0,r)\subset G$. We denote the $d_{\mathcal{L}}$-ball centred at $x_0$ and radius $r+\epsilon$ by $B^{\prime}$. Taking Remark \ref{compact} into account and applying Uryshon's lemma we get a function $f\in C_c(G)$ such that 
\begin{equation*}
0\leq f(x)\leq 1,\:\:\text{for all}\:x\in G;\: f\equiv1\:\text{on}\:\overline{B};\:\:f\equiv0\:\text{on}\:G\setminus B^{\prime}.
\end{equation*}
Using our hypothesis, namely $\mu_j\to\mu$ in weak*, the above implies that 
\begin{equation*}
\mu(B)=\int_{B}f\:d\mu\leq\int_{G}f\:d\mu=\lim_{j\to\infty}\int_{G}f\:d\mu_j\leq\lim_{j\to\infty}\mu_j(B^{\prime})=Lm(B^{\prime})=Lm(B(0,1))(r+\epsilon)^Q.
\end{equation*}
Since $\epsilon>0$ is arbitrary,
\begin{equation*}
\mu(B)\leq Lm(B(0,1))r^Q=Lm(B).
\end{equation*} 
Similarly, letting $B^{\prime\prime}=B(x_0,r-\epsilon)$ and choosing a function $f_1\in C_c(G)$ such that 
\begin{equation*}
0\leq f_1(x)\leq 1,\:\:\text{for all}\:x\in G;\: f_1\equiv1\:\text{on}\:B^{\prime\prime};\:\:f_1\equiv0\:\text{on}\:G\setminus B,
\end{equation*}
we obtain
\begin{equation*}
\mu(B)\geq\int_{G}f_1\:d\mu=\lim_{j\to\infty}\int_{G}f_1\:d\mu_j\geq\liminf_{j\to\infty}\int_{B^{\prime\prime}}f_1\:d\mu_j=\liminf_{j\to\infty}\mu_j(B^{\prime\prime}).
\end{equation*}
Consequently, (\ref{limmuj}) gives
\begin{equation*}
\mu(B)\geq Lm(B^{\prime\prime})=Lm(B(0,1))(r-\epsilon)^Q.
\end{equation*}
As $\epsilon>0$ is arbitrary, 
\begin{equation*}
\mu(B)\geq Lm(B).
\end{equation*}
This completes the proof.
\end{proof}
Next, we shall consider various types of maximal functions on $G$. For a measureable function $\phi$ defined on $G$ and a complex or a signed measure $\mu$, we define the $\alpha$-nontangential maximal function $M_{\phi}^{\alpha}\mu$, where $\alpha>0$ and the radial maximal function $M_{\phi}^0\mu$ of $\mu$ with respect to $\phi$ 
\begin{eqnarray*}
&&M_{\phi}^{\alpha}\mu(x)=\sup_{\substack{(\xi,t)\in G\times(0,\infty)\\d_{\mathcal{L}}(x,\xi)<\alpha t}}|\mu\ast\phi_t(\xi)|,\:x\in G,\\&&M_{\phi}^0\mu(x)=\sup_{0< t<\infty}|\mu\ast\phi_t(\xi)|,\:x\in G.
\end{eqnarray*}
It is obvious that $M_{\phi}^0\mu$ is pointwise dominated by $M_{\phi}^{\alpha}\mu$ for all $\alpha>0$. In \cite[Corollary 2.5]{FS}, it was proved that if $\phi$ satisfies some polynomial decay, namely
\begin{equation*}
|\phi(x)|\leq A(1+d_{\mathcal{L}}(x))^{-\lambda}
\end{equation*}
for some $A>0$ and $\lambda>Q$, then $M_{\phi}^{\alpha}:L^1(G)\to L^{1,\infty}(G)$ and $M_{\phi}^{\alpha}:L^p(G)\to L^{p}(G)$, $1< p\leq\infty$. Although Folland-Stein proved these mapping properties of $M_{\phi}^{\alpha}$ for $\alpha=1$ but their proof works for all $\alpha>0$. An important special case of this type of maximal functions is the centred Hardy-Littlewood maximal function, which is obtained by taking $\phi=\chi_{B(0,1)}$ in $M_{\phi}^0\mu$. We shall denote it by $M_{HL}(\mu)$. In other words,
\begin{equation*}
M_{HL}(\mu)(x)=\sup_{r>0}\frac{|\mu(B(x,r))|}{m(B(x,r))},\:\:\:\:x\in G.
\end{equation*}
In the following, we shall prove a lemma regarding pointwise comparison between the centred Hardy-Littlewood maximal function and other maximal functions introduced above. We then use it to prove the corresponding result for heat maximal functions. 
\begin{lem}\label{maximal}
Let $\phi:G\to(0,\infty)$ be a $\mathcal{L}$-radial, $\mathcal{L}$-radially decreasing (see (\ref{lradial}), (\ref{ldec})) and integrable function. If $\mu$ is a positive measure on $G$ and $\alpha>0$ then there exist positive constants $c_{\alpha,\phi}$ and $c_{\phi}$ such that
\begin{equation*}
c_{\phi}M_{HL}(\mu)(x_0)\leq M_{\phi}^0\mu(x_0)\leq M_{\phi}^{\alpha}\mu(x_0)\leq c_{\alpha,\phi}M_{HL}(\mu)(x_0),
\end{equation*}  
for all $x_0\in G$. The constants $c_{\phi}$ and $c_{\alpha,\phi}$ are independent of $x_0$.
\end{lem}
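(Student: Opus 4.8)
The statement asserts a three-way comparison, and the middle inequality $M_\phi^0\mu(x_0)\le M_\phi^\alpha\mu(x_0)$ is immediate from the definitions, since the supremum defining $M_\phi^0\mu$ runs over a subfamily (those with $\xi=x_0$, so $d_{\mathcal{L}}(x_0,\xi)=0<\alpha t$) of the supremum defining $M_\phi^\alpha\mu$. So the real content is the two outer inequalities. The overall strategy is to compare the smooth radially decreasing weight $\phi$ against indicator functions of $d_{\mathcal{L}}$-balls, turning convolution bounds into statements about $\mu$-mass of balls, and then to exploit the homogeneity $m(B(x,s))=m(B(0,1))s^Q$ together with the quasi-triangle inequality for $d_{\mathcal{L}}$.

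For the lower bound $c_\phi M_{HL}(\mu)(x_0)\le M_\phi^0\mu(x_0)$, the plan is to bound $\phi$ from below by a multiple of a single indicator. Since $\phi$ is strictly positive, continuous, and $\mathcal{L}$-radially decreasing, it is bounded below by some constant $a>0$ on the closed ball $\overline{B(0,1)}$ (compact by Remark 2.9). Then for any $s>0$, writing $t=s$ in the radial maximal function and using $\phi\big(\delta_{1/t}(\xi^{-1}\circ x_0)\big)\ge a$ whenever $\xi\in B(x_0,s)$ (because then $d_{\mathcal{L}}(\delta_{1/t}(\xi^{-1}\circ x_0))=s^{-1}d_{\mathcal{L}}(\xi^{-1}\circ x_0)<1$), I get
\begin{equation*}
\mu\ast\phi_s(x_0)=s^{-Q}\int_G\phi\big(\delta_{1/s}(\xi^{-1}\circ x_0)\big)\,d\mu(\xi)\ge a\,s^{-Q}\mu(B(x_0,s))=\frac{a}{m(B(0,1))}\cdot\frac{\mu(B(x_0,s))}{m(B(x_0,s))}.
\end{equation*}
Taking the supremum over $s>0$ yields the lower bound with $c_\phi=a/m(B(0,1))$.

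The substantive step is the upper bound $M_\phi^\alpha\mu(x_0)\le c_{\alpha,\phi}M_{HL}(\mu)(x_0)$, and this is where I expect the main difficulty. Here I would fix $(\xi,t)$ with $d_{\mathcal{L}}(x_0,\xi)<\alpha t$ and estimate $\mu\ast\phi_t(\xi)$ by decomposing $G$ into the annuli $A_k=\{y:2^{k-1}t\le d_{\mathcal{L}}(\xi^{-1}\circ y)<2^k t\}$ for $k\ge 0$ together with the central ball $\{d_{\mathcal{L}}(\xi^{-1}\circ y)<t\}$. On each annulus $\phi\big(\delta_{1/t}(\cdot)\big)$ is controlled by its value at radius $2^{k-1}$, i.e.\ by $\phi(2^{k-1})$ in the $\mathcal{L}$-radial notation, using that $\phi$ is radially decreasing. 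This gives a bound of the form $t^{-Q}\sum_k \phi(2^{k-1})\,\mu\big(B(\xi,2^k t)\big)$. Each ball $B(\xi,2^k t)$ is contained in a centered ball $B(x_0,\,C_{\mathcal{L}}(2^k t+\alpha t))$ about $x_0$ by the quasi-triangle inequality, so $\mu\big(B(\xi,2^k t)\big)\le M_{HL}(\mu)(x_0)\,m\big(B(x_0,C_{\mathcal{L}}(2^k+\alpha)t)\big)=M_{HL}(\mu)(x_0)\,m(B(0,1))\,C_{\mathcal{L}}^Q(2^k+\alpha)^Q t^Q$. Substituting, the factors of $t$ cancel and I am left with a constant times $M_{HL}(\mu)(x_0)\sum_{k\ge 0}\phi(2^{k-1})(2^k+\alpha)^Q$.

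The crux, therefore, is to show this series converges, and this is the only place where integrability of $\phi$ is really used. The obstacle is translating "$\phi\in L^1(G)$" into summability of $\sum_k \phi(2^{k-1})2^{kQ}$. I would do this via the polar-coordinate formula (2.12): integrability gives $\int_0^\infty\phi(r)r^{Q-1}\,dr<\infty$, and since $\phi$ is radially decreasing, a standard dyadic estimate shows $\int_{2^{k-2}}^{2^{k-1}}\phi(r)r^{Q-1}\,dr\ge c\,\phi(2^{k-1})2^{kQ}$ for a dimensional constant $c$, whence $\sum_k\phi(2^{k-1})2^{kQ}$ is dominated by the finite integral. The extra $(1+\alpha 2^{-k})^Q\le(1+\alpha)^Q$ factor is harmless and just contributes to $c_{\alpha,\phi}$. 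Taking the supremum over all admissible $(\xi,t)$ then gives the claimed inequality with $c_{\alpha,\phi}$ independent of $x_0$, completing the proof.
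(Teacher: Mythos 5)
Your proposal is correct and follows essentially the same route as the paper: bound $\phi$ below by $\phi(1)$ on the unit ball for the left inequality, and for the right one use a dyadic annular decomposition, the quasi-triangle inequality to pass to balls centered at $x_0$, and the polar-coordinate formula to convert integrability of $\phi$ into summability of $\sum_k\phi(2^{k-1})2^{kQ}$. The only blemishes are cosmetic: your constant in the lower bound should read $a\,m(B(0,1))$ rather than $a/m(B(0,1))$, and your annuli for $k=0$ overlap the central ball, which is harmless since the integrand is nonnegative.
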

\begin{proof}
We have already observed that the second inequality is obvious. To prove the left-most inequality we take $t>0$ and note that
\begin{eqnarray*}
\mu\ast\phi_t(x_0)&\geq&\int_{B(x_0,t)}\phi_t(\xi^{-1}\circ x_0)\:d\mu(\xi)\\
&=&t^{-Q}\int_{B(x_0,t)}\phi\left(\delta_{\frac{1}{t}}(\xi^{-1}\circ x_0)\right)\:d\mu(\xi)\\
&\geq&t^{-Q}\int_{B(x_0,t)}\phi(1)\:d\mu(\xi)\:\:\:\left(\text{as}\:\:d_{\mathcal{L}}\left(\delta_{\frac{1}{t}}(\xi^{-1}\circ x_0\right)<1\right)\\
&=&\phi(1)m(B(0,1))\frac{\mu(B(x_0,t))}{m(B(x_0,t))}.
\end{eqnarray*}
Taking supremum over $t$ on both sides we get
\begin{equation}\label{phiradial}
c_{\phi}M_{HL}(\mu)(x_0)\leq M_{\phi}^0\mu(x_0),
\end{equation}
where $c_{\phi}=\phi(1)m(B(0,1))$. To prove the right-most inequality, we take $(\xi,t)\in G\times(0,\infty)$ such that $d_{\mathcal{L}}(x_0,\xi)<\alpha t$. Then,
\begin{eqnarray}
\nonumber\mu\ast\phi_t(\xi)&=&\int_{G}\phi_t(x^{-1}\circ \xi)\:d\mu(x)\\\nonumber
&=&t^{-Q}\int_{\{x:d_{\mathcal{L}}(x^{-1}\circ \xi)<\alpha t\}}\phi\left(\delta_{\frac{1}{t}}(x^{-1}\circ\xi)\right)\:d\mu(x)\\\nonumber
&&\:\:\:\:\:+t^{-Q}\sum_{j=1}^{\infty}\int_{\{x:2^{j-1}\alpha t\leq d_{\mathcal{L}}(x^{-1}\circ \xi)<2^j\alpha t\}}\phi\left(\delta_{\frac{1}{t}}(x^{-1}\circ\xi)\right)\:d\mu(x)\\\nonumber
&\leq&\phi(0)t^{-Q}\mu(B(\xi,\alpha t))+t^{-Q}\sum_{j=1}^{\infty}\int_{\{x:d_{\mathcal{L}}(x^{-1}\circ \xi)<2^j\alpha t\}}\phi(2^{j-1}\alpha)\:d\mu(x)\\\label{ntmax}
&=&\phi(0)t^{-Q}\mu(B(\xi,\alpha t))+t^{-Q}\sum_{j=1}^{\infty}\phi(2^{j-1}\alpha)\mu(B(\xi,2^j\alpha t))
\end{eqnarray}
By the triangle inequality,
\begin{equation*}
d_{\mathcal{L}}(x,x_0)\leq C_{\mathcal{L}}\left(d_{\mathcal{L}}(\xi,x)+d_{\mathcal{L}}(x_0,\xi)\right)\leq C_{\mathcal{L}}(\alpha t+\alpha t)=2C_{\mathcal{L}}\alpha t,
\end{equation*}
whenever $x\in B(\xi,\alpha t)$. Consequently, $B(\xi,\alpha t)\subset B(x_0,2C_{\mathcal{L}}\alpha t)$ and hence
\begin{equation}\label{mumonotone1}
\mu(B(\xi,\alpha t))\leq \mu(B(x_0,2C_{\mathcal{L}}\alpha t)).
\end{equation}
Similarly,
\begin{equation}\label{mumonotone2}
\mu(B(\xi,2^j\alpha t))\leq \mu(B(x_0,C_{\mathcal{L}}(2^j+1)\alpha t)).
\end{equation}
We now use the formula for integration in ``polar coordinates" given in (\ref{polarcordinate}) to get
\begin{equation}\label{polar}
\int_{G}\phi(x)\:dm(x)=\sigma(S)\int_{0}^{\infty}\phi(r)r^{Q-1}\:dr\:\:\:\:\:\:(\text{as $\phi$ is $\mathcal{L}$-radial}).
\end{equation} 
But $\phi$ is $\mathcal{L}$-radially decreasing and nonnegative. Therefore,
 \begin{eqnarray*}
\int_{\alpha}^{\infty}\phi(r)r^{Q-1}\:dr&=&\sum_{j=1}^{\infty}\int_{2^{j-1}\alpha}^{2^j\alpha}\phi(r)r^{Q-1}\:dr\geq\sum_{j=1}^{\infty}\phi(2^j\alpha)\int_{2^{j-1}\alpha}^{2^j\alpha}r^{Q-1}\:dr\\
&=&\frac{\alpha^Q}{Q}\sum_{j=1}^{\infty}\phi(2^j\alpha)(2^{jQ}-2^{(j-1)Q})=\frac{(2^Q-1)\alpha^Q}{2^{2Q}Q}\sum_{j=1}^{\infty}\phi(2^j\alpha)2^{(j+1)Q}.
\end{eqnarray*}
Equation (\ref{polar}) and integrability of $\phi$ now imply that
\begin{equation}\label{series}
\sum_{j=1}^{\infty}\phi(2^j\alpha)2^{(j+1)Q}<\infty.
\end{equation}
Let us get back to the inequality (\ref{ntmax}). We get the following by making use of (\ref{mumonotone1}) and (\ref{mumonotone2}) in (\ref{ntmax}).
\begin{eqnarray}
\nonumber\mu\ast\phi_t(\xi)&\leq&\phi(0)t^{-Q}\mu(B(x_0,2C_{\mathcal{L}}\alpha t))+t^{-Q}\sum_{j=1}^{\infty}\phi(2^{j-1}\alpha)\mu(B(x_0,(2^j+1)C_{\mathcal{L}}\alpha t))\\\nonumber&=&\phi(0)(2C_{\mathcal{L}}\alpha)^Qm(B(0,1))\frac{\mu(B(x_0,2C_{\mathcal{L}}\alpha t)}{m(B(x_0,2C_{\mathcal{L}}\alpha t)}\\
\nonumber&&\:\:\:\:\:\:+\sum_{j=1}^{\infty}\phi(2^{j-1}\alpha)m(B(0,1))(C_{\mathcal{L}}\alpha)^Q(2^j+1)^Q\frac{\mu(B(x_0,(2^j+1)C_{\mathcal{L}}\alpha t))}{m(B(x_0,(2^j+1)C_{\mathcal{L}}\alpha t))}\\
\nonumber&\leq&m(B(0,1))(C_{\mathcal{L}}\alpha)^Q\left(2\phi(0)+\sum_{j=1}^{\infty}\phi(2^{j-1}\alpha)2^{(j+1)Q}\right)M_{HL}(\mu)(x_0).
\end{eqnarray}
In view of (\ref{series}), we see that the series inside the bracket above is finite and so we can write
\begin{equation*}
\mu\ast\phi_t(\xi)\leq c_{\alpha,\phi}M_{HL}(\mu)(x_0),\:\:\:\text{where}
\end{equation*}
 \begin{equation*}
 c_{\alpha,\phi}=m(B(0,1))(C_{\mathcal{L}}\alpha)^Q\left(2\phi(0)+\sum_{j=1}^{\infty}\phi(2^{j-1}\alpha)2^{(j+1)Q}\right)<\infty.
 \end{equation*}
  Taking supremum over all $(\xi,t)\in G\times(0,\infty)$ with $d_{\mathcal{L}}(x_0,\xi)<\alpha t$, we obtain
\begin{equation}\label{nontangentialmax}
M_{\phi}^{\alpha}\mu(x_0)\leq c_{\alpha,\phi}M_{HL}(\mu)(x_0).
\end{equation}
\end{proof}
\begin{thm}\label{heatmaximal}
Let $\mu\in M$ be a positive measure and let $x_0\in G$. Then for each $\alpha>0$, there exists positive constants $c_n$ and $c_{\alpha}$ (independent of $x_0$) such that 
\begin{equation}\label{heatmaxineq}
c_nM_{HL}(\mu)(x_0)\leq\sup_{t>0}\Gamma\mu(x_0,t^2)\leq\sup_{(x,t)\in \texttt{P}(x_0,\alpha)}\Gamma\mu(x,t)\leq c_{\alpha}M_{HL}(\mu)(x_0).
\end{equation}
\end{thm}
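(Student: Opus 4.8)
The plan is to observe that both heat maximal functions appearing in \eqref{heatmaxineq} are, after the obvious change of scale, precisely the radial and $\alpha$-nontangential maximal functions of $\mu$ taken with respect to the kernel $\gamma$, and then to read off the whole chain of inequalities from Lemma \ref{maximal}. Concretely, by \eqref{gammaconv} we have $\Gamma\mu(x_0,t^2)=\mu\ast\gamma_t(x_0)$, so that
\[
\sup_{t>0}\Gamma\mu(x_0,t^2)=\sup_{t>0}\mu\ast\gamma_t(x_0)=M_{\gamma}^0\mu(x_0).
\]
Likewise, a pair $(x,t)\in\texttt{P}(x_0,\alpha)$ is exactly a pair with $d_{\mathcal{L}}(x_0,x)<\alpha\sqrt{t}$, and setting $s=\sqrt{t}$ turns $\Gamma\mu(x,t)=\mu\ast\gamma_s(x)$ into the nontangential supremum over $d_{\mathcal{L}}(x_0,x)<\alpha s$, giving
\[
\sup_{(x,t)\in\texttt{P}(x_0,\alpha)}\Gamma\mu(x,t)=M_{\gamma}^{\alpha}\mu(x_0).
\]
Thus \eqref{heatmaxineq} reduces to controlling $M_{\gamma}^0$ and $M_{\gamma}^{\alpha}$ above and below by $M_{HL}$.

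The main obstacle is that Lemma \ref{maximal} cannot be applied with $\phi=\gamma$, since $\gamma$ is not known to be $\mathcal{L}$-radial nor $\mathcal{L}$-radially decreasing; only its Gaussian bounds have that structure. I would circumvent this by sandwiching $\gamma$ between two genuine Gaussians of $d_{\mathcal{L}}$. Evaluating the Gaussian estimate \eqref{gaussian} at $t=1$, the functions
\[
\phi^-(x)=c_0^{-1}\exp\!\left(-c_0\,d_{\mathcal{L}}(x)^2\right),\qquad \phi^+(x)=c_0\exp\!\left(-\frac{d_{\mathcal{L}}(x)^2}{c_0}\right)
\]
satisfy $\phi^-(x)\le\gamma(x)\le\phi^+(x)$ for all $x\in G$, and each maps $G$ into $(0,\infty)$ and is $\mathcal{L}$-radial, $\mathcal{L}$-radially decreasing, and integrable, so that Lemma \ref{maximal} applies to both of them. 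Verifying these hypotheses (in particular strict positivity and integrability via the polar-coordinate formula) is the one place requiring care.

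The remaining steps are purely monotonicity. Dilating, $\phi^{\pm}_s(x)=s^{-Q}\phi^{\pm}(\delta_{1/s}(x))$, so evaluating $\phi^-\le\gamma\le\phi^+$ at the point $\delta_{1/s}(x)$ gives $\phi^-_s(x)\le\gamma_s(x)\le\phi^+_s(x)$ for every $s>0$ and $x\in G$; since $\mu$ is nonnegative, convolution preserves these pointwise bounds, and taking the appropriate suprema yields
\[
M_{\phi^-}^0\mu(x_0)\le M_{\gamma}^0\mu(x_0)\le M_{\gamma}^{\alpha}\mu(x_0)\le M_{\phi^+}^{\alpha}\mu(x_0).
\]
Finally, Lemma \ref{maximal} applied to $\phi^-$ supplies $c_{\phi^-}M_{HL}(\mu)(x_0)\le M_{\phi^-}^0\mu(x_0)$, and applied to $\phi^+$ supplies $M_{\phi^+}^{\alpha}\mu(x_0)\le c_{\alpha,\phi^+}M_{HL}(\mu)(x_0)$, with constants independent of $x_0$. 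Chaining these with the display above and the two scale identities produces \eqref{heatmaxineq} with $c_n=c_{\phi^-}$ and $c_{\alpha}=c_{\alpha,\phi^+}$; the hypothesis $\mu\in M$ only serves to guarantee that the quantities $\Gamma\mu(x,t)$ are finite, the inequalities otherwise being valid in $[0,\infty]$.
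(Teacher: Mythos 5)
Your proposal is correct and follows essentially the same route as the paper: the middle inequality is the trivial inclusion of the radial ray in the parabolic region, and the outer two inequalities come from sandwiching $\gamma$ between the two $\mathcal{L}$-radial Gaussians $c_0^{-1}\exp(-c_0 d_{\mathcal{L}}(x)^2)$ and $c_0\exp(-d_{\mathcal{L}}(x)^2/c_0)$ supplied by the estimate (\ref{gaussian}) and then invoking Lemma \ref{maximal} for each. The paper uses exactly these two comparison functions, so there is nothing further to reconcile.
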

\begin{proof}
The second inequality is trivial as $\{(x_0,t^2):t>0\}\subset\texttt{P}(x_0,\beta)$ for every $\beta>0$. To prove the first inequality, we take 
\begin{equation*}
\phi(x)=c_0^{-1}\exp\left(-c_0d_{\mathcal{L}}(x)^2\right),\:\:\:x\in G.
\end{equation*}
Clearly, $\phi$ satisfies the hypothesis of Lemma \ref{maximal}. By the first part of the Gaussian estimate (\ref{gaussian}), we have 
\begin{equation*}
\mu\ast\phi_t(x)\leq\Gamma\mu(x,t^2),\:\:\:\text{for all}\:(x,t)\in G\times(0,\infty).
\end{equation*}
Applying first inequality of Lemma \ref{maximal}, we obtain 
\begin{equation*}
c_nM_{HL}(\mu)(x_0)\leq\sup_{t>0}\mu\ast\phi_t(x_0)\leq\sup_{t>0}\Gamma\mu(x_0,t^2),
\end{equation*} 
for some constant positive $c_n$, independent of $x_0$. On the other hand, we consider
\begin{equation*}
\psi(x)=c_0\exp\left(-\frac{d_{\mathcal{L}}(x)^2}{c_0}\right),\:\:\:x\in G.
\end{equation*}
It is obvious that $\psi$ satisfies the hypothesis of Lemma \ref{maximal}. Moreover, by the last part of the Gaussian estimate (\ref{gaussian}), we have 
\begin{equation}\label{max1}
\Gamma\mu(x,t)\leq\mu\ast\psi_{\sqrt{t}}(x),\:\:\:\text{for all}\:(x,t)\in G\times(0,\infty).
\end{equation}
But the last inequality of Lemma \ref{maximal} gives us
\begin{equation*}
\sup_{\substack{(\xi,t)\in G\times(0,\infty)\\d_{\mathcal{L}}(x_0,\xi)<\alpha \sqrt{t}}}\mu\ast\psi_{\sqrt{t}}(\xi)\leq c_{\alpha}M_{HL}(\mu)(x_0),
\end{equation*}
for some positive constant $c_{\alpha}$, independent of $x_0$. Using (\ref{max1}) and recalling the definition of $\texttt{P}(x_0,\alpha)$ (see Definition \ref{impdefnc}, i)), we note that 
\begin{equation*}
\sup_{(\xi,t)\in \texttt{P}(x_0,\alpha)}\Gamma\mu(\xi,t)=\sup_{\substack{(\xi,t)\in G\times(0,\infty)\\d_{\mathcal{L}}(x_0,\xi)<\alpha \sqrt{t}}}\Gamma\mu(\xi,t)\leq\sup_{\substack{(\xi,t)\in G\times(0,\infty)\\d_{\mathcal{L}}(x_0,\xi)<\alpha \sqrt{t}}}\mu\ast\psi_{\sqrt{t}}(\xi)
\end{equation*}
Hence,
\begin{equation*}
\sup_{(\xi,t)\in \texttt{P}(x_0,\alpha)}\Gamma\mu(\xi,t)\leq c_{\alpha}M_{HL}(\mu)(x_0).
\end{equation*} This completes the proof.
\end{proof}
To prove our main result we will also need an analogue of Montel's theorem for solutions of the heat equation (\ref{heateq}). We have already observed that the heat operator $\mathcal{H}=\frac{\partial}{\partial t}-\mathcal{L}$ is hypoelliptic on $G\times(0,\infty)$. Using this hypoellipticty, one can get a Montel-type result for solutions of the heat equation (\ref{heateq}) from a very general theorem proved in \cite[Theorem 4]{B}.
\begin{lem}\label{montelc}
Let $\{u_j\}$ be a sequence of solutions of the heat equation (\ref{heateq}) on $G\times(0,\infty)$. If $\{u_j\}$ is locally bounded then it has a subsequence which converges normally to a function $v$, defined on $G\times(0,\infty)$, which is also a solution of the heat equation (\ref{heateq}).
\end{lem}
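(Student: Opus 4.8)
The plan is to derive the statement directly from the hypoellipticity of the heat operator $\mathcal{H}$ on $G\times(0,\infty)$, which was already recorded via H\"ormander's theorem \cite{H}. In fact the assertion is a special case of the general Montel-type theorem of Bar \cite[Theorem 4]{B}: once one verifies that $\mathcal{H}$ is hypoelliptic---which we have---the conclusion follows. I will nonetheless outline the underlying mechanism, as it explains why mere local boundedness suffices to extract a normally convergent subsequence whose limit is again a solution.

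First I would convert local boundedness into equicontinuity on compact sets using interior regularity. Hypoellipticity of $\mathcal{H}$ provides a priori estimates of the following kind: for relatively compact open sets $\Omega'\Subset\Omega\Subset G\times(0,\infty)$ and any left-invariant differential operator $D$ formed from $X_1,\dots,X_{N_1}$ and $\partial_t$, there is a constant $C=C(D,\Omega',\Omega)$ with $\sup_{\Omega'}|Du|\le C\,\|u\|_{L^\infty(\Omega)}$ for every solution $u$ of $\mathcal{H}u=0$ on $\Omega$. Applying this to finitely many such $D$---enough to dominate the full Euclidean gradient, which is legitimate because $\{X_1,\dots,X_{N_1}\}$ generate $\g$ and so their iterated brackets recover a basis of left-invariant fields---shows that the uniformly bounded family $\{u_j\}$ has uniformly bounded first-order Euclidean derivatives on each compact set, hence is equicontinuous there in the Euclidean topology.

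Next I would apply Arzel\`a--Ascoli together with a diagonal argument. Exhausting $G\times(0,\infty)$ by an increasing sequence of compact sets $K_1\subset K_2\subset\cdots$, on each $K_i$ the family is uniformly bounded and equicontinuous, so some subsequence converges uniformly on $K_i$; a diagonal extraction then yields a single subsequence $\{u_{j_k}\}$ converging uniformly on every $K_i$, that is, converging normally (Definition \ref{impdefnc}, iii) to a continuous function $v$ on $G\times(0,\infty)$.

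Finally I would check that $v$ solves \eqref{heateq}. Normal convergence gives $u_{j_k}\to v$ in $L^1_{\mathrm{loc}}$ and hence in the sense of distributions; since each $\mathcal{H}u_{j_k}=0$ and $\mathcal{H}$ is continuous on distributions, the limit satisfies $\mathcal{H}v=0$ in the distributional sense, and hypoellipticity upgrades $v$ to a smooth, genuine solution of \eqref{heateq}. The crux of the argument is the interior derivative estimate of the first step: in the Euclidean setting this is classical parabolic interior regularity, whereas here it rests on the subelliptic regularity theory for H\"ormander-type operators---precisely the input packaged in \cite[Theorem 4]{B}. Once that estimate is available, the remaining steps are the routine normal-families argument.
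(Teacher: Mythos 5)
Your proposal is correct and follows exactly the route the paper takes: the paper's entire ``proof'' consists of noting that $\mathcal{H}$ is hypoelliptic (via H\"ormander's theorem) and invoking Bar's general Montel-type theorem \cite[Theorem 4]{B}. Your additional sketch of the underlying mechanism (interior a priori estimates from hypoellipticity, Arzel\`a--Ascoli with a diagonal extraction, and passage to the limit in the sense of distributions followed by hypoelliptic regularity of the limit) is a correct account of what that citation packages, so there is nothing to correct.
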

We have already mentioned in the inroduction that the positive solutions of the classical heat equation on the Euclidean upper half space $\R^{n+1}_+$ are given by convolution of positive measures with the Euclidean heat kernel. In case of the heat equations on stratified Lie groups, we also have similar representation formula.
\begin{lem}\label{existence}(\cite[Lemma 2.3]{BU})
Let $u$ be a positive solution of the heat equation $\mathcal{H}u=0$ in the strip $G\times(0,T)$. Then, there exists a unique positive measure $\mu$ on $G$ such that
\begin{equation*}
u(x,t)=\int_G\Gamma(\xi^{-1}\circ x,t)\:d\mu(\xi),\:\:\:(x,t)\in G\times(0,T).
\end{equation*}
In this case we say that $\mu$ is the boundary measure of $u$.
\end{lem}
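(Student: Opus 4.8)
The plan is to build $\mu$ as a weak* limit of the time-slices of $u$ and to recover the representation from the Chapman--Kolmogorov (semigroup) structure of $\Gamma$, controlling the non-integrable kernel tails through the Gaussian estimates of Theorem \ref{fundamental}(v). For $0<s<T$ put $d\mu_s(\xi)=u(\xi,s)\,dm(\xi)$; this is a nonnegative measure because $u\geq 0$. The conceptual backbone is the reproduction identity
\begin{equation*}
u(x,t+s)=\int_G\Gamma(\xi^{-1}\circ x,t)\,u(\xi,s)\,dm(\xi)=\Gamma\mu_s(x,t),\qquad 0<s,\ 0<t<T-s.
\end{equation*}
To prove it I would fix $s$ and observe that $(x,t)\mapsto u(x,t+s)$ and $(x,t)\mapsto\Gamma\mu_s(x,t)$ are both nonnegative solutions of $\mathcal{H}=0$ (the latter by Corollary \ref{finiteonstrip}) which attain the same continuous initial value $u(\cdot,s)$ as $t\to 0$, the latter because $\Gamma(\cdot,t)$ is an approximate identity (Remark \ref{appcc}). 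The identity then follows from the uniqueness of the Cauchy problem for $\mathcal{H}$ with prescribed continuous initial value. This Tychonoff-type uniqueness, which rests on the same Gaussian bounds, is the external input supplied by \cite{BU}.

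The second step is a uniform mass bound. Fix $\tau_0\in(0,T)$ and evaluate the reproduction identity at $x=0$: for $0<s<T-\tau_0$,
\begin{equation*}
\int_G\Gamma(\xi^{-1},\tau_0)\,d\mu_s(\xi)=\Gamma\mu_s(0,\tau_0)=u(0,s+\tau_0)\longrightarrow u(0,\tau_0)<\infty\quad(s\to 0).
\end{equation*}
By the lower Gaussian estimate in (\ref{gaussian}) this bounds $\int_G\exp(-c_0 d_{\mathcal{L}}(\xi)^2/\tau_0)\,d\mu_s(\xi)$ uniformly, hence $\sup_s\mu_s(K)<\infty$ for every compact $K$. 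Therefore $\{\mu_s\}$ is relatively sequentially compact for the weak* topology of positive Radon measures on $G$, and I extract $s_j\to 0$ with $\mu_{s_j}\to\mu$ weak*. Lower semicontinuity of $\nu\mapsto\int g\,d\nu$ along weak* limits (for $g\geq 0$ continuous), applied to $\xi\mapsto\Gamma(\xi^{-1}\circ x,\tau_0)$, gives $\Gamma\mu(x,\tau_0)\leq u(x,\tau_0)<\infty$ for every $\tau_0<T$, so $\Gamma\mu$ is well defined on $G\times(0,T)$.

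The third and most delicate step is to pass to the limit in the reproduction identity, that is, to show $\Gamma\mu_{s_j}(x,t)\to\Gamma\mu(x,t)$, which combined with $u(x,t+s_j)\to u(x,t)$ yields $u=\Gamma\mu$. Because $\xi\mapsto\Gamma(\xi^{-1}\circ x,t)$ is not compactly supported, weak* convergence alone does not suffice, and this is where the main work lies. I would split $\int_G=\int_G\chi_R\,(\cdots)+\int_G(1-\chi_R)\,(\cdots)$ with $\chi_R\in C_c(G)$, $0\leq\chi_R\leq 1$, equal to $1$ on $B(x,R)$: the first integral converges by weak* convergence since its integrand lies in $C_c(G)$, while for the tail I bound $\Gamma(\xi^{-1}\circ x,t)$ by the upper Gaussian estimate and, exactly as in Proposition \ref{finiteconv}, use the reverse triangle inequality (\ref{revtri}) with $t$ small to dominate $\exp(-d_{\mathcal{L}}(\xi^{-1}\circ x)^2/(c_0 t))$ on $G\setminus B(x,R)$ by a multiple of $\exp(-c_0 d_{\mathcal{L}}(\xi)^2/\tau_0)$. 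The uniform bound of Step 2 (and its analogue for $\mu$) then makes both tails smaller than any prescribed $\varepsilon$ once $R$ is large, uniformly in $j$. Letting $j\to\infty$ and then $R\to\infty$ gives $u=\Gamma\mu$ on a sub-strip $G\times(0,\delta)$; the equality is then propagated to the full strip $G\times(0,T)$ by the semigroup property of $\Gamma$ (Theorem \ref{fundamental}(i)) together with the reproduction identity, feeding an intermediate time $\sigma<\delta$ at which $u(\cdot,\sigma)=\Gamma\mu(\cdot,\sigma)$ into both sides.

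Finally, for uniqueness suppose $\Gamma\mu=\Gamma\nu$ for positive measures $\mu,\nu$ with $\Gamma\mu,\Gamma\nu$ finite on $G\times(0,T)$. For $f\in C_c(G)$ and small $t$, the Fubini argument of Proposition \ref{fubinic} gives $\int_G\Gamma f(x,t)\,d\mu(x)=\int_G\Gamma f(x,t)\,d\nu(x)$; writing the integrands as $(\Gamma f(\cdot,t)/\gamma)\,\gamma$ and letting $t\to 0$ via Proposition \ref{unifc} (legitimate since $\int_G\gamma\,d\mu<\infty$, after rescaling the time variable if $T\leq 1$) yields $\int_G f\,d\mu=\int_G f\,d\nu$, whence $\mu=\nu$ by regularity. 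The two places I expect to demand the most care are the reproduction identity of Step 1, which hinges on uniqueness for the Cauchy problem, and the Gaussian tail control of Step 3.
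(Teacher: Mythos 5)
The paper does not actually prove this lemma: it is imported verbatim from \cite[Lemma 2.3]{BU}, and the only original content in the text is the remark that the proof extends from $T<\infty$ to $T=\infty$. So your proposal cannot be matched against an argument in the paper; it has to stand on its own, and measured that way it reproduces the standard architecture (time-slices, reproduction identity, uniform mass bound, weak* compactness, Gaussian tail control, uniqueness via the approximate identity) but has one genuine gap at its foundation. In Step 1 you assert that $(x,t)\mapsto\Gamma\mu_s(x,t)$ is a nonnegative solution ``by Corollary \ref{finiteonstrip}'', but that corollary only applies once you know that $\Gamma\mu_s(x_0,t_0)=\int_G\Gamma(\xi^{-1}\circ x_0,t_0)\,u(\xi,s)\,dm(\xi)$ is finite at some point, and this is precisely the nontrivial a priori estimate of the whole theorem: a nonnegative solution of the heat equation does not obviously have time-slices of Gaussian-integrable growth. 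This is where \cite{BU} brings in the parabolic Harnack inequality, which yields an estimate of the form $\int_G\Gamma(\xi^{-1}\circ x,t)\,u(\xi,s)\,dm(\xi)\leq C\,u(x,s+t)$ by an exhaustion and maximum-principle argument on bounded cylinders. Without some such input your reproduction identity cannot even be written down, and the Step 2 mass bound, which you derive from it, collapses with it.

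Everything downstream of that point is sound in outline. The weak* compactness, the lower semicontinuity giving $\Gamma\mu\leq u$, the tail control in Step 3 (which correctly mirrors the splitting of Proposition \ref{finiteconv} and uses the uniform bound from Step 2), and the uniqueness argument via Propositions \ref{fubinic} and \ref{unifc} are all workable; the uniqueness step is essentially the same computation the paper performs inside Lemma \ref{normalc}. You also rightly flag the Tychonoff--Widder uniqueness for the Cauchy problem as external input; that is acceptable for a cited lemma, but note that for unbounded continuous initial data this uniqueness itself holds only within a restricted class (nonnegative solutions, or solutions with Gaussian growth), so it and the missing finiteness estimate are two faces of the same Harnack-based machinery that your sketch presupposes rather than supplies.
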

Bonfiglioli-Uguzzoni proved the above Lemma under the implicit assumption that $T\in(0,\infty)$. But the same proof will work for the case $T=\infty$. This type of theorem has been known as Widder-type representation fromula in the literature.

Given a function $F$ on $G\times(0,\infty)$ and $r>0$, we define the parabolic dilation of $F$ as
\begin{equation}\label{dilatedfc}
F_r(x,t)=F(\delta_r(x),r^2t),\:(x,t)\in G\times(0,\infty).
\end{equation}
\begin{rem}\label{dilatefc}
The notion of parabolic dilation is crucial for us primarily because of the following reasons.
\begin{enumerate}
\item[i)] If $F$ is a solution of the heat equation then so is $F_r$
for every $r>0$. Indeed, $\mathcal{L}$ is homogeneous of degree two with respect to the dilations $\{\delta_s\}_{s>0}$. This implies that
\begin{equation*}
\left(\mathcal{L}-\frac{\partial}{\partial t}\right)F(\delta_r(x),r^2t)=r^2\mathcal{L}F(\delta_r(x),r^2t)-r^2\frac{\partial}{\partial t}F(\delta_r(x),r^2t)=0.
\end{equation*}
\item[ii)] $(x,t)\in \texttt{P}(0,\alpha)$ if and only if $(\delta_r(x),r^2t)\in \texttt{P}(0,\alpha)$ for every $r>0$. 
\end{enumerate}
\end {rem}
Given $\nu\in M$ and $r>0$, we also define the dilate $\nu_r$ of $\nu$ by
\begin{equation}\label{dilatemc}
\nu_r(E)=r^{-Q}\nu\left(\delta_r(E)\right),
\end{equation}
for every Borel set $E\subset G$. We now prove a simple lemma involving the above dilates which will be used in the proof of our main result.
\begin{lem}\label{dilatec}
If $\nu\in M$, then for every $r>0$,  $\Gamma(\nu_r)=(\Gamma\nu)_r$, that is, for all $(x,t)\in G\times(0,\infty)$,
\begin{equation*}
\Gamma(\nu_r)(x,t)=\Gamma\nu\left(\delta_r(x),r^2t\right).
\end{equation*}
\end{lem}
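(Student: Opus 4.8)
The plan is to reduce the claimed identity to the change-of-variables formula for the dilated measure $\nu_r$, combined with the homogeneity property (iii) of $\Gamma$ from Theorem \ref{fundamental}. First I would record the substitution rule for $\nu_r$. By the definition (\ref{dilatemc}) we have $\nu_r(E)=r^{-Q}\nu(\delta_r(E))$, and since $\eta\in\delta_r(E)$ exactly when $\delta_{1/r}(\eta)\in E$, testing against indicator functions gives $\int_G\chi_E\,d\nu_r=r^{-Q}\int_G\chi_E(\delta_{1/r}(\eta))\,d\nu(\eta)$. Extending this by linearity and the usual approximation by simple functions, one obtains, for every suitable $g$,
\[
\int_G g(\xi)\,d\nu_r(\xi)=r^{-Q}\int_G g\big(\delta_{1/r}(\eta)\big)\,d\nu(\eta).
\]
Since the total variation satisfies $|\nu_r|=(|\nu|)_r$, applying this rule to $g(\xi)=\Gamma(\xi^{-1}\circ x,t)>0$ and to $|\nu|$ shows that $\int_G\Gamma(\xi^{-1}\circ x,t)\,d|\nu_r|(\xi)$ equals $\Gamma|\nu|(\delta_r(x),r^2t)$, which is finite because $\nu\in M$. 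This guarantees absolute convergence of the integral defining $\Gamma(\nu_r)(x,t)$, so the substitution rule is legitimate for the signed (or complex) measure $\nu$ as well.

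Next I would carry out the substitution with $g(\xi)=\Gamma(\xi^{-1}\circ x,t)$, which yields
\[
\Gamma(\nu_r)(x,t)=r^{-Q}\int_G\Gamma\big((\delta_{1/r}(\eta))^{-1}\circ x,\,t\big)\,d\nu(\eta).
\]
The crucial algebraic point is that $\delta_{1/r}$ is a group automorphism, so $(\delta_{1/r}(\eta))^{-1}=\delta_{1/r}(\eta^{-1})$, and, using $\delta_{1/r}(\delta_r(x))=x$,
\[
\delta_{1/r}(\eta^{-1})\circ x=\delta_{1/r}(\eta^{-1})\circ\delta_{1/r}(\delta_r(x))=\delta_{1/r}\big(\eta^{-1}\circ\delta_r(x)\big).
\]
Writing $y=\eta^{-1}\circ\delta_r(x)$ and reading property (iii) of Theorem \ref{fundamental} with $r$ replaced by $1/r$ gives $\Gamma(\delta_{1/r}(y),t)=r^{Q}\Gamma(y,r^2t)$. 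Hence the factor $r^{-Q}$ cancels and the integrand becomes precisely $\Gamma(\eta^{-1}\circ\delta_r(x),r^2t)$, so that
\[
\Gamma(\nu_r)(x,t)=\int_G\Gamma\big(\eta^{-1}\circ\delta_r(x),\,r^2t\big)\,d\nu(\eta)=\Gamma\nu\big(\delta_r(x),r^2t\big),
\]
which is the asserted equality.

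I do not expect any genuine obstacle here: the statement is essentially a compatibility between the two scalings $\nu\mapsto\nu_r$ and $F\mapsto F_r$ that were designed to match. The only points requiring care are bookkeeping ones, namely the automorphism identity $(\delta_{1/r}(\eta))^{-1}=\delta_{1/r}(\eta^{-1})$ together with $\delta_{1/r}(\eta^{-1})\circ x=\delta_{1/r}(\eta^{-1}\circ\delta_r(x))$, and the justification of the change of variables for signed or complex $\nu$, which is handled by the absolute convergence established in the first step via $|\nu_r|=(|\nu|)_r$.
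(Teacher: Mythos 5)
Your proposal is correct and follows essentially the same route as the paper: establish the change-of-variables formula $\int_G g\,d\nu_r=r^{-Q}\int_G g(\delta_{1/r}(\eta))\,d\nu(\eta)$ via indicator functions, then apply it to $g(\xi)=\Gamma(\xi^{-1}\circ x,t)$ and invoke the automorphism identity together with the homogeneity property (iii) of Theorem \ref{fundamental} to cancel the factor $r^{-Q}$. Your extra remark justifying the substitution for signed or complex $\nu$ via $|\nu_r|=(|\nu|)_r$ is a minor point of added care beyond what the paper writes, but does not change the argument.
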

\begin{proof}
For $E\subset G$ a Borel set, using (\ref{dilatemc}) it follows that
\begin{equation*}
\int_{G}\chi_E\:d\nu_r=r^{-Q}\nu\left(\delta_r(E)\right)=r^{-Q}\int_{G}\chi_{\delta_r(E)}(x)\:d\nu(x)=r^{-Q}\int_{G}\chi_{E}\left(\delta_{r^{-1}}(x)\right)\:d\nu(x).
\end{equation*}
Hence, for all nonnegative measurable functions $f$ we have  
\begin{equation*}
\int_{G}f(x)\:d\nu_r(x)=r^{-Q}\int_{G}f\left(\delta_{r^{-1}}(x)\right)\:d\nu(x).
\end{equation*}
It now follows from the above relation and from Theorem \ref{fundamental}, (iii) that for all $(x,t)\in G\times(0,\infty)$,
\begin{eqnarray*}
\Gamma(\nu_r)(x,t)&=&\int_G\Gamma(\xi^{-1}\circ x,t)\:d\nu_r(\xi)\\
&=&r^{-Q}\int_G\Gamma\left(\left(\delta_{r^{-1}}(\xi)\right)^{-1}\circ x,t\right)\:d\nu(x)\\
&=&r^{-Q}\int_G\Gamma\left(\delta_{r^{-1}}\left(\xi^{-1}\circ\delta_r(x)\right),r^{-2}r^2t\right)\:d\nu(x)\\&=&r^{-Q}r^Q\int_G\Gamma(\xi^{-1}\circ \delta_r(x),r^2t)\:d\nu(x)\\&=&(\Gamma\nu)_r(x,t).
\end{eqnarray*} 
\end{proof}

\section{Main theorem}
We shall first prove a special case of our main result. The proof of the main result will follow by reducing matters to this special case.
\begin{thm}\label{specialthc}
Suppose $u$ is a positive solution of the heat equation (\ref{heateq}), that is,
\begin{equation*}
\mathcal{H}u(x,t)=0,\:(x,t)\in G\times(0,\infty),
\end{equation*}
and $L\in[0,\infty)$. If a finite measure $\mu$ is the boundary measure of $u$ then the following statements are equivalent.
\begin{enumerate}
\item[(i)] $u$ has parabolic limit $L$ at $0$.
\item[(ii)]$\mu$ has strong derivative $L$ at $0$.
\end{enumerate} 
\end{thm}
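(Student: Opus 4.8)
The plan is to recast both conditions in terms of the parabolic dilates $u_r(x,t)=u(\delta_r(x),r^2t)$ introduced in \eqref{dilatedfc} and the measure dilates $\mu_r$ from \eqref{dilatemc}, exploiting the identity $u_r=\Gamma(\mu_r)$ furnished by Lemma \ref{dilatec}. First I would record two elementary reformulations. Since $m(\delta_r(B))=r^Qm(B)$ and $\mu_r(B)=r^{-Q}\mu(\delta_r(B))$, condition (ii) is equivalent to $\mu_r(B)\to L\,m(B)$ as $r\to 0$ for every $d_{\mathcal{L}}$-ball $B$. On the other hand, because the ratio $d_{\mathcal{L}}(\delta_r(x))/\sqrt{r^2t}=d_{\mathcal{L}}(x)/\sqrt{t}$ is dilation-invariant, a compactness argument shows that condition (i) is equivalent to the normal convergence $u_r\to L$ on $G\times(0,\infty)$ as $r\to 0$: for a compact $K\subset G\times(0,\infty)$ the points $(\delta_r(x),r^2t)$ with $(x,t)\in K$ lie in a fixed parabolic region $\texttt{P}(0,\alpha)$ and tend to $(0,0)$, while conversely any parabolic approach to the origin rescales to the compact slice $\overline{B(0,\alpha)}\times\{1\}$.

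For the implication (i) $\Rightarrow$ (ii), normal convergence $u_r=\Gamma(\mu_r)\to L=\Gamma(Lm)$ (the latter by property (iv) of Theorem \ref{fundamental}) together with Lemma \ref{normalc}, valid since $\mu_r,Lm\in M$, yields $\mu_r\to Lm$ in weak$^*$ along every sequence $r_k\to 0$. The forward direction of Lemma \ref{mthc} then gives $\mu_{r_k}(B)\to L\,m(B)$ for every $d_{\mathcal{L}}$-ball $B$, which is exactly the reformulated (ii).

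The substantial direction is (ii) $\Rightarrow$ (i), where I would argue by normal families. The strong-derivative hypothesis forces $M_{HL}(\mu)(0)<\infty$, since the Hardy--Littlewood ratios converge to $L$ at scale $0$ and vanish at large scales because $\mu$ is finite; hence Theorem \ref{heatmaximal} bounds $\Gamma\mu$ on each $\texttt{P}(0,\alpha)$ and makes $\{u_r\}$ locally bounded on $G\times(0,\infty)$. Fixing an arbitrary sequence $r_k\to 0$, Lemma \ref{montelc} extracts a subsequence along which $u_{r_k}=\Gamma(\mu_{r_k})$ converges normally to a nonnegative solution $v$; by the Widder-type representation (Lemma \ref{existence}) we write $v=\Gamma\mu'$ with $\mu'$ a positive measure in $M$. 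Lemma \ref{normalc} gives $\mu_{r_k}\to\mu'$ in weak$^*$, while the reformulated hypothesis (ii) gives $\mu_{r_k}(B)\to L\,m(B)$ for every ball $B$; the converse direction of Lemma \ref{mthc} then identifies $\mu'=Lm$, so $v=\Gamma(Lm)\equiv L$. As every subsequence of $\{u_{r_k}\}$ admits a further subsequence converging normally to the constant $L$, the whole sequence does, and since $r_k\to 0$ was arbitrary we conclude $u_r\to L$ normally, i.e.\ (i).

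The main obstacle is this hard direction: one must control $\{u_r\}$ sharply enough to invoke the Montel-type theorem, which is precisely the role of the maximal estimate of Theorem \ref{heatmaximal}, and then pin down \emph{all} subsequential limits rather than just extract one. Uniqueness of the limit is the delicate point; it rests on the chain Lemma \ref{normalc} $\to$ Lemma \ref{mthc} $\to$ Proposition \ref{measuresequal}, that is, on the fact that a positive measure on $G$ is determined by its values on $d_{\mathcal{L}}$-balls. This is exactly what upgrades the single-point hypothesis (ii) into the statement that every normal limit of the dilates is the constant $L$, and not merely some solution whose boundary measure happens to have strong derivative $L$ at $0$.
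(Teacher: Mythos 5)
Your proof is correct, and it runs on the same machinery as the paper's: parabolic dilates $u_r=\Gamma(\mu_r)$ via Lemma \ref{dilatec}, the maximal estimate of Theorem \ref{heatmaximal}, the Montel-type Lemma \ref{montelc}, the Widder representation of Lemma \ref{existence}, and the identification chain Lemma \ref{normalc} followed by Lemma \ref{mthc}. Your treatment of (ii) $\Rightarrow$ (i) is essentially the paper's argument, merely packaged as ``every subsequential normal limit of the dilates is the constant $L$'' rather than as a contradiction built around a putative bad sequence $(x_j,t_j^2)$. The one genuine divergence is in (i) $\Rightarrow$ (ii): you observe that the parabolic limit hypothesis already gives $u_r\to L$ uniformly on compact sets directly, since the dilates of a compact $K\subset \texttt{P}(0,\alpha)$ remain in $\texttt{P}(0,\alpha)$ with time coordinate tending to $0$ uniformly over $K$; this lets you feed $\Gamma(\mu_{r_k})\to\Gamma(Lm)$ straight into Lemma \ref{normalc}. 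The paper instead first establishes $M_{HL}(\mu)(0)<\infty$ to bound the ratios $L_j$ and to get local boundedness of $\{u_j\}$, extracts a normally convergent subsequence by Lemma \ref{montelc}, and only then identifies the limit as $L$ pointwise from the parabolic limit hypothesis. Your shortcut eliminates the maximal-function bound and the normal-families extraction in this direction at no cost; the maximal estimate remains indispensable in the converse direction, where no a priori convergence of $u_r$ is available and local boundedness must be manufactured from $D\mu(0)=L$ together with the finiteness of $\mu$.
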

\begin{proof}
We first prove that (i) implies (ii). We choose a $d_{\mathcal{L}}$-ball $B_0\subset G$, a sequence of positive numbers $\{r_j\}$ converging to zero and consider the quotient
\begin{equation*}
L_j=\frac{\mu\left(\delta_{r_j}(B_0)\right)}{m\left(\delta_{r_j}(B_0)\right)}.
\end{equation*}
Assuming (i), we will prove that $\{L_j\}$ is a bounded sequence and every convergent subsequence of $\{L_j\}$ converges to $L$. We first choose a positive real number $s$ such that $B_0$ is contained in the $d_{\mathcal{L}}$-ball $B(0,s)$. Then for all $j\in\N$,
\begin{equation}\label{lj}
L_j\leq \frac{\mu\left(\delta_{r_j}(B(0,s))\right)}{m\left(\delta_{r_j}(B_0)\right)}=\frac{\mu\left(\delta_{r_j}(B(0,s))\right)}{m\left(\delta_{r_j}(B(0,s))\right)}\times\frac{m(B(0,s))}{m(B_0)}\leq \frac{m(B(0,s))}{m(B_0)}M_{HL}(\mu)(0).
\end{equation}
Since $\mu$ is the boundary measure for $u$ we have that
\begin{equation*}
u(x,t)=\Gamma\mu (x,t),\:\:\:\:\:\text{for all $(x,t)\in G\times(0,\infty)$.}
\end{equation*}	
By hypothesis, $u(0,t^2)$ converges to $L$ as $t$ tends to zero which implies, in particular, that there exists a positive number $\beta$ such that
\begin{equation*}
\sup_{t<\beta}\Gamma\mu(0,t^2)<\infty.
\end{equation*}
Since $\mu$ is a finite measure, using (\ref{gaussian}) we also have
\begin{equation*}
\Gamma\mu(0,t^2)\leq c_0t^{-Q}\int_{G}\exp\left(-\frac{d_{\mathcal{L}}(x)^2}{c_0t^2}\right)\:d\mu(x)\leq c_0t^{-Q}\mu(G)\leq c_0\beta^{-Q}\mu(G),
\end{equation*}
for all $t\geq\beta$ and hence
\begin{equation*}
\sup_{0< t<\infty}\Gamma\mu(0,t^2)<\infty.
\end{equation*}
Inequality (\ref{heatmaxineq}), now implies that $M_{HL}(\mu)(0)$ is finite. Boundedness of the sequence $\{L_j\}$ is now a consequence of the inequality (\ref{lj}). We choose a convergent subsequence of $\{L_j\}$ and denote it also, for the sake of simplicity, by $\{L_j\}$. For $j\in\N$ we define
\begin{equation*}
u_j(x,t)=u\left(\delta_{r_j}(x),r_j^2t\right),\:\:\:\:(x,t)\in G\times(0,\infty).
\end{equation*}
Then by Remark \ref{dilatefc}, i), $\{u_j\}$ is a sequence of solutions of the heat equation in $G\times(0,\infty)$. We claim that $\{u_j\}$ is locally bounded. To see this, we choose a compact set  $K\subset G\times(0,\infty)$. Then there exists a positive number $\alpha$ such that $K$ is contained in the parabolic region $\texttt{P}(0,\alpha)$. Indeed, we consider the map
\begin{equation*}
(x,t)\mapsto\frac{d_{\mathcal{L}}(x)}{\sqrt{t}},\:\:\:(x,t)\in G\times(0,\infty),
\end{equation*}
that is, $K\subset\texttt{P}(0,\alpha)$. Since $d_{\mathcal{L}}$ is continuous on $G$, this map is also continuous. As $K$ is compact, image of $K$ under this map is bounded and hence there exists a positive real number $\alpha$ such that 
\begin{equation*}
\frac{d_{\mathcal{L}}(x)}{\sqrt{t}}<\alpha,\:\:\:\:\text{for all $(x,t)\in K$.}
\end{equation*}
Using the invariance of $\texttt{P}(0,\alpha)$ under the parabolic dilation (see Remark \ref{dilatefc}, ii)) and (\ref{heatmaxineq}), it follows that for all $j\in\N$
\begin{equation*}
\sup_{(x,t)\in \texttt{P}(0,\alpha)}u_j(x,t)\leq \sup_{(x,t)\in \texttt{P}(0,\alpha)}u(x,t)\leq c_{\alpha}M_{HL}(\mu)(0).
\end{equation*}
Hence, $\{u_j\}$ is locally bounded. Lemma \ref{montelc}, now guarantees the existence of a subsequence $\{u_{j_k}\}$ of $\{u_j\}$ which converges normally to a positive solution $v$ of the heat equation in $G\times(0,\infty)$. We claim that for all $(x,t)\in G\times(0,\infty)$
\begin{equation}\label{vlimitmain}
v(x,t)=L.
\end{equation}
To see this, we take  $(x_0,t_0)\in G\times(0,\infty)$ and choose $\eta>0$ such that $(x_0,t_0)\in\texttt{P}(0,\eta)$. Since $\{r_{j_k}\}$ converges to zero as $k$ goes to infinity and $u(x,t)$ has limit $L$, as $(x,t)$ tends to $(0,0)$ within $\texttt{P}(0,\eta)$,
\begin{equation*}
v(x_0,t_0)=\lim_{k\to\infty}u_{j_k}(x_0,t_0)=\lim_{k\to\infty}u\left(\delta_{r_{j_k}}(x_0),r_{j_k}^2t_0\right)=L,
\end{equation*}
as $(\delta_{r_{j_k}}(x),r_{j_k}^2t)\in \texttt{P}(0,\eta)$ for all $j_k\in\N$. This settles the claim. On the other hand, by Lemma \ref{dilatec}, we have for all $(x,t)\in G\times(0,\infty)$
\begin{equation}\label{udilatec}
u_{j_k}(x,t)=u\left(\delta_{r_{j_k}}(x),r_{j_k}^2t\right)=\Gamma\mu\left(\delta_{r_{j_k}}(x),r_{j_k}^2t\right)=\Gamma\left(\mu_{r_{j_k}}\right)(x,t).
\end{equation}
It follows from (\ref{vlimitmain}) and (\ref{udilatec}) that $\{\Gamma(\mu_{r_{j_k}})\}$ converges normally to $L\equiv\Gamma(Lm)$. Then, Lemma \ref{normalc} implies that the sequence of measures $\{\mu_{r_{j_k}}\}$ converges to $Lm$ in weak* and hence by Lemma \ref{mthc}, $\{\mu_{r_{j_k}}(B)\}$ converges to $Lm(B)$ for every  $d_{\mathcal{L}}$-ball $B\subset G$.
Using this for $B=B_0$, we get that
\begin{equation*}
Lm(B_0)=\lim_{k\to \infty}\mu_{r_{j_k}}(B_0)=\lim_{k\to\infty}{r_{j_k}}^{-Q}\mu\left(\delta_{{r_{j_k}}}(B_0)\right)=m(B_0)\lim_{k\to\infty}\frac{\mu\left(\delta_{{r_{j_k}}}(B_0)\right)}{m\left(\delta_{{r_{j_k}}}(B_0)\right)}.
\end{equation*}
This implies that the sequence $\{L_{j_{k}}\}$ converges to $L$ and hence so does $\{L_j\}$, as $\{L_j\}$ is convergent. Thus, every convergent subsequence of  the bounded sequence $\{L_j\}$ converges to $L$. This implies that $\{L_j\}$ itself converges to $L$. Since $B_0$ and $\{r_j\}$ is arbitrary, $\mu$ has strong derivative $L$ at $0$.

Now, we prove (ii) implies (i). We suppose that the strong derivative of $\mu$ at zero is equal to $L$ but the parabolic limit of $u$ at zero is not equal to $L$. Then there exists a positive number $\alpha$ and a sequence $\{(x_j,t_j^2)\}\subset \texttt{P}(0,\alpha)$ with $(x_j,t_j^2)$ converging to $(0,0)$ but $\{u(x_j,t_j^2)\}$ fails to converge to $L$. Since $D\mu(0)$ exists finitely (in fact, equal to $L$) it follows, in particular, that
\begin{equation*}
\sup_{0<r<\delta}\frac{\mu(B(0,r))}{m(B(0,r))}<L+1,
\end{equation*}
for some $\delta>0$. Finiteness of the measure $\mu$ implies that
\begin{equation*}
\frac{\mu(B(0,r))}{m(B(0,r))}\leq\frac{\mu(G)}{m(B(0,1))r^Q}\leq\frac{\mu(G)}{m(B(0,1))\delta^Q},
\end{equation*}
for all $r\geq\delta$. The above two inequalities together with (\ref{heatmaxineq}) shows that
\begin{equation*}
\sup_{(x,t)\in \texttt{P}(0,\alpha)}u(x,t)\leq c_{\alpha}M_{HL}(\mu)(0)<\infty.
\end{equation*}
In particular, $\{u(x_j,t_j^2)\}$ is a bounded sequence. We now consider a convergent subsequence of this sequence, denote it also, for the sake of simplicity, by $\{u(x_j,t_j^2)\}$ such that
\begin{equation}\label{limitLc}
\lim_{j\to\infty} u(x_j,t_j^2)=L'.
\end{equation}
We will prove that $L'$ is equal to $L$. Using the sequence $\{t_j\}$ we consider the dilates
\begin{equation*}
u_j(x,t)=u\left(\delta_{t_j}(x),t_j^2t\right),\:\:\:\:(x,t)\in G\times(0,\infty).
\end{equation*}
Arguments used in the first part of the proof shows that $\{u_j\}$ is a locally bounded sequence of nonnegative solutions of the heat equation in $G\times(0,\infty)$. Hence, by Lemma \ref{montelc}, there exists a subsequence $\{u_{j_k}\}$ of $\{u_j\}$ which converges normally to a positive solution $v$ of the heat equation in $G\times(0,\infty)$. Therefore, Lemma \ref{existence} shows that there exists $\nu\in M$ such that $v$ equals $\Gamma\nu$. We now consider the sequence of dilates $\{\mu_k\}$ of $\mu$ by $\{t_{j_k}\}$ according to (\ref{dilatemc}). An application of Lemma \ref{dilatec} then implies that $\Gamma\mu_k=u_{j_k}$. It follows that the sequence of functions $\{\Gamma\mu_k\}$ converges normally to $\Gamma\nu$. By Lemma \ref{normalc}, we thus obtain weak* convergence of $\{\mu_k\}$ to $\nu$.

Since $D\mu(0)=L$, it follows that for any $d_{\mathcal{L}}$-ball $B\subset G$,
\begin{equation*}
\lim_{k\to\infty}\mu_k(B)=\lim_{k\to\infty}{t_{j_k}}^{-Q}\mu(\delta_{{t_{j_k}}}(B))=\lim_{k\to\infty}\frac{\mu(\delta_{{t_{j_k}}}(B))}{m(\delta_{{t_{j_k}}}(B)}m(B)=Lm(B).
\end{equation*}
Hence by Lemma \ref{mthc}, $\nu=Lm$. As $v=\Gamma\nu$, it follows that
\begin{equation*}
v(x,t)=L,\:\:\:\:\text{for all $(x,t)\in G\times(0,\infty)$}.
\end{equation*}
This, in turn, implies that $\{u_{j_k}\}$ converges to the constant function $L$ normally in $G\times(0,\infty)$. On the other hand, we note that
\begin{equation*}
u(x_{j_k},t_{j_k}^2)=u\left(\delta_{t_{j_k}}\left(\delta_{{t^{-1}_{j_k}}}(x_{j_k})\right),t_{j_k}^2\right)=u_{j_{k}}\left(\delta_{{t^{-1}_{j_k}}}(x_{j_k}),1\right).
\end{equation*}
Since $(x_{j_k},t_{j_k}^2)$ belongs to the parabolic region $\texttt{P}(0,\alpha)$, for all $k\in\N$, it follows that
\begin{equation*}
\left(\delta_{{t^{-1}_{j_k}}}(x_{j_k}),1\right)\in\overline B(0,\alpha)\times\{1\},
\end{equation*}
which is a compact subset of $G\times(0,\infty)$. Therefore,
\begin{equation*}
\lim_{k\to\infty}u(x_{j_k},t_{j_k}^2)=L.
\end{equation*}
In view of (\ref{limitLc}) we can thus conclude that $L'$ equals $L$. So, every convergent subsequence of the original sequence $\{u(x_j,t_j^2)\}$ converges to $L$. This contradicts our assumption that $\{u(x_j,t_j^2)\}$ fails to converge to $L$. This completes the proof.
\end{proof}
Now, we are in a position to state and prove our main result.
\begin{thm}\label{mainc}
Suppose $u$ is a positive solution of the heat equation $\mathcal{H}u=0$ in $G\times(0,T)$, for some $0<T\leq\infty$ and suppose $x_0\in G$, $L\in[0,\infty)$. If $\mu$ is the boundary measure of $u$ then the following statements are equivalent.
\begin{enumerate}
\item[(i)] $u$ has parabolic limit $L$ at $x_0$.
\item[(ii)]$\mu$ has strong derivative $L$ at $x_0$.
\end{enumerate} 
\end{thm}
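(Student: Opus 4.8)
The plan is to reduce Theorem~\ref{mainc} to the special case Theorem~\ref{specialthc}, which already settles the situation of a \emph{finite} boundary measure with base point at the identity. Two reductions are needed: a left translation carrying $x_0$ to $0$, and a decomposition of $\mu$ into a finite piece supported near $0$ and a tail that is invisible to both conditions.

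First I would translate. Put $\tilde u(x,t)=u(x_0\circ x,t)$. Because $X_1,\dots,X_{N_1}$ are left-invariant, $\mathcal{L}$ and hence $\mathcal{H}$ commute with left translation, so $\tilde u$ is again a nonnegative solution on $G\times(0,T)$. Substituting $\xi=x_0\circ\eta$ in $u=\Gamma\mu$ and using $(x_0\circ\eta)^{-1}\circ x_0\circ x=\eta^{-1}\circ x$ together with the left-invariance of $m$, one checks that $\tilde u=\Gamma\tilde\mu$ with $\tilde\mu(E)=\mu(x_0\circ E)$. Since $d_{\mathcal{L}}(x_0,x)=d_{\mathcal{L}}(\eta)$ for $x=x_0\circ\eta$, the region $\texttt{P}(x_0,\alpha)$ corresponds to $\texttt{P}(0,\alpha)$, and as $m$ is left-invariant we have $\tilde\mu(\delta_r(B))/m(\delta_r(B))=\mu(x_0\circ\delta_r(B))/m(x_0\circ\delta_r(B))$. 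Hence both conditions for $(u,\mu)$ at $x_0$ are equivalent to the corresponding ones for $(\tilde u,\tilde\mu)$ at $0$, and I may assume $x_0=0$.

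Next I would localize. Fix $\rho>0$ and split $\mu=\mu_1+\mu_2$ with $\mu_1=\mu|_{B(0,\rho)}$ and $\mu_2=\mu|_{G\setminus B(0,\rho)}$. As $\overline{B(0,\rho)}$ is compact (Remark~\ref{compact}) and $\mu$ is locally finite, $\mu_1$ is a finite positive measure; thus $\mu_1\in M$ and $u_1:=\Gamma\mu_1$ is a nonnegative solution on all of $G\times(0,\infty)$ by Corollary~\ref{finiteonstrip}. With $u_2:=\Gamma\mu_2$ we have $u=u_1+u_2$ on $G\times(0,T)$. The tail $\mu_2$ carries no mass near $0$: for small $r$ the set $\delta_r(B)$ lies in $B(0,\rho)$, so $\mu_2(\delta_r(B))=0$ and $D\mu_2(0)=0$. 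Consequently $\mu$ has strong derivative $L$ at $0$ if and only if $\mu_1$ does.

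The one genuinely technical point, and the step I expect to be the main obstacle, is to show $u_2$ has parabolic limit $0$ at $0$; once this holds, $u$ has parabolic limit $L$ at $0$ iff $u_1$ does, and applying Theorem~\ref{specialthc} to the finite measure $\mu_1$ and its solution $u_1$ closes the chain of equivalences. Choose $t_0\in(0,T)$; since $u(0,t_0)=\Gamma\mu(0,t_0)<\infty$, the lower bound in (\ref{gaussian}) gives $I_0:=\int_G\exp\!\big(-c_0 d_{\mathcal{L}}(\xi^{-1})^2/t_0\big)\,d\mu(\xi)<\infty$. For $(x,t)\in\texttt{P}(0,\alpha)$ and $d_{\mathcal{L}}(\xi)\geq\rho$, the reverse triangle inequality (\ref{revtri}) gives $d_{\mathcal{L}}(\xi^{-1}\circ x)\geq C_{\mathcal{L}}^{-1}d_{\mathcal{L}}(\xi)-\alpha\sqrt t$, which for $t$ small is at least $(\sqrt2\,C_{\mathcal{L}})^{-1}d_{\mathcal{L}}(\xi)$. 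Inserting this into the upper bound of (\ref{gaussian}) and splitting the exponent, for $t<t_0/(2c_0^2C_{\mathcal{L}}^2)$ the coefficient $\kappa(t):=\tfrac{1}{2c_0C_{\mathcal{L}}^2 t}-\tfrac{c_0}{t_0}$ is positive, so $\Gamma(\xi^{-1}\circ x,t)\leq c_0 t^{-Q/2}e^{-\kappa(t)\rho^2}\exp\!\big(-c_0 d_{\mathcal{L}}(\xi^{-1})^2/t_0\big)$ uniformly in $\xi$. Integrating against $\mu_2$ yields $u_2(x,t)\leq c_0 t^{-Q/2}e^{-\kappa(t)\rho^2}I_0$, whose prefactor tends to $0$ as $t\to0$ since $\kappa(t)\to\infty$; the bound is uniform in $x$, so $u_2\to0$ along $\texttt{P}(0,\alpha)$. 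The delicate part throughout is keeping every estimate uniform in $\xi$ while exploiting only the single finiteness $\Gamma\mu(0,t_0)<\infty$, rather than any global integrability of $\mu$.
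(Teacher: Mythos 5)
Your proposal is correct and follows essentially the same route as the paper: translate the base point to the identity, restrict $\mu$ to a ball about $0$ to obtain a finite measure whose strong derivative at $0$ agrees with that of $\mu$, show via the Gaussian estimates and the single finiteness $\Gamma\mu(0,t_0)<\infty$ that the tail has parabolic limit $0$, and then invoke Theorem~\ref{specialthc}. The only differences are cosmetic (a general radius $\rho$ in place of the paper's $C_{\mathcal{L}}^{-1}$, and estimating directly on $\texttt{P}(0,\alpha)$ rather than on a small ball containing its lower portion).
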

\begin{proof}
We consider the translated measure $\mu_0=\tau_{x_0}\mu$, where
\begin{equation*}
\tau_{x_0}\mu (E)=\mu(x_0\circ E),
\end{equation*}
for all Borel subsets $E\subset G$. Using translation invariance of the Lebesgue measure $m$, it follows from the definition of strong derivative that $D\mu_0(0)$ and $D\mu
(x_0)$ are equal. Since $\Gamma\mu_0$ is given by the convolution of $\mu_0$ with $\gamma_{\sqrt{t}}$ and translation commutes with convolution, it follows that
\begin{equation}\label{trans}
\Gamma\mu_0(x,t)= (\gamma_{\sqrt{t}}\ast \tau_{x_0}\mu )(x)=\tau_{x_0}(\gamma_{\sqrt{t}}\ast\mu )(x)=\Gamma\mu(x_0\circ x,t).
\end{equation}
We fix an arbitrary positive number $\alpha$. As $(x,t)\in \texttt{P}(0,\alpha)$ if and only if $(x_0\circ x,t)\in \texttt{P}(x_0,\alpha)$, one infers from  (\ref{trans}) that
\begin{equation*}
\lim_{\substack{(x,t)\to(0,0)\\(x,t)\in \texttt{P}(0,\alpha)}}\Gamma\mu_0(x,t)=\lim_{\substack{(\xi,t)\to(x_0,0)\\(\xi,t)\in \texttt{P}(x_0,\alpha)}}\Gamma\mu(\xi,t).
\end{equation*}
Hence, it suffices to prove the theorem under the assumption that $x_0=0$. We now show that we can even take $\mu$ to be a finite measure. Let $\tilde{\mu}$ be the restriction of $\mu$ on the $d_{\mathcal{L}}$-ball $B(0,C_{\mathcal{L}}^{-1})$. Suppose $B(y,s)$ is any given $d_{\mathcal{L}}$-ball. Then for all $0<r<\left(C_{\mathcal{L}}^2(s+d_{\mathcal{L}}(y))\right)^{-1}$, it follows that whenever $\xi\in\delta_r(B(y,s))=B(\delta_r(y),rs)$, we have
\begin{equation*}
d_{\mathcal{L}}(0,\xi)\leq C_{\mathcal{L}}\left(d_{\mathcal{L}}(0,\delta_r(y))+d_{\mathcal{L}}(\delta_r(y),\xi)\right)\leq C_{\mathcal{L}}\left(rd_{\mathcal{L}}(y)+rs\right)<C_{\mathcal{L}}^{-1}.
\end{equation*}
In other words, $\delta_r(B(y,s))$ is a subset of $B(0,C_{\mathcal{L}}^{-1})$. This in turn implies that $D\mu(0)$ and $D\tilde{\mu}(0)$ are equal. We now claim that
\begin{equation}\label{finaleqc}
\lim_{\substack{(x,t)\to(0,0)\\(x,t)\in \texttt{P}(0,\alpha)}}\Gamma\mu(x,t)=\lim_{\substack{(x,t)\to(0,0)\\(x,t)\in \texttt{P}(0,\alpha)}}\Gamma\tilde{\mu}(x,t).
\end{equation}
In this regard, we first observe that
\begin{equation*}
\lim_{t\to 0}\int_{G\setminus B(0,C_{\mathcal{L}}^{-1})}\Gamma(\xi^{-1}\circ x,t)\:d\mu(\xi)=0,
\end{equation*}uniformly for $x\in B(0,1/(2C_{\mathcal{L}}^2))$. Indeed, for $x\in B(0,1/(2C_{\mathcal{L}}^2))$ and $\xi\in G\setminus B(0,C_{\mathcal{L}}^{-1})$, we have 
\begin{equation*}
d_{\mathcal{L}}(\xi^{-1}\circ x)\geq\frac{1}{C_{\mathcal{L}}}d_{\mathcal{L}}(\xi)-d_{\mathcal{L}}(x)\geq\frac{d_{\mathcal{L}}(\xi)}{C_{\mathcal{L}}}-\frac{d_{\mathcal{L}}(\xi)}{2C_{\mathcal{L}}}=\frac{d_{\mathcal{L}}(\xi)}{2C_{\mathcal{L}}}\geq\frac{1}{2C_{\mathcal{L}}^2}.
\end{equation*}
Using the Gaussian estimate (\ref{gaussian}) and the inequality above, we get
\begin{eqnarray*}
&&\int_{G\setminus B(0,C_{\mathcal{L}}^{-1})}\Gamma(\xi^{-1}\circ x,t)\:d\mu(\xi)\\
&\leq& c_0t^{-\frac{Q}{2}}\int_{G\setminus B(0,C_{\mathcal{L}}^{-1})}\exp\left(-\frac{d_{\mathcal{L}}(\xi^{-1}\circ x)^2}{c_0t}\right)\:d\mu(\xi)\\
&\leq&c_0t^{-\frac{Q}{2}}\int_{G\setminus B(0,C_{\mathcal{L}}^{-1})}\exp\left(-\frac{d_{\mathcal{L}}(\xi)^2}{4c_0C_{\mathcal{L}}^2t}\right)\:d\mu(\xi)\\
&\leq& c_0t^{-\frac{Q}{2}}\exp\left(-\frac{1}{8c_0C_{\mathcal{L}}^4t}\right)\int_{G\setminus B(0,C_{\mathcal{L}}^{-1})}\exp\left(-\frac{d_{\mathcal{L}}(\xi)^2}{8c_0C_{\mathcal{L}}^2t}\right)\:d\mu(\xi)\\
&\leq& c_0t^{-\frac{Q}{2}}\exp\left(-\frac{1}{8c_0C_{\mathcal{L}}^4t}\right)\int_{G\setminus B(0,C_{\mathcal{L}}^{-1})}\exp\left(-\frac{2c_0d_{\mathcal{L}}(\xi)^2}{t_0}\right)\:d\mu(\xi),
\end{eqnarray*}
for all $t<(16c_0^2C_{\mathcal{L}}^2)^{-1}t_0$, where $t_0$ is a fixed positive number less than $T$.  As $\Gamma\mu(0,t_0/2)$ exists, the Gaussian estimate (\ref{gaussian}) implies that the integral on the right-hand side in the last inequality is finite. Hence, letting $t$ goes to zero on the right-hand side in the last inequality, the observation follows. Now, 
\begin{eqnarray*}
\Gamma\mu(x,t)&=&\int_{B(0,C_{\mathcal{L}}^{-1})}\Gamma(\xi^{-1}\circ x,t)\:d\mu(\xi)+\int_{G\setminus B(0,C_{\mathcal{L}}^{-1})}\Gamma(\xi^{-1}\circ x,t)\:d\mu(\xi)\\
&=& \Gamma\tilde{\mu}(x,t)+ \int_{G\setminus B(0,C_{\mathcal{L}}^{-1})}\Gamma(\xi^{-1}\circ x,t)\:d\mu(\xi).
\end{eqnarray*} 
Given any $\epsilon>0$, we get some $t_1\in(0,(16c_0^2C_{\mathcal{L}}^2)^{-1}t_0)$ such that for all $t\in (0,t_1)$, the integral on the right-hand side of the equality above is smaller than $\epsilon$ for all $x\in B(0,1/(2C_{\mathcal{L}}^2))$. On the other hand, if we choose $t\in (0,1/(4\alpha^2C_{\mathcal{L}}^4))$ then it follows that
\begin{equation*}
\texttt{P}(0,\alpha)\cap \{(x,t)\mid t\in (0,1/(4\alpha^2C_{\mathcal{L}}^4)) \}\subset B(0,1/(2C_{\mathcal{L}}^2))\times (0,1/(4\alpha^2C_{\mathcal{L}}^4)).
\end{equation*}
Hence, for all $(x,t)\in \texttt{P}(0,\alpha)$ with $t\in(0,\min\{t_1,1/(4\alpha^2C_{\mathcal{L}}^4)\})$  we have
\begin{equation*}
|\Gamma\mu (x,t)-\Gamma\tilde{\mu}(x,t)|<\epsilon.
\end{equation*}
This proves (\ref{finaleqc}). Therefore, as $\alpha>0$ is arbitrary, we may and do suppose that $\mu$ is a finite measure. Using this, without loss of generality, we may also assume $T=\infty$. The proof now follows from Theorem \ref{specialthc}.
\end{proof}
\section*{Acknowledgements}
The author would like to thank Swagato K. Ray for many
useful discussions and suggestions during the course of this work. The author is supported by a research fellowship from Indian Statistical Institute.


\begin{thebibliography}{99}
\bibitem{B} Bar, Christian {\em Some properties of solutions to weakly hypoelliptic equations}. Int. J. Differ. Equ. (2013)
\bibitem{BLU} Bonfiglioli, A.; Lanconelli, E.; Uguzzoni, F. {\em Stratified Lie groups and potential theory for their sub-Laplacians}. Springer Monographs in Mathematics. Springer, Berlin (2007)
\bibitem{BLU1} Bonfiglioli, A.; Lanconelli, E.; Uguzzoni, F. {\em Uniform Gaussian estimates for the fundamental solutions for heat operators on Carnot groups}. Adv. Differential Equations 7, no. 10 (2002)
\bibitem{BU} Bonfiglioli, A.; Uguzzoni, F. {\em Representation formulas and Fatou-Kato theorems for heat operators on stratified groups}. Rend. Mat. Appl. (7) 25, no. 1 (2005)
\bibitem{BC} Brossard, Jean; Chevalier, Lucien {\em Problème de Fatou ponctuel et dérivabilité des mesures}. Acta Math. 164 (1990), no. 3-4, 237–263.
\bibitem{FR} Fischer, Veronique; Ruzhansky, Michael {\em Quantization on nilpotent Lie groups}. Progress in Mathematics, 314. Birkhäuser/Springer, [Cham], 2016.
\bibitem{F} Folland, G. B. {\em Subelliptic estimates and function spaces on nilpotent Lie groups}. Ark. Mat. 13, no. 2 (1975)
\bibitem{FS} Folland, G. B.; Stein, Elias M. {\em Hardy spaces on homogeneous groups}. Mathematical Notes, 28. Princeton University Press, Princeton, N.J (1982)
\bibitem{Garo} Garofalo, N.; Shen, Z. {\em Carleman estimates for a subelliptic operator and unique continuation}. Ann. Inst. Fourier (Grenoble) 44 (1994), no. 1, 129–166.
\bibitem{G} Gehring, F. W. {\em The boundary behavior and uniqueness of solutions of the heat equation}. Trans. Amer. Math. Soc. 94, 337–364 (1960)
\bibitem{H} Hörmander, L. {\em Hypoelliptic second order differential equations}.Acta Math. 119, 147–171 (1967)
\bibitem{Le} Le Donne, Enrico {\em A primer on Carnot groups: homogenous groups, Carnot-Carathéodory spaces, and regularity of their isometries}. Anal. Geom. Metr. Spaces 5, no. 1, 116–137 (2017)
\bibitem{L} Loomis, Lynn H. {\em The converse of the Fatou theorem for positive harmonic functions}. Trans. Amer. Math. Soc. 53, 239–250 (1943)
\bibitem{Pf} Pfeffer, Washek F. {\em The multidimensional fundamental theorem of calculus}. J. Austral. Math. Soc. Ser. A 43 (1987), no. 2, 143–170.
\bibitem{UR} Ramey, Wade; Ullrich, David {\em On the behavior of harmonic functions near a boundary point}. Trans. Amer. Math. Soc. 305, no. 1, 207–220 (1988)
\bibitem{Rub} Rudin, Walter {\em Real and complex analysis}. Third edition. McGraw-Hill Book Co., New York (1987)
\bibitem{Sar} Sarkar, J. {\em On parabolic convergence of positive solutions of the heat equation}, Complex Variables and Elliptic Equations (2021), \href{10.1080/17476933.2021.1882432}{10.1080/17476933.2021.1882432}
\bibitem{Sh} Shapiro, Victor L. {\em Poisson integrals and nontangential limits}. Proc. Amer. Math. Soc. 134 (2006), no. 11, 3181–3189.
\bibitem{W1} Watson, Neil A. {\em Introduction to heat potential theory}. Mathematical Surveys and Monographs, 182. American Mathematical Society, Providence, RI, (1987)
\end{thebibliography}
\end{document}